\let\hat\widehat
\let\tilde\widetilde
\renewcommand{\P}{\mbox{$\mathbb{P}$}}
\newcommand{\E}{\mbox{$\mathbb{E}$}}
\newcommand{\F}{\mbox{$\mathcal{F}$}}
\newcommand{\D}{\mbox{$\mathcal{D}$}}
\newcommand{\Tau}{\mathcal{T}}
\newcommand{\eff}{\mathrm{eff}}
\newcommand{\K}{k^*}
\newcommand{\dbtilde}[1]{\accentset{\approx}{#1}}
\DeclareMathOperator*{\argmax}{argmax}
\DeclareMathOperator*{\argmin}{argmin}
\newtheorem{theorem}{Theorem}[section]
\newtheorem{lemma}[theorem]{Lemma}
\newtheorem{claim}[theorem]{Claim}
\newtheorem{example}[theorem]{Example}
\newtheorem{corollary}[theorem]{Corollary}
\newtheorem{fact}[theorem]{Fact}
\newtheorem{proposition}[theorem]{Proposition}
\newtheorem{remark}[theorem]{Remark}
\title{On the bias, risk and consistency of sample means\\ in multi-armed bandits}
\author{%
	Jaehyeok Shin, Aaditya Ramdas and Alessandro Rinaldo \\
	Department of Statistics and Data Science \\
	Carnegie Mellon University \\
	\texttt{\{shinjaehyeok, aramdas, arinaldo\}@cmu.edu}
}
\begin{document}
	
	\maketitle
	
	\begin{abstract}%
	The sample mean is among the most well studied estimators in statistics, having many desirable properties such as unbiasedness and consistency. However, when analyzing data collected using a multi-armed bandit (MAB) experiment, the sample mean is biased and much remains to be understood about its properties. For example, when is it consistent, how large is its bias, and can we bound its mean squared error? This paper delivers a thorough and systematic treatment of the bias, risk and consistency of MAB sample means. Specifically, we identify four distinct sources of selection bias (sampling, stopping, choosing and rewinding) and analyze them both separately and together. We further demonstrate that a new notion of \emph{effective sample size} can be used to bound the risk of the sample mean under suitable loss functions. We present several carefully designed examples to provide intuition on the different sources of selection bias we study. Our treatment is nonparametric and algorithm-agnostic, meaning that it is not tied to a specific algorithm or goal. In a nutshell, our proofs combine variational representations of information-theoretic divergences with new martingale concentration inequalities. 
	\end{abstract}
	
	{\small	
  \textbf{\textit{Keywords---}} multi-armed bandits, sample mean, bias, risk bounds,  consistency}

\section{Introduction} 
In many real-world settings, data are collected in an adaptive manner from several distributions (arms), as captured by the classic stochastic multi-armed bandits (MAB) framework \cite{robbins1952some}. The data collection procedure (henceforth, algorithm) may have been primarily designed for purposes such as testing a hypothesis, minimizing regret or identifying the best arm. 
In each round, the algorithm draws a sample from one of the arms based on the previously observed data (\emph{adaptive sampling}). The algorithm may also be terminated based on a data-driven stopping rule rather than at a fixed time (\emph{adaptive stopping}). 
Even though mean estimation may not have been the primary objective,  the sample means of arms might nevertheless be calculated later on. 
For example, after identifying the best arm, it is natural to want an estimate of its mean reward.
In ``off-policy evaluation'' \cite{li2015toward}, mean reward estimates from a current policy are used to evaluate the performance of a different policy before actually implementing the latter. 
An analyst can choose a specific target arm based on the collected data (\emph{adaptive choosing}), for example focusing on certain ``promising'' arms. Furthermore, the analyst may wish to analyze the data at some past times, as if the experiment had stopped earlier (\emph{adaptive rewinding}).
 
 Among several possible mean estimators, we focus on the sample mean, which is arguably the simplest and most commonly used in practice.   In the classical nonadaptive setting, the sample mean has favorable properties. In particular, it is unbiased, consistent, and converges almost surely to the true mean, $\mu$. Additionally, under tail assumptions such as sub-Gaussian or sub-exponential conditions, the sample mean is tightly concentrated around $\mu$. Lastly, the sample mean has minimax optimal risk with respect to suitable loss functions such as the $\ell_2$ loss for distributions with a finite variance and the  Kullback-Leibler (KL) loss for distributions in a natural exponential family. 
 
The adaptive nature of the data acquisition process complicates significantly the analysis of the sample mean. In particular, it is well-known that the sample mean is biased under adaptive sampling and characterizing the bias has been a recent topic of interest. Significant progress has been made in characterizing the \emph{sign} of the bias. While estimating MAB ad revenues, \cite{xu2013estimation} gave an informal argument of why the sample mean is negatively biased for ``optimistic'' algorithms. Later, \cite{villar2015multi} encountered this negative bias in a simulation study motivated by using MAB for clinical trials. Recently, \cite{bowden2017unbiased} provided an expression for the bias, and \cite{nie2018adaptively} derived some sufficient conditions under which the bias is negative. In our recent work \cite{shin2019bias}, we extend both results, which hold only at a predetermined time and for a fixed arm, to more general adaptive setting that include adaptive sampling, stopping and choosing. There, we describe a simple monotonicity condition that determines the sign of the bias, including natural examples where it can be positively or negatively biased.
Despite this progress, it is still obscure how large the bias is, and more generally, how the sample mean estimator behaves around the true mean.
 
In this paper, we derive sufficient conditions under which the sample mean is consistent under all four aforementioned notions of adaptivity (sampling, stopping, choosing and rewinding, henceforth called the ``fully adaptive setting''). Then, we study the magnitude of its bias and risk under general  moment/tail conditions.  Adaptive mean estimation, in each of the four senses described above, has received significant attention in both recent and older literature (only studied one at a time, not together). Below, we briefly discuss how our work relates to these past works, proceeding one notion at a time in approximate historical order.

We begin by noting that a single-armed bandit is simply a random walk, a setting where adaptive stopping has been extensively studied, since even the simplest of asymptotic questions are often nontrivial. For example, if a random walk is stopped at an increasing sequence of stopping times, the corresponding sequence of stopped sample means does \emph{not} necessarily converge to $\mu$, even in probability, without regularity conditions on the distribution and stopping rules (see Ch.1 of \cite{gut2009stopped}). The book by \cite{gut2009stopped} on stopped random walks is an excellent reference, beginning from the seminal paper of \cite{wald1948optimum}, and summarizing decades of advances in sequential analysis. Some relevant authors include \cite{anscombe1952large,richter1965limit,starr1966asymptotic,starr1972further}, since they discuss inferential questions for stopped random walks or stopped tests, often in parametric and asymptotic settings. As far as we know, most of these results have not been extended to the MAB setting, which naturally involves adaptive sampling and choosing. Motivated by this, we provide new consistency results that hold in the fully adaptive setting.

For the problem of estimation following a sequential test, \cite{cox1952note} and \cite{siegmund1978estimation} developed an asymptotic expression for the size of the bias of the sample mean. Further, the moment bounds derived in \cite{de2004self, pena2008self} for self-normalized processes can be converted into bounds,  on the $\ell_p$-risk of the sample mean. However, both sets of results are only relevant to the case of a fixed arm (since they work in the one-armed setting),  a specific stopping rule (a sample mean crossing a boundary) and  a restricted class of arms (in our context, sub-Gaussian arms), and do not directly apply to adaptively chosen arms in MAB settings, unlike the results obtained in this paper. 

The recent literature on best-arm identification in MABs has often used anytime uniform concentration bounds for the sample mean of each arm around its true mean \cite{jamieson_lil_2014,kaufmann_complexity_2014} also known as finite-LIL bounds.  Historically, these were called confidence sequences~\cite{darling_confidence_1967,darling_further_1968,lai_confidence_1976}, though both theoretical and practical advances outside the MAB literature have been made recently \cite{balsubramani_sharp_2014,balsubramani_sequential_2016,howard2021time}. While these results yield high probability deviation inequalities that allow for adaptive rewinding, they cannot be immediately converted into bias and risk bounds. Below, we develop variants of these bounds and incorporate them into our risk analysis to to cover all the cases in the fully adaptive setting.

Recently \cite{russo2016controlling}  derived information-theoretic bounds for the selection bias introduced by adaptive choosing. This work, further extended by
\cite{jiao2017dependence}, showed that if a fixed number of samples is collected from each distribution, then the bias (or expected $\ell_2$ loss) of the sample mean of an adaptively chosen arm can be bounded using the mutual information between the arm index and the observed data. From our MAB perspective, these bounds only hold for a deterministic sampling strategy that is stopped at a fixed time. Our paper derives new bias and risk bounds based on the mutual information for the fully adaptive setting.

In sum, characterizing the risk and bias under all four notions of adaptivity simultaneously is an interesting and challenging  problem. Below, we summarize our contributions and describe the organization of the paper:
\begin{enumerate}
	\item We formulate sufficient conditions for consistency of a sequence of sample means in the fully adaptive setting which only require the existence of a finite mean for each arm (Proposition~\ref{prop::consistency}). 
	
	\item For the $\ell_2$ loss and for arms with finite moments, we derive risk bounds for the sample mean in the fully adaptive setting that includes an adaptive arm choice and adaptive rewinding (Theorem~\ref{thm::bounds_in_fully_adap_finite_moment} and Corollary~\ref{cor::bounds_in_finite_moment}). 
		\item 
By considering certain Bregman divergences between the sample and true mean as loss functions and for arms with exponentially decaying tails, we derive sharp risk bounds for a fixed target at a stopping time (Theorem~\ref{thm::bound_on_Bregman}) which are in turn used to derive quantitative upper and lower bounds for the bias under  adaptive sampling and stopping (Corollary~\ref{cor::bound_on_bias_l1_risk}).


		\item Under the fully adaptive setting including adaptive arm choice and adaptive rewinding, we show that by inducing a small “adaptive normalizing factor” in a log-log scale, we can extend the above results to derive bounds on the normalized risk of the sample mean to the fully adaptive setting (Theorem~\ref{thm::bounds_in_fully_adap} and Corollary~\ref{cor::bounds_in_fully_adap}).  

\end{enumerate}

The rest of the paper is organized as follows. In Section~\ref{sec::acda_setting}, we briefly formalize the four notions of adaptivity in the stochastic MAB framework. In Section~\ref{sec::consistency}, we provide a sufficient condition for the consistency of the sample mean. Section~\ref{sec::risk_under_finite_moment} provides a bound for the $\ell_2$ risk and bias of the sample mean for arms with finite moments and Section~\ref{sec:risk_under_sub_psi} extends the previous result to arms with exponentially decaying tails by introducing the Bregman divergence as a loss function. In Section~\ref{sec::proof_sketch}, we present proof techniques for main theorems. We end with a brief discussion in Section~\ref{sec::disc}, and for reasons of space, we defer all proofs to the Appendix. 

\section{The stochastic MAB framework} \label{sec::acda_setting}
For a fixed integer $\K \geq 1$, let  $P_1, \dots, P_{\K}$ be $\K$ distributions of interest (also called arms)  with finite means $\mu_k =  \E_{Y\sim P_k}[Y]$. Throughout this paper, every (in)equality between random variables is meant in the almost sure sense. 
\subsection{The data collection process}
The data are collected according to the standard MAB protocol, described as follows. See, e.g., \cite{lattimore2018bandit} for a good reference on MABs.
\begin{itemize}
    \item Let $W$ be a random variable capturing all external sources of randomness that are independent of everything else. Set $t=1$.
	\item At time $t$, let $\D_{t-1}$ be the data we have so far which is given by
	\[
	 \D_{t-1}  :=\{W,A_1, Y_1, \dots, A_{t-1}, Y_{t-1}\},
	\]
	 where $A_s$ is the (random) index of the arm sampled at time $s$ and $Y_s$ is the observation from the arm $A_s$. Based on the previous data (and possibly the  external  randomness $W$), let  
	$\nu_t(k \mid \D_{t-1})  \in [0,1]$ be the conditional probability of sampling the $k$-th arm for all $k \in [\K] := \{1,\ldots,\K\}$ with $\sum_{k=1}^{\K}\nu_t(k \mid \D_{t-1}) =1$.
	 Different choices for $\nu_t$ capture commonly used methods, such as random allocation, $\epsilon$-greedy, upper confidence bound algorithms and Thompson sampling. 
	\item
 	Sample $A_t$ from a multinomial distribution with probabilities $\{\nu_t(k \mid \D_{t-1})\}_{k=1}^{\K}$.  Let $Y_t$ be an independent draw from $P_{A_t}$. This yields a natural filtration $\left\{ \mathcal{F}_t \right\}_{t \geq 0}$ which is defined, starting with $\mathcal{F}_0 = \sigma\left(W, A_1\right)$, as
\[
\mathcal{F}_t := \sigma\left(W, A_1, Y_1,A_2, \dots,Y_t, A_{t+1}  \right),~~\forall t \geq 1.
\]
Here, $\mathcal{F}_t$ corresponds to ``the information known before observing $Y_{t+1}$''.
Then, $\{Y_t\}$ is adapted to $\left\{ \mathcal{F}_t \right\}$, and $\{A_t\}, \{\nu_t\}$ are predictable with respect to $\left\{ \mathcal{F}_t \right\}$, meaning that they are adapted to $\left\{ \mathcal{F}_{t-1} \right\}$.


\item For each $k\in[\K]$ and $t\geq 1$, define the running sum  and number of draws for arm $k$ as
$S_k(t) := \sum_{s=1}^{t} \mathbbm{1}(A_s = k) Y_s, ~~ N_k(t) := \sum_{s=1}^{t} \mathbbm{1}(A_s = k).$ Assuming that arm $k$ is sampled at least once, we define the  sample mean for arm $k$ as
\[
\hat{\mu}_k(t) := \frac{S_k(t)}{N_k(t)}.
\]
Then, $\{S_k(t)\}$, $\{\hat{\mu}_k(t)\}$ are adapted to $\{\F_t\}$ while $\{N_k(t)\}$ is predictable. 

\item Let $\Tau \geq 1$ be a stopping time with respect to $\{\F_t\}$, meaning that the event $\{t < \Tau\}$ is measurable with respect to the filtration $\F_t$. 
	If $t < \Tau$, then increment $t$. Else return the collected data \[\mathcal{D}_\Tau = \{W,A_1, Y_1, \dots, A_\Tau, Y_\Tau\}.
	\]
	We denote with $\F_\Tau$ the stopping time $\sigma$-field associated with $\Tau$ and the filtration  $\left\{ \mathcal{F}_t \right\}$, corresponding to "the information known at time $\Tau$" (see, e.g., \cite[page 367]{durrett2019probability}). In particular, $\mathcal{D}_\Tau$ is $\F_{\Tau}$-measurable.
\end{itemize}

With a slight abuse of notation, we denote with $\nu =\{\nu_t\}$ and $\Tau$ the sampling algorithm and the stopping rule, respectively. 
If, for each $t$, the sampling algorithm $\nu_t$ (or stopping rule $\Tau$) is independent of the arm realization observed so far, namely  $Y_1,\dots, Y_{t-1}$ (but not necessarily of the sampling history $A_1,\dots, A_{t-1}$), we call it a \emph{nonadaptive} sampling algorithm (or stopping rule). In particular, we denote with $T$ a random nonadaptive stopping rule (e.g., $T = 1 + Z$, where $Z$ is a Poisson random variable independent of everything else) and with $t^*$ a deterministic stopping time. Accordingly, we say that the data are  collected {\it nonadaptively} when a nonadaptive sampling algorithm and a nonadaptive stopping rule are deployed.
\begin{remark}
Throughout this paper, we use $t$ to denote the ``global'' time index, shared by all arms.  The data collection process can also be represented in terms of  ``local times" $t \mapsto \{ N_k(t), k \in [k^*]\}$, one for each arm, where $N_k(t)$ records the number of samples from arm $k$ collected by (global) time~$t$. 
Since $A_t$ is predictable with respect to $\F_t$, the optional skipping theorem for i.i.d.\ random variables  \citep[Theorem 5.2, page 145]{doob:90} implies that observations from each arm are i.i.d.\ samples under the local time indexing \cite[also see][Section 4.1]{garivier2018kl}. Consequently, the sample mean of each arm at a fixed \emph{local} time is unbiased, and all existing theories of consistency and risk of sample means can be directly applied, but this does not apply even at fixed \emph{global} times (since the corresponding local time at each arm is then random). In many bandit algorithms and in settings considered here, the algorithm is stopped at a data-dependent global time, and --- like for fixed global times --- the optional skipping argument does not apply; the stopping rule of the data collection process depends on the history of all the arms and on a global fixed horizon at which the sample mean is biased in general. 
\end{remark}

\subsection{Target for inference}
The selection of the arm whose mean is the target for estimation is made as follows:

\begin{itemize}
    \item Based on the collected data, the analyst chooses a data-dependent arm index based on a possibly randomized rule $\kappa:\mathcal{D}_\Tau~\mapsto~[\K]$. We denote the (random) index $\kappa(\mathcal{D}_\Tau)$ as just $\kappa$ for short, so that the selected mean is $\mu_\kappa$, a random parameter.  Note that $\kappa$ is $\F_{\Tau}$-measurable, but when $\kappa$ is nonadaptively chosen (i.e. it is independent of $\F_{\Tau}$) we denote it as $K$ when it is a random variable and simply as $k$ when it corresponds to a fixed arm. 
	\item Optionally, we may adaptively rewind the clock to focus on a previous random time $\tau \leq \Tau$ to characterize the past behavior of a chosen sample mean $\hat{\mu}_\kappa(\tau)$. The rewound time $\tau$ is assumed to be measurable with respect to $\F_{\Tau}$; in particular, it is {\it not} a stopping time. For instance, $\tau = \argmax_{t \leq \Tau}\hat{\mu}_{\kappa}(t)$ is a rewound time. We may care about the bias of the sample mean at this ``extreme'' time $\tau$. If we do not rewind, then $\tau=\Tau$.
\end{itemize}





The phrase ``fully adaptive setting'' refers to the scenario of running an adaptive sampling algorithm until an adaptive stopping time $\Tau$, and asking about the sample mean of an adaptively chosen arm $\kappa$ at an adaptively rewound time $\tau$. When we are not in the fully adaptive setting, we explicitly mention what aspects are adaptive. Table \ref{tab::notation} below summarizes the various types of notation for the selected arm and the chosen time.

\begin{table}[h]
\centering
\setlength\tabcolsep{1mm}
\captionsetup{labelfont=bf}
\def\arraystretch{2}
\caption{Summary of notations for time and arm indices.}
\label{tab::notation}
\begin{tabular}{cccc}
\hline
Index & Deterministic & Random but nonadaptive & Adaptive \\ \hline\hline
Time & $t, t^*$ & $T$ & $\Tau$ (stopping time), $\tau$\\ \hline
Arm & $k, \K$ & $K$    & $\kappa$ \\ \hline
\end{tabular}
\end{table}

\section{Consistency of the sample mean}\label{sec::consistency}

In sequential data analysis we often estimate the mean not just once but many times as new data become available. Let $\tau_1 \leq \tau_2 \leq \cdots$ be a sequence of non-decreasing random times, and thus $N_k(\tau_1) \leq N_k(\tau_2) \leq \cdots$. A natural question is to identify conditions under which the sample mean $\hat{\mu}_{\kappa_t}(\tau_t)$ is consistent,  in the sense that the sequence $\hat{\mu}_{\kappa_t}(\tau_t) - \mu_{\kappa_t}$ converges to zero, almost surely or in probability, as $t \rightarrow \infty$.

It is well known that the condition $\mathbb{E}\left[N_{k}(\tau_t)\right] \to \infty~~\text{as}~~t \to \infty$ is not sufficient to guarantee consistency of the sample mean even for a fixed target arm $k$, as demonstrated in the next example. 
\begin{example} \label{eg::counter_eg-consistency}
	Let $P_1$ and $P_2$ be standard normal distributions. Set $\nu_1 (1) = 1$, that is, the algorithm always picks the first distribution at $t=1$. For $t \geq 2$, set  $A_t = \mathbbm{1}(|Y_1| > z_{\alpha/2})  + 1$ where $z_\alpha$ is the $\alpha$-upper quantile of the standard normal which means that we pick a single (random) arm forever based on the first observation $Y_1$. Finally, let $t^* \geq 2$ be a deterministic stopping time. Then,  we have
	\[
	\mathbb{E}N_1(t^*) = 1 + (t^*-1)(1-\alpha)\longrightarrow \infty ~~\text{as}~~t^* \longrightarrow \infty.
	\]
	Note however that 
	\begin{align*}
		\mathbb{P} \left( \hat{\mu}_1(t^*)  > z_{\alpha/2}   \right) \geq \  \mathbb{P}\left(|Y_1| > z_{\alpha/2}  \right) = \alpha,~~ \forall t^* \geq 2.
	\end{align*} 
	Therefore $\mathbb{P} \left( \hat{\mu}_1(t^*)  > z_{\alpha/2}   \right)$ does not converge to zero even if $t^*$ and $\mathbb{E}N_1(t^*)$ approach infinity, and hence $\hat{\mu}_1(t^*)$ does not converge to the true mean $\mu_1=0$ in probability. 
\end{example}

For each fixed $k\in [\K]$, Theorem 2.1  in \cite{gut2009stopped} immediately yields that
\begin{equation} \label{eq::consistency_fixed_k_main}
\text{if}~~ N_k(\tau_t) \overset{a.s. }{\to} \infty~~\text{as}~~t \to \infty,~~\text{then} ~~\hat{\mu}_k(\tau_t) \overset{a.s.}{\to} \mu_k~~\text{as}~~t \to \infty. 
\end{equation}
Theorem 2.2 in \cite{gut2009stopped}  further implies that, in the previous display~\eqref{eq::consistency_fixed_k_main}, we can replace almost sure convergence  with the convergence in probability in both the condition and conclusion (See Appendix~\ref{Appen::prop::consistency} for formal descriptions).
In our next result, we generalize these claims to the multi-armed setting with an adaptively chosen arm. Note that here we only need the underlying distributions to have finite first moments.  
\begin{proposition} \label{prop::consistency}
	The following statements hold for any sequence of choice functions 
	
	$\kappa_t:\D(\tau_t)\to[\K]$ that are based on data up to time $\tau_t$:
	\begin{align} 
	&\text{if}~~ N_{\kappa_t}(\tau_t) \overset{a.s. }{\to} \infty~~\text{as}~~t \to \infty,~~\text{then}~~\hat{\mu}_{\kappa_t}(\tau_t) - \mu_{\kappa_t} \overset{a.s.}{\to} 0~~\text{as}~~t \to \infty, \text{ and} \label{eq::consistency_chosen_k_as}\\
	&\text{if}~~ N_{\kappa_t}(\tau_t) \overset{p}{\to} \infty~~\text{as}~~t \to \infty,~~\text{then}~~\hat{\mu}_{\kappa_t}(\tau_t)  - \mu_{\kappa_t}\overset{p}{\to} 0~~\text{as}~~t \to \infty.  \label{eq::consistency_chosen_k_in_p}
	\end{align}
\end{proposition}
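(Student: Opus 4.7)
The plan is to reduce the adaptive statement to the classical strong law of large numbers (SLLN) applied to each of the finitely many arms, exploiting that $\K$ is finite together with a pigeonhole argument to handle the random choice $\kappa_t$. The fixed-arm result \eqref{eq::consistency_fixed_k_main} cannot be invoked directly because $\kappa_t$ is itself adaptive and may visit different arms along different subsequences, so some finite-union device is needed.

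First, for each $k\in[\K]$, I would relabel the observations from arm $k$ in the order they appear in the MAB protocol as $Y^{(k)}_1, Y^{(k)}_2, \ldots$. By the construction in Section~\ref{sec::acda_setting}, conditional on $A_s = k$ the draw $Y_s$ is an independent sample from $P_k$, so the relabeled sequence is i.i.d.\ from $P_k$. With $\bar Y^{(k)}_n := n^{-1}\sum_{i=1}^n Y^{(k)}_i$, we have $\hat{\mu}_k(t) = \bar Y^{(k)}_{N_k(t)}$ whenever $N_k(t) \geq 1$. The classical SLLN gives $\bar Y^{(k)}_n \to \mu_k$ a.s., so $\Omega^* := \bigcap_{k=1}^{\K} \bigl\{\bar Y^{(k)}_n \to \mu_k\bigr\}$ has probability one.

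For \eqref{eq::consistency_chosen_k_as}, I would argue on the probability-one event $\Omega^* \cap \{N_{\kappa_t}(\tau_t) \to \infty\}$. If $\hat{\mu}_{\kappa_t}(\tau_t) - \mu_{\kappa_t}$ failed to converge to $0$ at some $\omega$ in this event, there would exist $\epsilon > 0$ and a subsequence $\{t_j\}$ with $|\hat{\mu}_{\kappa_{t_j}}(\tau_{t_j})(\omega) - \mu_{\kappa_{t_j}(\omega)}| > \epsilon$. Since $\kappa_{t_j}(\omega) \in [\K]$ and $\K$ is finite, the pigeonhole principle yields a further subsequence $\{t_{j_\ell}\}$ on which $\kappa$ is constantly equal to some $k^\star \in [\K]$. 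Along this subsequence $N_{k^\star}(\tau_{t_{j_\ell}})(\omega) = N_{\kappa_{t_{j_\ell}}}(\tau_{t_{j_\ell}})(\omega) \to \infty$, so because $\omega\in\Omega^*$ we get $\hat{\mu}_{k^\star}(\tau_{t_{j_\ell}})(\omega) = \bar Y^{(k^\star)}_{N_{k^\star}(\tau_{t_{j_\ell}})(\omega)}(\omega) \to \mu_{k^\star}$, contradicting the $\epsilon$-separation.

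For \eqref{eq::consistency_chosen_k_in_p}, I would use a two-term split: for any $\epsilon>0$ and $M\geq 1$,
\begin{align*}
\P\bigl(|\hat{\mu}_{\kappa_t}(\tau_t) - \mu_{\kappa_t}| > \epsilon\bigr)
&\leq \sum_{k=1}^{\K} \P\bigl(|\hat{\mu}_k(\tau_t) - \mu_k| > \epsilon,\, \kappa_t = k,\, N_k(\tau_t) > M\bigr) + \P\bigl(N_{\kappa_t}(\tau_t) \leq M\bigr) \\
&\leq \sum_{k=1}^{\K} \P\Bigl(\sup_{n > M} |\bar Y^{(k)}_n - \mu_k| > \epsilon\Bigr) + \P\bigl(N_{\kappa_t}(\tau_t) \leq M\bigr),
\end{align*}
where the first inequality uses $\{N_{\kappa_t}(\tau_t) \leq M\} = \bigcup_k \{\kappa_t = k,\, N_k(\tau_t) \leq M\}$. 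Since $\sup_{n > M} |\bar Y^{(k)}_n - \mu_k| \to 0$ a.s.\ as $M \to \infty$ by the SLLN, the first sum can be made arbitrarily small by fixing $M$ large; for that fixed $M$, the hypothesis $N_{\kappa_t}(\tau_t) \overset{p}{\to} \infty$ then drives the second term to $0$ as $t\to\infty$. The main obstacle in the whole argument is the pigeonhole/subsequence step in the a.s.\ case, which is needed precisely because the adaptive index $\kappa_t$ prevents a direct appeal to the fixed-arm version; finiteness of $\K$ is what makes this step go through without any uniform SLLN across arms.
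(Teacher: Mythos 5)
Your proposal is correct and follows essentially the same route as the paper: reduce to the finitely many fixed arms (the paper invokes Gut's Theorem 2.1 where you unroll it into the SLLN on the relabeled i.i.d.\ subsequence of each arm), intersect the resulting probability-one events, and handle the adaptive index $\kappa_t$ by the decomposition over $\{\kappa_t=k\}$ — your pigeonhole subsequence step is the contrapositive form of the paper's argument that $\left|\hat{\mu}_{\kappa_t}(\tau_t)-\mu_{\kappa_t}\right|\not\to 0$ forces some arm to be chosen infinitely often while its count stays bounded. The only cosmetic difference is that you prove the in-probability statement directly via a truncation at level $M$ and the a.s.\ control of $\sup_{n>M}|\bar Y^{(k)}_n-\mu_k|$, whereas the paper deduces it from the a.s.\ version by the standard subsequence argument; both are valid.
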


\noindent The proof of the proposition is deferred to  Appendix~\ref{Appen::prop::consistency}.  

Since $\left|\hat{\mu}_{\kappa_t}(\tau_t)  - \mu_{\kappa_t} \right| = \sum_{k =1}^{\K} \mathbbm{1}\left(\kappa_t= k\right)\left|\hat{\mu}_{k}(\tau_t)  - \mu_{k}\right|$, if we  instead assume the stronger condition that $N_k(\tau_t)\rightarrow\infty$ for all $k \in [\K]$, almost surely or in probability, Theorem 2.1 and 2.2 in \cite{gut2009stopped} immediately imply consistency of the sample means. However, the following example demonstrates that even if the number of draws for each \emph{fixed} arm does not converge to infinity, Proposition~\ref{prop::consistency} can guarantee the consistency of the chosen sample mean which, in contrast, cannot be directly implied by Theorem 2.1 and 2.2 in \cite{gut2009stopped}.

\begin{example}\label{eg::consistency-full}
	Let $P_1$ and $P_2$ be two identical continuous distributions with finite means. Set $\nu_1 (1) = 1$ and $\nu_2(2) = 1$, meaning that we begin by sampling each arm once. For all times $t \geq 3$, we set  $A_t = \mathbbm{1}(Y_1 >Y_2)  + 1$, meaning that we pick a single (random) arm forever. Finally, let $t^* \geq 3$ be a deterministic stopping time. The number of draws from each arm does not diverge to infinity either almost surely or in probability as $t^* \to \infty$ since for $k=1,2$, we have
	\[
	\mathbb{P}\left(N_k(t^*) \leq 1\right) = \frac{1}{2},~~\forall t^* \geq 3.
	\]	
	Now, let $\kappa = \mathbbm{1}\left(N_1(t^*)\leq N_2(t^*)\right)  + 1$, that is, we choose the arm with more data when we stop. Then, the number of draws from the chosen arm is always equal to $t^*-1$ and the sufficient condition in Proposition~\ref{prop::consistency} is trivially satisfied. Thus, even though $N_k(t^*)$ does not diverge to $\infty$ (almost surely or in probability) for any fixed $k$, our proposition still guarantees that $\hat{\mu}_{\kappa}(t^*) - \mu_{\kappa} \overset{a.s.}{\longrightarrow} 0 \text{ as}~~t^* \to \infty.$
\end{example}
The above example demonstrates the additional subtlety in the conditions for consistency when moving from a fixed arm to an adaptively chosen one.

\section{Risk of sample mean under arms with finite moments} \label{sec::risk_under_finite_moment}
Having established that the sample means are consistent estimators of the true means of the arms, in the subsequent sections we will turn to the more challenging tasks of deriving finite sample bounds on the magnitude of both the bias and the risk under different nonparametric assumptions on the arms. 
Specifically, we will be concerned with two notions of $\ell_2$ risk for the sample mean estimator: the classic or {\it unnormalized} one, corresponding to the squared error loss and taking the form
\begin{equation}
\text{[Unnormalized $\ell_2$ risk]} \quad \mathbb{E} \left[ \left(\hat{\mu}_{\kappa}(\tau) - \mu_{\kappa}\right)^2 \right],
\end{equation}
and a weighted or {\it normalized} variant, defined as  
\begin{equation}
\text{[Normalized $\ell_2$ risk]} \quad \mathbb{E}\left[ N_{\kappa}(\tau) \left(\hat{\mu}_{\kappa}(\tau) - \mu_{\kappa}\right)^2 \right].
\end{equation}
As we will see shortly, the unnormalized risk is a function of both the sampling algorithm and the stopping rule, while the normalized risk is upper bounded by a term that only depends on the choosing rule. The two types of risk bounds are rather different in both their form and interpretability, and each elucidate complementary aspects of the problems. In addition to normalized and unnormalized $\ell_2$ bounds, we will also give analogous $\ell_1$ bounds.

For each $p \geq 1$ and $k \in [\K]$, we define the centered $p$-norm of arm $k$ as 
\begin{equation}
    \sigma_k^{(p)} : = \left(\int \left|x -\mu_k\right|^{p} \mathrm{d}P_k(x)\right)^{1/p}.
\end{equation}
From Jensen's inequality, we know that if $p_1 \leq p_2$ then $\sigma_k^{(p_1)} \leq \sigma_k^{(p_2)}$ for each $k \in [\K]$. With a slight abuse of notation, below we will denote the standard deviation of the $k$-th arm with $\sigma_k$ instead of $\sigma_k^{(2)}$.  



\subsection{Unnormalized and normalized \texorpdfstring{$\ell_2$}{l2} risks under nonadaptive sampling and stopping}

Recall that if the sampling algorithm (or stopping rule) is independent of the realizations of the arms, we call it a nonadaptive sampling algorithm (or stopping rule, respectively).  
Under nonadaptive sampling and stopping, the unnormalized $\ell_2$-risk of the sample mean for arm $k$ is given by
\begin{equation}
\begin{aligned}
\mathbb{E}\left(\hat{\mu}_k(T) - \mu_k\right)^2
 &= \mathbb{E}\left[\mathbb{E} \left[\left(\hat{\mu}_k(T) - \mu\right)^2 \mid \{A_t\}_{t=1}^T\right]\right]\\
&= \mathbb{E} \left[\frac{\sigma_k^2}{N_k(T)}\right] ,
\end{aligned}
\end{equation}
where the second equality comes from the independence assumption on the sampling algorithm and stopping rule and the fact that $\mathbb{E} \left(\hat{\mu}_k(n) -\mu_k\right)^2 = \frac{\sigma_k^2}{n}$ where $\hat{\mu}_k(n)$ is a sample average of $n$ i.i.d.\ observations from a distribution with mean $\mu_k$ and variance $\sigma_k^2$.
Next, define the effective sample size for arm $k$ as
\begin{equation}
 n_k^\eff:= \left[\mathbb{E}\left(1/ N_k(T)\right)\right]^{-1}.  
\end{equation}
Then, under nonadaptive sampling and stopping, the $\ell_2$ risk of the sample mean for arm $k$ can immediately be seen to be equal to 
\[
\mathbb{E}\left(\hat{\mu}_k(T) - \mu_k\right)^2  = \frac{\sigma_k^2}{n_k^\eff}.
\]
Clearly, the effective sample size $n_k^\eff$ depends on both the nonadaptive sampling algorithm and the stopping rule, as it quantifies the combined effects of these rules on the $\ell_2$ risk of $\hat{\mu}_k(T)$. In contrast, the normalized risk is agnostic to the choices of such rules. In detail, we show next that the minimax normalized $\ell_2$ risk for estimating the mean of the $k$th arm over all nonadaptive data collection procedures is $\sigma^2_k$, and this risk value is achieved by the sample mean.  
 

\begin{proposition} 
\label{prop::minimax_nonadaptive}
For any fixed $k \in [\K]$, let $\mathbb{P}_k(\mu_k,\sigma_k)$ be the class of distributions on an arm $k$ with mean $\mu_k$ and variance $\sigma_k^2$. Let $\mathbb{V}$ and $\mathbb{T}$ be classes of nonadaptive sampling algorithms and stopping rules satisfying $N_k(T) \geq 1$. Finally, let $Q = Q(P_k, \nu, T)$ be the induced distribution on the observations from the arm $k$ with  $P_k$ the distribution under a nonadaptive sampling algorithm $\nu$ and stopping $T$.  Then, the minimax normalized $\ell_2$ risk is given by
	\begin{equation} \label{eq::minimax_nonadaptive}
	    \inf_{\tilde{\mu}_k}\sup_{\substack{\mu_k \in \mathbb{R}\\P_k \in \mathbb{P}_k(\mu_k,\sigma_k) \\ \nu \in \mathbb{V}, T \in \mathbb{T}}}\mathbb{E}_{Q}\left[N_k(T)\left(\tilde{\mu}_k(T) - \mu_k\right)^2\right] = \sigma_k^2,
	\end{equation}
	where the infimum is over all estimators. Furthermore, for any given $P_k \in \mathbb{P}_k(\mu_, \sigma_k^2)$, $\nu \in \mathbb{V}$ and $T \in \mathbb{T}$, the sample mean estimator achieves the minimax risk.
\end{proposition}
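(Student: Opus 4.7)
The plan is to prove the two sides of the minimax identity separately: the upper bound by direct computation for the sample mean, and the matching lower bound by reduction to a classical i.i.d.\ mean-estimation problem.

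For the upper bound, I would fix any $P_k \in \mathbb{P}_k(\mu_k,\sigma_k)$, nonadaptive $\nu \in \mathbb{V}$ and nonadaptive $T \in \mathbb{T}$, and take $\hat{\mu}_k$ to be the sample mean. Because $\nu$ and $T$ are both independent of the rewards, the $\sigma$-field $\mathcal{G} := \sigma(\{A_s\}_{s\geq 1}, T)$ is independent of the rewards $\{Y_s : A_s=k\}$ beyond the information encoded in the sample size $N_k(T)$, and on the event $\{N_k(T)=n\}$ with $n \geq 1$ those rewards form $n$ i.i.d.\ draws from $P_k$. Conditioning on $\mathcal{G}$ therefore gives
\[
\mathbb{E}_{Q}\!\left[\bigl(\hat{\mu}_k(T)-\mu_k\bigr)^2 \,\big|\, \mathcal{G}\right] = \frac{\sigma_k^2}{N_k(T)},
\]
and since $N_k(T)$ is $\mathcal{G}$-measurable, multiplying through and taking the outer expectation yields $\mathbb{E}_Q[N_k(T)(\hat{\mu}_k(T)-\mu_k)^2] = \sigma_k^2$ \emph{exactly}. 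This identity simultaneously establishes the ``$\leq \sigma_k^2$'' half of \eqref{eq::minimax_nonadaptive} and certifies that the sample mean attains the minimax value for every admissible $(P_k,\nu,T)$.

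For the matching lower bound, I would restrict the supremum over $(\nu,T)$ to the sub-class in which $\nu$ samples arm $k$ deterministically at every round and $T = n$ is deterministic. On this sub-class $N_k(T) = n$ almost surely, so the normalized risk collapses to $n\,\mathbb{E}\bigl(\hat{\mu}_k(n)-\mu_k\bigr)^2$ for an i.i.d.\ sample of size $n$, and the claim reduces to the classical minimax lower bound of $\sigma_k^2/n$ for mean estimation under variance $\sigma_k^2$. I would obtain this by further restricting $\mathbb{P}_k(\cdot,\sigma_k)$ to the Gaussian location sub-family $\{N(\mu,\sigma_k^2) : \mu \in \mathbb{R}\}$ (permissible because minimax risk is monotone in the model class) and invoking a Bayes-risk argument with prior $\mu \sim N(0,\tau^2)$: the Bayes-optimal estimator is the posterior mean with Bayes risk $\sigma_k^2\tau^2/(\sigma_k^2 + n\tau^2)$, so $n$ times this quantity tends to $\sigma_k^2$ as $\tau \to \infty$. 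Since Bayes risk lower bounds minimax risk, this completes the argument.

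The main subtlety, rather than the main obstacle, lies in the interpretation of the class $\mathbb{P}_k(\mu_k,\sigma_k)$ in the lower bound step: a genuine minimax lower bound requires the true mean parameter to vary over some sub-family (as in the Gaussian location model above), since otherwise a trivial constant estimator would attain zero risk. Once this convention is fixed, the upper bound is essentially one application of the tower property, with nonadaptivity of $\nu$ and $T$ being exactly the property that turns the conditional MSE identity into an equality rather than an inequality.
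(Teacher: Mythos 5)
Your proof is correct and follows essentially the same route as the paper: an exact conditional-variance computation showing the sample mean's normalized risk equals $\sigma_k^2$ for the upper bound, and a Gaussian conjugate-prior Bayes-risk argument with a diffuse-prior limit for the lower bound. The only difference is that you restrict the lower bound to deterministic sampling and a deterministic horizon before invoking the classical i.i.d.\ result, whereas the paper runs the same Bayes computation for an arbitrary nonadaptive $(\nu, T)$ and uses $N_k(T)\geq 1$ together with the limit $\rho \searrow 0$, which yields the marginally stronger fact that the $\sigma_k^2$ lower bound holds for every fixed nonadaptive design rather than only a worst-case one; both versions suffice for the proposition as stated.
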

The proof of the proposition is based on standard decision-theoretic arguments and can be found in Appendix~\ref{Appen::subSec::minimax_nonadap}.

 In practice, we often do not know ahead of time which arm $k$ would be the most interesting to study before looking at the data. For instance, we may want to estimate the mean for the arm with the largest observed empirical mean, or the second largest, or even the smallest. In this case, the target of inference is $\mu_\kappa$, where $\kappa$ is an adaptive choice which possibly depends on the collected data $\D_{\Tau}$. 

Following \cite{jiao2017dependence}, to quantify dependence between $\kappa$ and $\D_{\Tau}$, we deploy the information-theoretic dependence measure (an $f_q$-divergence)
\begin{equation}
    I_{q}(\kappa ; \D_{\Tau}):= D_{f_q}\left(P_{\left(\kappa, \D_{\Tau}\right)}| P_{\kappa}\otimes P_{\D_{\Tau}} \right),
\end{equation}
where $q \geq 1$, $f_{q}(x):=|x-1|^q$ and
$D_{f_q}(Q' |Q):=  \int f_q\left(\frac{\mathrm{d}Q'}{\mathrm{d}Q}\right) \mathrm{d} Q$, assuming that $Q' \ll Q$. It can be easily checked that $I_q(\kappa; \D_{\Tau}) \geq 0$ and that $I_q(\kappa; \D_{\Tau}) = 0$ if and only if $\kappa$ and $\D_{\Tau}$ are independent.  It can also be  showed that $I_q(\kappa, \D_{\Tau})$ can be upper bounded as
 \[
 I_q(\kappa, \D_{\Tau}) \leq 1 + \sum_{k=1}^{\K} p_k^2\left(\left|\frac{1}{p_k}-1\right|^q -1\right),
 \]
 where $p_k := \mathbb{P}\left(\kappa = k\right),~~\forall k \in [\K]$. In particular, 
 \[
 I_q(\kappa, \D_{\Tau}) \leq \frac{\K -1}{\K}\left[(\K - 1)^{q-1} + 1\right] < 1 + (\K)^{q-1},
 \]
 for $1 \leq q \leq 2$ \cite[Lemma~1]{jiao2017dependence}.

For nonadaptive sampling and stopping (hence, $\Tau=T$), \cite{jiao2017dependence} showed how to bound the bias of adaptively chosen random variables with finite moments by using $I_{q}(\kappa ; \D_T)$. More precisely, suppose each $\sqrt{n}\left(\hat{\mu}_k - \mu_k\right)$ has zero mean and its $p$-norm is given by $\left(\mathbb{E}\left|\sqrt{n}\left(\hat{\mu}_k - \mu_k\right)\right|^p\right)^{1/p}=\sigma_k^{(p)}$. Also, for any $r \geq 1$, let $\|\sigma_\kappa^{(p)}\|_r$ be the $r$-norm of $\sigma_\kappa^{(p)}$, naturally defined by 
\begin{equation}
\|\sigma_\kappa^{(p)}\|_r = \left(\sum_{k=1}^{\K} \mathbb{P}(\kappa = k)\left(\sigma_k^{(p)}\right)^r \right)^{1/r}.
\end{equation}
Then, for any $p, q> 1$ with $1/p + 1/q =1$, the (scaled) bias of $\hat{\mu}_{\kappa}$ can be bounded as
\begin{equation}\label{eq::jiao_bound_on_bias}
\left|\mathbb{E}\sqrt{n}\left(\hat{\mu}_\kappa - \mu_\kappa\right)\right| \leq \|\sigma_\kappa^{(p)}\|_p I_q^{1/q} (\kappa, \D_T).
\end{equation}
The above result~\cite{jiao2017dependence} can be extended to a bound on the $\ell_2$ risk of an adaptively chosen sample mean (under nonadaptive sampling and stopping) as follows.

\begin{proposition} \label{prop::bounds_l2_finite_moment}
  Consider a nonadaptive sampling algorithm and stopping rule, and assume that each arm has a finite $2p$-norm for a given $p>1$. Then, the normalized $\ell_2$ risk of the sample mean can be bounded as
  \begin{equation} \label{eq::bounds_l2_finite_moment}
      \mathbb{E}\left[N_\kappa(T)\left(\hat{\mu}_\kappa(T) - \mu_\kappa\right)^2\right] \leq \left\|\sigma_\kappa\right\|_2^2 + C_p\left\|\sigma_\kappa^{(2p)}\right\|_{2p}^2 I_q^{1/q} (\kappa, \D_T),
  \end{equation}
  where $C_p$ is a constant depending only on $p$ and $q > 1$ is such that $1/p + 1/q =1$.
\end{proposition}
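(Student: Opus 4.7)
The plan is to write $Z_k := N_k(T)(\hat\mu_k(T) - \mu_k)^2$ so that the quantity of interest is $\mathbb{E}[Z_\kappa]$, and to split it as
\[
\mathbb{E}[Z_\kappa] \;=\; \underbrace{\sum_{k=1}^{\K} p_k\, \mathbb{E}[Z_k]}_{\text{``independent copy'' part}} \;+\; \underbrace{\Bigl(\mathbb{E}[Z_\kappa] - \sum_{k=1}^{\K} p_k\, \mathbb{E}[Z_k]\Bigr)}_{\text{``dependence'' part}},
\]
where $p_k = \mathbb{P}(\kappa = k)$. The first term is exactly $\mathbb{E}_{\kappa'\otimes \mathcal{D}_T}[Z_{\kappa'}]$ with $\kappa'$ an independent copy of $\kappa$, and the second will be controlled by $I_q^{1/q}(\kappa;\mathcal{D}_T)$ via the variational representation of $f_q$-divergence.

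First I would evaluate the independent-copy part. Since the sampling and stopping are nonadaptive, conditioning on $\{A_t\}_{t=1}^T$, the draws allocated to arm $k$ are i.i.d.\ from $P_k$, so $\mathbb{E}[(\hat\mu_k(T) - \mu_k)^2 \mid \{A_t\}] = \sigma_k^2/N_k(T)$ on $\{N_k(T) \ge 1\}$, and therefore $\mathbb{E}[Z_k] = \sigma_k^2$. Summing against $p_k$ yields $\sum_k p_k\sigma_k^2 = \|\sigma_\kappa\|_2^2$, which is the leading term in the claimed bound.

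Next I would handle the dependence part via the variational/Hölder bound associated to the $f_q$-divergence (as in \citet{jiao2017dependence}, Lemma~1 and the ensuing derivation of \eqref{eq::jiao_bound_on_bias}): for any measurable $g(\kappa, \mathcal{D}_T)$,
\[
\mathbb{E}_{(\kappa,\mathcal{D}_T)}[g] - \mathbb{E}_{\kappa'\otimes\mathcal{D}_T}[g] \;\le\; \bigl(\mathbb{E}_{\kappa'\otimes\mathcal{D}_T}|g|^{p}\bigr)^{1/p}\, I_q^{1/q}(\kappa;\mathcal{D}_T).
\]
Applying this with $g(\kappa, \mathcal{D}_T) = Z_\kappa = N_\kappa(T)(\hat\mu_\kappa(T)-\mu_\kappa)^2$ leaves me to bound $\sum_k p_k\,\mathbb{E}[Z_k^{p}]$. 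For each fixed $k$, condition again on $\{A_t\}$, so that $\hat\mu_k(T) - \mu_k$ is an i.i.d.\ average, and invoke a Marcinkiewicz--Zygmund / Rosenthal moment inequality: for $p \ge 1$ there is a constant $C_p$ with $\mathbb{E}[(\hat\mu_k(n)-\mu_k)^{2p}] \le C_p (\sigma_k^{(2p)})^{2p}/n^p$, whence $\mathbb{E}[N_k(T)^{p}(\hat\mu_k(T)-\mu_k)^{2p} \mid \{A_t\}] \le C_p(\sigma_k^{(2p)})^{2p}$, giving $\mathbb{E}[Z_k^p] \le C_p(\sigma_k^{(2p)})^{2p}$. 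Summing produces $(\mathbb{E}_{\kappa'\otimes\mathcal{D}_T}|g|^p)^{1/p} \le C_p^{1/p}\,\|\sigma_\kappa^{(2p)}\|_{2p}^{2}$, which combined with the variational bound and the leading term yields exactly \eqref{eq::bounds_l2_finite_moment} after relabelling the constant.

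The main obstacle is the moment bound $\mathbb{E}[N_k(T)^{p}(\hat\mu_k(T)-\mu_k)^{2p}] \le C_p(\sigma_k^{(2p)})^{2p}$: it is the only step that goes beyond the variance computation used in the classical (fixed-arm) case and it is crucial that the power $N_k(T)^{p}$ cancels the $1/N_k(T)^p$ provided by the Marcinkiewicz--Zygmund inequality. This cancellation is what makes the bound $\mathcal{D}_T$-agnostic apart from the $f_q$-divergence term, and it is also the reason the finite $2p$-norm assumption (rather than merely variance) is needed when $p>1$.
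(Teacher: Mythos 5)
Your proposal is correct and follows essentially the same route as the paper: the leading term $\|\sigma_\kappa\|_2^2$ comes from the exact conditional variance computation under nonadaptive sampling/stopping, the dependence term is controlled through the $f_q$-divergence, and the key moment bound $\mathbb{E}[N_k(T)^p(\hat{\mu}_k(T)-\mu_k)^{2p}]\leq C_p^p(\sigma_k^{(2p)})^{2p}$ is obtained exactly as in the paper's Claim~\ref{claim::p_norm_bound_nonadap} via the Marcinkiewicz--Zygmund inequality conditioned on the allocation sequence. The only cosmetic difference is that you invoke the H\"older form of the $f_q$-divergence bound directly, whereas the paper uses the variational lower bound of Lemma~\ref{lemma::phi_diver_lower_bound} with a free scaling parameter $\lambda$ and then optimizes over $\lambda$; these two steps are equivalent.
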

The proof of Proposition~\ref{prop::bounds_l2_finite_moment} can be found in Appendix~\ref{Appen::subSec::bounds_l2_finite_moment} and is based on a variational representation of the $f_q$-divergence along with the Marcinkiewicz–Zygmund inequality~\cite{marcinkiewicz1937fonctions}.  

The upper bound in \cref{prop::bounds_l2_finite_moment} cannot be improved in general. For example, if $\mathbb{P}\left(\kappa = k\right) = 1$ then the mutual dependence term $I_q^{1/q} (\kappa, \D_{\Tau})$ is equal to $0$ and we recover the exact $\ell_2$ risk $\sigma_k^2$. Similarly, if the selected arm $K$ is chosen in a random but nonadaptive manner, then $I_q^{1/q} (K, \D_{\Tau}) = 0$ and  the bound in \eqref{eq::bounds_l2_finite_moment} reduces to $\|\sigma_K\|_2^2$. When $\kappa$  is adaptively chosen and $I_q^{1/q} (\kappa, \D_{\Tau}) > 0$, it is unclear whether the bound in \cref{prop::bounds_l2_finite_moment} is tight, but its tightness or lack thereof is quite analogous to the bounds in earlier work, where the sampling and stopping are nonadaptive~\eqref{eq::jiao_bound_on_bias}.



 \subsection{Normalized \texorpdfstring{$\ell_2$}{l2} risk and unnormalized \texorpdfstring{$\ell_1$}{l1} risk under fully adaptive settings} \label{subSec::moment_full_adap}
  The techniques used in the previous section deliver risk bounds only under nonadaptive sampling and stopping rules but they do not generalize readily to fully adaptive settings.
 In particular, the bias bound in \cite{jiao2017dependence} and the risk bound in Proposition~\ref{prop::bounds_l2_finite_moment} are not directly applicable because each $\hat{\mu}_k(\tau) -\mu_k$ is no longer centered, due to the bias caused by adaptive sampling, stopping and rewinding.  Furthermore, the bound for the bias given in equation \eqref{eq::jiao_bound_on_bias}  no longer holds under the fully adaptive setting because the bias can be non-zero even if $\kappa$ is independent of $\mathcal{D}$. 

Below we show that the normalized $\ell_2$ risk bound for nonadaptive sampling and stopping strategies given in Proposition~\ref{prop::bounds_l2_finite_moment} generalizes to the fully adaptive setting, assuming the existence of higher moments and with a slightly stronger risk normalization factor of $N_k(\tau)/\log N_k(\tau)$, which can be regarded as a (small) price for adaptivity.

For any $t$ such that $N_k(t) > 1$ for all $k \in [k^*]$, we define
\begin{equation}
\label{eq:normalized-samplesize-log}
    \tilde{N}_k(t) := \frac{N_k(t)}{\log N_k(t)}, \quad  k \in [\K].
\end{equation}
We now present the main result of this section.
\begin{theorem} \label{thm::bounds_in_fully_adap_finite_moment}
Suppose each arm has a finite $2(p+\epsilon)$-norm for some $p \geq 1$ and $\epsilon >0$.  Consider any adaptive sampling algorithm such that $\min_{k \in [\K]} N_k(t_0) \geq 3$  almost surely for some deterministic time $t_0$, and adaptive stopping rule $\Tau \geq t_0$. Then, for any adaptive rewound time $t_0 \leq \tau \leq \Tau$ and adaptively chosen arm $\kappa$, it holds that
\begin{align}\label{eq::adap_risk_finite_moment}
	\mathbb{E} \left[\tilde{N}_\kappa (\tau) \left(\hat{\mu}_{\kappa}(\tau)- \mu_{\kappa} \right)^2 \right]
	&\leq C_{1,\epsilon}\|\sigma_\kappa\|_2^2 + C_{p,\epsilon}\|\sigma_\kappa\|_{2p}^2  I_q^{1/q}\left(\kappa, \D_{\Tau}\right),
	\end{align}
where $q > 1$ satisfies $1/p + 1/q =1$ and $C_{1, \epsilon}$ is a positive constant depending only on $\epsilon$, and $C_{p,\epsilon}$ is a positive constant depending only on $p,\epsilon$.
\end{theorem}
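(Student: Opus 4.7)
The plan is to combine the variational (Hölder) representation of the $f_q$-divergence used in the proof of Proposition~\ref{prop::bounds_l2_finite_moment} with a new time-uniform martingale maximal inequality that simultaneously absorbs adaptive sampling, stopping, choosing, and rewinding. The log factor in the normalizer $\tilde N_k = N_k/\log N_k$ is precisely the slack needed to make the maximal bound summable.

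\emph{Step 1 (decoupling).} Define $g(\kappa,\D_\Tau) := \tilde N_\kappa(\tau)(\hat\mu_\kappa(\tau)-\mu_\kappa)^2 \geq 0$, viewed as a function of $\kappa$ and the full dataset $\D_\Tau$ (which also determines the rewound time $\tau$). The same variational argument as in Proposition~\ref{prop::bounds_l2_finite_moment} yields
\[
\mathbb{E}[g(\kappa,\D_\Tau)] \;\leq\; \mathbb{E}_{P_\kappa \otimes P_{\D_\Tau}}[g] \;+\; \bigl(\mathbb{E}_{P_\kappa \otimes P_{\D_\Tau}}[g^p]\bigr)^{1/p} I_q^{1/q}(\kappa;\D_\Tau),
\]
with $1/p+1/q=1$. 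Under the product measure $\kappa$ and $\D_\Tau$ decouple, so both terms reduce to $P_\kappa$-averages of the per-arm quantities $\mathbb{E}[\tilde N_k(\tau)(\hat\mu_k(\tau)-\mu_k)^2]$ and $\mathbb{E}[(\tilde N_k(\tau)(\hat\mu_k(\tau)-\mu_k)^2)^p]$ with $k$ fixed and $\tau$ still adaptive. Crucially, in contrast to Proposition~\ref{prop::bounds_l2_finite_moment}, $\hat\mu_k(\tau)-\mu_k$ is \emph{not} centered here because $\tau$ is data-dependent; this is exactly what necessitates a time-uniform bound.

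\emph{Step 2 (per-arm time-uniform bound).} For each fixed $k$, re-index the arm-$k$ observations by the arm-specific counter $j=N_k(\cdot)$ so they become i.i.d.\ draws from $P_k$; let $S_j$ be their partial sum. Since $\tau$ is $\F_\Tau$-measurable with $N_k(\tau) \geq N_k(t_0) \geq 3$, each per-arm expectation is bounded by
\[
\mathbb{E}\sup_{j \geq 3} \frac{|S_j - j\mu_k|^{2p}}{(j\log j)^p}.
\]
The core technical claim to establish is that, for i.i.d.\ $Y_i$ with $\mathbb{E}|Y_1-\mu|^{2(p+\epsilon)} < \infty$,
\[
\mathbb{E}\sup_{j \geq 3}\frac{|S_j - j\mu|^{2p}}{(j\log j)^p} \;\leq\; C_{p,\epsilon}\,\sigma^{2p}.
\]
I would prove this by dyadic peeling into blocks $j \in [2^\ell, 2^{\ell+1})$ for $\ell \geq 2$. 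On each block, Doob's $L^r$ maximal inequality with $r = 2(p+\epsilon)$, combined with the Marcinkiewicz--Zygmund/Rosenthal bound $\mathbb{E}|S_N - N\mu|^r \leq C_r N^{r/2}\sigma^r + C_r N (\sigma^{(r)})^r$ and Markov's inequality, produces a block-tail of order $\sigma^r/(t\ell)^{r/(2p)}$ plus a geometrically decaying remainder. Summing over $\ell$ (which converges because $r/(2p) = 1+\epsilon/p > 1$, while $r/2 - 1 \geq \epsilon > 0$ kills the Rosenthal residual geometrically) yields a pooled tail $\mathbb{P}(\sup_j(\cdot) > t) \leq C\sigma^r/t^{r/(2p)}$, and integrating in $t$ gives the claimed $C_{p,\epsilon}\sigma^{2p}$. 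Substituting with $p=1$ into the first term of Step~1 and with the given $p$ into the second term, then taking the $P_\kappa$-average, delivers the theorem.

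\emph{Main obstacle.} The crux is the maximal inequality in Step~2. A naive per-block \emph{expectation} calculation gives $\mathbb{E}\sup_{j\in[2^\ell,2^{\ell+1})}(\cdot) \approx \sigma^2/\ell$, whose sum over $\ell$ diverges; the correct move is to control per-block \emph{tail probabilities} and only then integrate, which replaces the divergent $\sum 1/\ell$ by the convergent $\sum 1/\ell^{r/2}$. The $2\epsilon$ extra moments enter precisely here, and this is also what explains why the $\log N_k$ normalization in $\tilde N_k$ cannot be dropped: any smaller normalization would fail to produce a summable tail.
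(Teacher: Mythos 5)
Your overall architecture matches the paper's: both proofs first use the variational (H\"older-type) lower bound on the $f_q$-divergence to reduce the adaptively chosen arm to per-arm moment bounds, and both then need a \emph{time-uniform} $L^p$ bound on $\tilde N_k(\tau)(\hat\mu_k(\tau)-\mu_k)^2$ for a fixed arm to absorb adaptive sampling, stopping and rewinding at once. Where you genuinely diverge is in how that per-arm maximal bound is obtained. The paper stays in the original MAB filtration, peels on the values of $N_k(t)$, linearizes the quadratic boundary, and applies the $\ell_p$-version of the Dubins--Savage inequality to the martingale $S_k(t)-\mu_k N_k(t)$ (Lemma~\ref{lemma::adaptive_deviation_ineq_finite_moment}); you instead re-index the arm-$k$ rewards into an i.i.d.\ sequence and prove a maximal inequality for i.i.d.\ partial sums by dyadic blocking, Doob's $L^r$ inequality, Rosenthal/Marcinkiewicz--Zygmund, and Markov. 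The re-indexing is legitimate (the $j$-th draw from arm $k$ is i.i.d.\ $P_k$ because $\{A_t\}$ is predictable and $Y_t\mid A_t=k\sim P_k$ independently of $\F_{t-1}$), though it deserves an explicit line, and your route has the appeal of using only classical i.i.d.\ tools. Your explanation of why the $\log N_k$ normalization is exactly the summability threshold, and why one must pool tails rather than expectations, is correct and is the same mechanism driving the paper's peeling argument.

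There is, however, a concrete overclaim in your Step 2 that prevents you from recovering the theorem \emph{as stated}. Your own block computation gives, for $r=2(p+\epsilon)$, a block tail of order $\sigma^r/(t^{r/(2p)}\ell^{r/2})$ \emph{plus} a Rosenthal residual of order $(\sigma^{(r)})^r 2^{\ell(1-r/2)}/t^{r/(2p)}$; summing over $\ell$ therefore yields a pooled tail proportional to $\sigma^r+(\sigma^{(r)})^r$, not to $\sigma^r$ alone, and after integrating in $t$ the maximal inequality reads $\mathbb{E}\sup_j |S_j-j\mu|^{2p}/(j\log j)^p \lesssim \sigma^{2p}+(\sigma^{(2(p+\epsilon))})^{2p}$. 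So the constant cannot depend only on $(p,\epsilon)$: it carries the ratio $\sigma^{(2(p+\epsilon))}/\sigma$, and the final bound would involve the $2(p+\epsilon)$-norms of the arms rather than only $\|\sigma_\kappa\|_2$ and $\|\sigma_\kappa\|_{2p}$. The paper explicitly advertises that its bound depends on the variances only, and it achieves this because the constant in the $\ell_p$ Dubins--Savage inequality is universal once the martingale increments are normalized by their conditional variances. Your argument proves a qualitatively identical but quantitatively weaker statement; to get the theorem in its stated form you would either need to replace the Doob--Rosenthal step by a self-normalized or Dubins--Savage-type bound, or accept the higher-moment dependence in $C_{p,\epsilon}$.
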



Compared to the nonadaptive risk bound in Proposition~\ref{prop::bounds_l2_finite_moment}, the bound~\eqref{eq::adap_risk_finite_moment} under the fully adaptive setting only suffers a multiplicative logarithmic normalization term $\log N_\kappa(\tau)$ under slightly stronger moment condition.  It is also important to note that the bound~\eqref{eq::adap_risk_finite_moment} depends on the second moment terms $\{\sigma_k\}_{k=1}^{\K}$ only, and not on any higher moment.

The proof of Theorem~\ref{thm::bounds_in_fully_adap_finite_moment}, given in Section~\ref{subSec::Proof_of_second_them_finite_moment}, combines  the  novel deviation inequality for the normalized $\ell_2$ loss of Lemma \ref{lemma::adaptive_deviation_ineq_finite_moment} below, which holds for a fixed arm $k$, with the variational representation of the $f_q$-divergence which handles adaptive choosing.

\begin{lemma} \label{lemma::adaptive_deviation_ineq_finite_moment}
	Consider an adaptive sampling algorithm and stopping rule. For a fixed arm $k \in [\K]$ with a finite $2p$-norm, where $p>1$, and any random time $\tau$ such that $N_k(\tau) \geq 3$ almost surely, it holds that, for any $\delta \geq 0$,
	\begin{equation} \label{eq::adaptive_deviation_ineq_finite_moment}
	\mathbb{P}\left(\tilde{N}_k (\tau)\left(\frac{\hat{\mu}_k(\tau)-\mu_k}{\sigma_k} \right)^2 \geq \delta \right)  
	\leq \frac{C_p}{\delta^p},
	\end{equation}
	where $C_p$ is a constant depending only on $p$. 
\end{lemma}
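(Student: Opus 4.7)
The plan is to reduce the claim to a uniform-in-$n$ tail bound for an i.i.d.\ partial sum, and then apply a dyadic peeling argument combined with Doob's $L^{2p}$-maximal inequality and the Marcinkiewicz--Zygmund moment inequality. The logarithm in the normalization $\tilde N_k$ will be what makes the union bound summable for $p > 1$.

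First, I would exploit the fact that, conditional on the sequence of arm pulls, the observations from arm $k$ are i.i.d.\ from $P_k$. By a standard coupling, we may assume there exist i.i.d.\ random variables $X_1, X_2, \ldots \sim P_k$ with variance $\sigma_k^2$ such that, writing $M_n := \sum_{i=1}^{n}(X_i - \mu_k)$,
\[
\hat\mu_k(\tau) - \mu_k = \frac{M_{N_k(\tau)}}{N_k(\tau)}.
\]
The process $(M_n)$ is a martingale with respect to its natural filtration, but the random index $N_k(\tau)$ is generally \emph{not} a stopping time in that filtration; it is driven by the whole bandit dynamics and external randomness. This prevents a direct evaluation of any martingale tail inequality at $n = N_k(\tau)$. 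The workaround is to dominate the random-index event by a uniform-in-$n$ event: since $N_k(\tau) \geq 3$ almost surely,
\[
\mathbb{P}\!\left(\tilde N_k(\tau)\Big(\tfrac{\hat\mu_k(\tau)-\mu_k}{\sigma_k}\Big)^{2}\geq \delta\right) \leq \mathbb{P}\!\left(\exists\, n \geq 3:\ \frac{M_n^2}{n \log n\,\sigma_k^2}\geq \delta\right).
\]

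Next, I would peel $\{n \geq 3\}$ dyadically into blocks $B_j := [2^j, 2^{j+1})$ for $j \geq 1$. For $n \in B_j$ one has $n\log n \geq 2^{j}(j\log 2)$, so the event inside the supremum over $B_j$ forces $\max_{n \leq 2^{j+1}}|M_n| \geq \sigma_k\sqrt{\delta\,(j\log 2)\,2^{j}}$. Applying Doob's $L^{2p}$-maximal inequality to the submartingale $|M_n|^{2p}$ and then Marcinkiewicz--Zygmund gives
\[
\mathbb{E}\!\left[\max_{n\leq 2^{j+1}}|M_n|^{2p}\right] \leq \Big(\tfrac{2p}{2p-1}\Big)^{2p}\mathbb{E}|M_{2^{j+1}}|^{2p} \leq C_p\,(2^{j+1})^{p}\,\mathbb{E}|X_1-\mu_k|^{2p}.
\]
Markov then yields, after routine algebra,
\[
\mathbb{P}\!\left(\max_{n \leq 2^{j+1}} |M_n| \geq \sigma_k \sqrt{\delta\,(j \log 2)\,2^{j}}\right) \leq \frac{C_p}{\delta^{p}\,j^{p}},
\]
where the constant $C_p$ absorbs a factor $(\sigma_k^{(2p)}/\sigma_k)^{2p}$. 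Summing the block probabilities over $j \geq 1$ uses $\sum_{j \geq 1} j^{-p} < \infty$, which holds precisely because $p > 1$, yielding the claimed $C_p/\delta^p$ bound.

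The main obstacle is designing the dyadic bookkeeping so that (i) the union bound over blocks is summable, and (ii) Doob's maximal inequality followed by Marcinkiewicz--Zygmund produces a clean $1/j^p$ decay; the extra $\log N_k(\tau)$ factor in $\tilde N_k$ is exactly what turns the borderline-divergent $\sum 1/\log n$ into the summable $\sum 1/j^p$ on dyadic scale---this is the small ``price for adaptivity'' the authors referred to earlier. A secondary issue is that Marcinkiewicz--Zygmund naturally brings in $\mathbb{E}|X_1-\mu_k|^{2p}$ rather than $\sigma_k^{2p}$, so this higher-moment ratio must be absorbed into the constant (or the statement read as allowing $C_p$ to hide this distributional dependence).
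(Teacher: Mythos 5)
Your proof is correct, but it takes a genuinely different route from the paper's. The paper also reduces to a time-uniform statement and peels the sample-size axis into geometric blocks ($e^{j-1}\le N_k(t)<e^j$), but on each block it \emph{linearizes} the curved boundary: it finds a deterministic $\lambda_j$ so that the deviation event forces the self-normalized process $\lambda_j\bigl(S_k(t)-\mu_k N_k(t)\bigr)-\lambda_j^2\sigma_k^2 N_k(t)$ to cross the level $\delta j$, and then invokes the $\ell_p$-version of the Dubins--Savage line-crossing inequality of Khan (2009) for each $j$, summing $\sum_j j^{-p}<\infty$. You instead pass to the i.i.d.\ reward stack for arm $k$, take the union over the pull count $n$ rather than over bandit time $t$, and on each dyadic block flatten the boundary to a constant level so that Doob's $L^{2p}$ maximal inequality plus Marcinkiewicz--Zygmund applies; the $\log$ in $\tilde N_k$ again produces the summable $j^{-p}$. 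Both arguments are driven by the same mechanism (block peeling plus a per-block boundary-crossing bound whose failure probability decays like $j^{-p}$), and your reduction to the i.i.d.\ stack is legitimate in this model since the rewards from arm $k$, in the order drawn, are exactly i.i.d.\ from $P_k$ for any adaptive sampling rule. Your version is more elementary (no need for the $\ell_p$ Dubins--Savage inequality) and handles arbitrary random times $\tau$ just as the paper's does; the paper's version works directly with the bandit filtration and so does not rely on the stack representation.

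On the ``secondary issue'' you raise: you are right to flag it, and your reading is the correct one --- the constant must be allowed to depend on the ratio $\sigma_k^{(2p)}/\sigma_k$, not on $p$ alone. A constant depending only on $p$ cannot be correct as stated: take a single arm, $\tau=3$, and a centered three-point distribution placing mass $M^{-2}$ on $\pm M$ (so $\sigma_k=1$ but $\sigma_k^{(2p)}\to\infty$ as $M\to\infty$); then the event $\tilde N_k(\tau)(\hat\mu_k-\mu_k)^2\ge\delta$ with $\delta\asymp M^2$ has probability of order $M^{-2}$, which exceeds $C_p\delta^{-p}\asymp M^{-2p}$ for $p>1$ and $M$ large. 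So the distributional dependence you absorb into $C_p$ is unavoidable, and your proof establishes the only true version of the inequality.
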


 We believe this is the first polynomially decaying tail bound on the $\ell_2$ risk of the sample mean that holds at any arbitrary random time and only assuming arms with finite first $2p$ moments. This inequality is thus possibly of independent interest;  its proof is based on the $\ell_p$-version of the Dubins-Savage inequality given by \cite{khan2009p}; see Appendix~\ref{subSec::proof_of_adaptive_ineq_finite_moment}.


It is interesting to reflect on whether the $\log N_k(\tau)$ in~\eqref{eq::adaptive_deviation_ineq_finite_moment} can be replaced by a term like $\log\log N_k(\tau)$. At first glance, the answer seems subtle: if $\Tau_b$ is the stopping time defined in \cref{eg::lil_stopping}, then
\begin{equation}
    	\mathbb{P}\left(\frac{N_k(\Tau_b)}{\log\log N_k(\Tau_b)}\left(\frac{\hat{\mu}_k(\Tau_b)-\mu_k}{\sigma_k} \right)^2 \geq 1 \right)  = 1.
\end{equation}
More directly, the law of iterated logarithm implies 
\begin{equation}
    	\mathbb{P}\left(\exists t \geq 1: \frac{N_k(t)}{\log\log N_k(t)}\left(\frac{\hat{\mu}_k(t)-\mu_k}{\sigma_k} \right)^2 \geq 2 \right)  = 1,
\end{equation}
provided that $N_k(t) \to \infty$ as $t \to \infty$. Could constants larger than 2 be employed? The answer is yes, but the variance must be modified using a rather sophisticated and relatively uncommon self-normalization bound. Combining the results of \citep[Lemma 3(f)]{howard2020time}  with \citep[Theorem 1]{howard2021time}, it is possible to infer that if $N_k(\tau) \geq b \geq 3$ then, for any $\delta > 1$,
\begin{equation}
   	\mathbb{P}\left(\frac{N_k(\tau)}{\log\log N_k(\tau)}\left(\frac{\hat{\mu}_k(\tau)-\mu_k}{\tilde{\sigma}_k(\tau)} \right)^2\geq C_b \delta\right)
	\leq 2\exp\left\{-\delta\right\},
\end{equation}
where $C_b$ is a constant larger than $2e$ that depends only on $b$ and $\tilde{\sigma}_k^2(\tau)$ is defined by
\begin{equation}
    \tilde{\sigma}_k^2(\tau):= \frac{2}{3}\sigma_k^2 + \frac{1}{3N_k(\tau)}\sum_{s=1}^\tau \mathbbm{1}(A_s = k)(Y_s - \mu_k)^2 := \frac{2}{3}\sigma_k^2  + \frac{1}{3}\hat{\sigma}_k^2(\tau).
\end{equation}
Note that, if $N_k(\tau) \to \infty$ then $ \tilde{\sigma}_k^2(\tau) \to \sigma_k^2$. This bound is similar to one in \cref{lemma::adaptive_deviation_ineq}. Since this leads us further afield than the purposes of this paper, we do not further discuss such bounds.

Now, for any $r >0$, define the $r$-th order logarithmically discounted sample size of an adaptively chosen arm as 
\begin{equation}
\tilde{n}^{\eff,r}_\kappa := \left[\mathbb{E}\left[1/\tilde{N}_\kappa^r (\tau)\right]\right]^{-1/r}, \quad r >0,
\end{equation}
where the expectation is over the randomness in all four sources of adaptivity. This quantity is nonrandom, and the subscript $\kappa$ merely differentiates it from the effective sample size of a fixed arm, and is not to be interpreted as residual randomness. It is easy to check that $\tilde{n}^{\eff,r}_\kappa$ is decreasing with respect to $r$, by Jensen's inequality.
The following corollary provides bounds for the unnormalized $\ell_{2r}$ risk of the sample mean for all $r \in (0,1)$ based on $\tilde{n}^{\eff, r}_\kappa$. The proof of the corollary can be found in Appendix~\ref{subSec::proof_of_bound_in_finite_moment}.
\begin{corollary}\label{cor::bounds_in_finite_moment}
	Suppose each arm has a finite $2(p+\epsilon)$-norm for some $p \geq 1$ and $\epsilon >0$. Then, for any $r \in (0,1)$, the $r$-quasi-norm of the $\ell_2$-loss is bounded as
	\begin{equation}\label{eq::1/p-norm_bound_finite_moment}
    \left[\mathbb{E}\left(\hat{\mu}_{\kappa}(\tau)- \mu_{\kappa} \right)^{2r}\right]^{1/r}
	\leq \frac{C_{1,\epsilon}\|\sigma_\kappa\|_2^2 + C_{p,\epsilon}\|\sigma_\kappa\|_{2p}^2  I_q^{1/q}\left(\kappa, \D_{\Tau}\right)}{\tilde{n}^{\eff,r/(1-r)}_\kappa} .
	\end{equation}
\end{corollary}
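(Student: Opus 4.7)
The plan is to deduce the corollary directly from Theorem~\ref{thm::bounds_in_fully_adap_finite_moment} by a single Hölder interpolation that separates the weighted $\ell_2$ risk (already bounded by the theorem) from a negative moment of the log-discounted sample size (identified with $\tilde n^{\eff, r/(1-r)}_\kappa$ by its very definition).

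First I would write the raw loss as the product
\[
\bigl(\hat{\mu}_\kappa(\tau) - \mu_\kappa\bigr)^{2r} = \Bigl[\tilde{N}_\kappa(\tau)\bigl(\hat{\mu}_\kappa(\tau) - \mu_\kappa\bigr)^2\Bigr]^r \cdot \tilde{N}_\kappa(\tau)^{-r},
\]
take expectations, and apply Hölder's inequality with conjugate exponents $1/r$ and $1/(1-r)$, which are admissible since $r \in (0,1)$. This yields
\[
\mathbb{E}\bigl(\hat{\mu}_\kappa(\tau) - \mu_\kappa\bigr)^{2r} \leq \Bigl(\mathbb{E}\bigl[\tilde{N}_\kappa(\tau)\bigl(\hat{\mu}_\kappa(\tau)-\mu_\kappa\bigr)^2\bigr]\Bigr)^{r} \Bigl(\mathbb{E}\bigl[\tilde{N}_\kappa(\tau)^{-r/(1-r)}\bigr]\Bigr)^{1-r}.
\]

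Next, I would plug in the two available ingredients. Theorem~\ref{thm::bounds_in_fully_adap_finite_moment} bounds the first factor by $C_{1,\epsilon}\|\sigma_\kappa\|_2^2 + C_{p,\epsilon}\|\sigma_\kappa\|_{2p}^2\, I_q^{1/q}(\kappa,\D_\Tau)$, while, directly from the definition of $\tilde{n}^{\eff, r/(1-r)}_\kappa$, the second factor equals $(\tilde{n}^{\eff, r/(1-r)}_\kappa)^{-r/(1-r)}$. Raising both sides of the resulting inequality to the power $1/r$ collapses the exponents neatly: the weighted-risk bound appears with power $1$, and the negative moment contributes the factor $(\tilde{n}^{\eff, r/(1-r)}_\kappa)^{-1}$, which is exactly \eqref{eq::1/p-norm_bound_finite_moment}.

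No serious obstacle is expected; the argument is essentially a one-line Hölder interpolation. The only points to check are that the conjugate exponents $1/r$ and $1/(1-r)$ are admissible (true because $r \in (0,1)$) and that $\tilde{N}_\kappa(\tau)$ is almost surely bounded away from $0$ so that the negative moment $\mathbb{E}[\tilde N_\kappa(\tau)^{-r/(1-r)}]$ is finite. The latter is guaranteed by the hypothesis $\min_{k}N_k(t_0)\ge 3$ a.s.\ and $\tau\ge t_0$ inherited from Theorem~\ref{thm::bounds_in_fully_adap_finite_moment}, which forces $\tilde N_\kappa(\tau) \ge 3/\log 3 > 0$ almost surely.
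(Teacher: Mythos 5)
Your proposal is correct and follows essentially the same route as the paper: the paper's proof performs exactly this Hölder interpolation, splitting $(\hat{\mu}_\kappa-\mu_\kappa)^{2/p}$ into $\tilde{N}_\kappa^{-1/p}$ times $\tilde{N}_\kappa^{1/p}(\hat{\mu}_\kappa-\mu_\kappa)^{2/p}$ with conjugate exponents $p=1/r$ and $q=1/(1-r)$, then invokes Theorem~\ref{thm::bounds_in_fully_adap_finite_moment} and the definition of $\tilde{n}^{\eff,q/p}_\kappa=\tilde{n}^{\eff,r/(1-r)}_\kappa$. Your exponent bookkeeping and the remark that $\tilde{N}_\kappa(\tau)\ge 3/\log 3>0$ ensures finiteness of the negative moment are both sound.
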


\noindent In particular, by choosing $r = 1/2$, the above results immediately yields a bound for the $\ell_1$ risk:
\begin{equation}\label{eq::1/2-norm_bound_finite_moment}
\mathbb{E}\left|\hat{\mu}_{\kappa}(\tau)- \mu_{\kappa} \right|
	\leq \sqrt{\frac{C_{1,\epsilon}\|\sigma_\kappa\|_2^2 + C_{p,\epsilon}\|\sigma_\kappa\|_{2p}^2  I_q^{1/q}\left(\kappa, \D_{\Tau}\right)}{\tilde{n}^{\eff,1}_\kappa}}.
\end{equation}

\noindent Note that if the choosing rule $\kappa$ is equal to $k$ (so that $I_q(\kappa, \D_{\Tau}) =0$), the $\ell_1$ risk bound \eqref{eq::1/2-norm_bound_finite_moment} matches to the nonadaptive standard $\ell_1$ risk bound, of order of $\sigma_k / \sqrt{n}$, with the sample size $n$ replaced by the logarithmically discounted effective sample size $\tilde{n}^{\eff,1}_k$. In Section~\ref{subSec::fixed_target}, we derive an alternative bound that depend on the undiscounted effective sample size $n_k^{\eff}$, for a fixed target arm and at a stopping time, by assuming stronger tail conditions. 

One may wonder whether the logarithmic discounting factor in the normalized risk is necessary to derive a finite upper bound. For arms with finite variance, the following  example shows that in general there is no finite upper bound on the normalized risk $\mathbb{E} \left[N_{k}(\Tau) \left(\hat{\mu}_{k}(\Tau)- \mu_{k} \right)^2 \right]$. 

\begin{example} \label{eg::lil_stopping}
Suppose each arm has a finite variance $\sigma_k^2$. For a fixed $k$, assume that $N_k(t) \rightarrow \infty$ almost surely as $t \rightarrow \infty$. For any $b \geq 3$, we define the stopping rule
    \begin{equation}
        \Tau_b := \inf\left\{t \geq 1 : N_k(t) \geq b ~~\text{and}~~ \frac{S_k(t) - \mu_k N_k(t)}{\sigma_k\sqrt{N_k(t) \log\log N_k(t)}} \geq 1 \right\}.
    \end{equation}
Due to the law of the iterated logarithm, we know that $\mathbb{P}\left(\Tau_b < \infty \right) = 1$. From the definition of $\Tau_b$, we immediately infer that
\begin{equation} \label{eq::lower_bound}
 \sigma_k^2\mathbb{E}\left[\log\log N_k(\Tau_b)\right] 
 ~\leq~ \mathbb{E} \left[N_{k}(\Tau_b) \left(\hat{\mu}_{k}(\Tau_b)- \mu_{k} \right)^2 \right].
\end{equation} 
Since the left hand side approaches infinity as $b \to \infty$, we see that there is no finite upper bound on the normalized risk $\mathbb{E} \left[N_{k}(\Tau) \left(\hat{\mu}_{k}(\Tau)- \mu_{k} \right)^2 \right]$ in general.
\end{example}
The above example demonstrates that some correction to the normalized risk, such as the logarithmic discounting of the sample size in \eqref{eq:normalized-samplesize-log}, is necessary to derive a finite risk bound like in Theorem~\ref{thm::bounds_in_fully_adap_finite_moment}.
It is unclear whether the logarithmic discounting we used is optimal or if a smaller factor would have sufficed. 

In the next section we show that, for arms with exponentially decaying tails, we can deploy a smaller discounting factor, measured on  a log-log scale, that leads to  upper and lower bounds matching up to a constant term; see Theorem \ref{thm::bounds_in_fully_adap}.

\section{Risk bounds for arms with exponential tails}\label{sec:risk_under_sub_psi}
In this section we will assume stronger tail-decaying conditions on the arms and derive risk bounds for the sample means under various degree of adaptivity.  While the analysis and results might be cleanest for the mean-squared error of sub-Gaussian distributions (as is commonly assumed in the bandit literature), the proof for the more general case involving Bregman risks of sub-$\psi$ distributions (to be defined below) follows the same line of argument and hence we choose to present the results in a unified way. The sub-Gaussian results can be easily inferred as a special case.

\subsection{Sub-\texorpdfstring{$\psi$}{psi} arms and Bregman divergences as loss functions}\label{sec:sub-psi}
For fixed numbers $\lambda_{\min} < 0 < \lambda_{\max}$, let $\Lambda = (\lambda_{\min}, \lambda_{\max}) \subseteq \mathbb{R}$  be an open interval that contains $0$. A function $\psi : \Lambda \rightarrow [0, \infty)$ is called CGF-like if it obeys natural properties of a cumulant generating function (CGF), specifically that it is a non-negative, twice-continuously differentiable  and strictly convex function $\psi(0) = \psi^\prime (0) = 0$. 

A probability distribution $P$ is called \emph{sub-$\psi$} if the CGF of the centered distribution exists and is equal to or upper bounded by a ``CGF-like'' function $\psi$, that is,
\begin{equation}
\log \E_{Y\sim P}[e^{\lambda (Y - \mu)}] \leq \psi(\lambda) , ~~\forall \lambda \in  \Lambda \subseteq \mathbb{R}.
\end{equation}
This assumption is quite general and applies to all distributions with a CGF, including natural exponential family distributions, sub-Gaussian and sub-exponential distributions   \cite{howard2020time, howard2021time}. Throughout this section, we assume each arm is in a sub-$\psi$ class unless otherwise specified.

Our analyses make frequent use of the function $\psi_{\mu}^* : \Lambda^* \rightarrow \mathbb{R}$, the convex conjugate of $\psi_\mu(\lambda) := \lambda \mu+ \psi(\lambda)$ defined as
\begin{equation}
\psi_\mu^*(z) := \sup_{\lambda \in \Lambda} \lambda z - \psi_\mu (\lambda),~~\forall z \in \Lambda^* := \left\{x \in \mathbb{R} : \sup_{\lambda \in \Lambda} \lambda x - \psi_\mu (\lambda) < \infty   \right\}.
\end{equation}

For arms in a sub-$\psi$ class, it turns out to be natural to define the loss function as the Bregman divergence with respect to $\psi_\mu^*$:
\begin{equation}
D_{\psi_{\mu}^*} (\hat{\mu}, \mu) = \psi_{\mu}^* (\hat{\mu}) - \psi_{\mu}^* (\mu) - \psi_{\mu}^{*\prime}(\mu) \left(\hat{\mu}  - \mu\right).
\end{equation}
For instance, if the underlying distribution is sub-Gaussian, then the Bregman divergence reduces to the scaled $\ell_2$ loss. For more examples, see Appendix~\ref{sec::example_divergence}.
More generally, the Bregman divergence is equivalent to the KL loss when the underlying distribution is a natural univariate exponential family with a density 
\[p_\theta (x) =  \exp\left\{\theta x -  B(\theta) \right\}, ~~\theta \in \Theta \subset \mathbb{R},\]
with respect to a reference measure $\gamma$, where $\Theta \subset \left\{\theta \in \mathbb{R} :  \int e^{\theta x } \gamma(\mathrm{d}x) < \infty \right\}$ is the natural parameter space and $B \colon \Theta \rightarrow \mathbb{R}$ is a strictly convex function given by $\theta \mapsto \int e^{\theta x } \gamma(\mathrm{d}x)$. We assume throughout that $\Theta$ is nonempty and open.

For a fixed $\theta \in \Theta$, define $\Lambda_\theta := \left\{\lambda \in \mathbb{R} : \lambda + \theta \in \Theta\right\}$ and, for each $\lambda \in \Lambda_\theta$, let $\psi(\lambda) = \psi(\lambda; \theta) := B(\lambda + \theta) - B(\theta) -\lambda B^\prime(\theta)$. Using the properties of the log-partition function $B$, it can be easily checked that $p_\theta$ is sub-$\psi$. Since $B$ is strictly convex, there is a one-to-one correspondence between the natural parameter space  and the mean value parameter space $\mathrm{M} = \{ \mu \in \mathbb{R} \colon \mu = B'(\theta), \theta \in \Theta\}$. For any $\mu_0,\mu_1$ in the mean parameter space, let $\theta_0,\theta_1$ be corresponding natural parameters. The KL divergence between $p_{\theta_1}$ and $p_{\theta_0}$ induces a natural loss between $\mu_1$ and $\mu_0$ which is often called the KL loss:
\[
\ell_{KL}(\mu_1, \mu_0) := D_{KL}\left(p_{\theta_1} \|p_{ \theta_0} \right).
\]
The following well-known fact, based on the properties of the CGF of an exponential family and the duality of Bregman divergence, formally captures  how the KL loss is related to the Bregman loss \cite{azoury2001relative}. For completeness, we present a proof  in Appendix~\ref{subSec::prop_KL_equiv_psi_star}.
\begin{fact} \label{fact::KL_equiv_psi_star}
	Let $\psi$ be the CGF of a centered distribution in a one-dimensional exponential family. Then, for any $\mu_1$ and $\mu_0$ in the mean parameter space, we have
	\begin{equation} \label{eq::KL_equiv_psi_star}
	\ell_{KL}(\mu_1, \mu_0) = D_{\psi_{\mu_0}^*} (\mu_1, \mu_0) =\psi_{\mu_0}^* (\mu_1) = \psi^*(\mu_1 - \mu_0).
	\end{equation}
	Further, the last two equalities hold for any CGF-like $\psi$.
\end{fact}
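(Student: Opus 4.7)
I would prove the chain of four equalities from the right side in, establishing the two universal identities first (which only require $\psi$ to be CGF-like) and then tackling the KL identity using the exponential-family structure.

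\textbf{Step 1 (rightmost equality, no exponential family needed).} By the definition $\psi_{\mu_0}(\lambda) = \lambda\mu_0 + \psi(\lambda)$, a direct computation gives
\[
\psi_{\mu_0}^*(z) \;=\; \sup_{\lambda \in \Lambda}\bigl\{\lambda z - \lambda\mu_0 - \psi(\lambda)\bigr\} \;=\; \sup_{\lambda \in \Lambda}\bigl\{\lambda(z-\mu_0) - \psi(\lambda)\bigr\} \;=\; \psi^*(z - \mu_0),
\]
and setting $z = \mu_1$ gives the rightmost equality.

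\textbf{Step 2 (second equality, no exponential family needed).} From the definition of the Bregman divergence,
\[
D_{\psi_{\mu_0}^*}(\mu_1, \mu_0) \;=\; \psi_{\mu_0}^*(\mu_1) \;-\; \psi_{\mu_0}^*(\mu_0) \;-\; (\psi_{\mu_0}^*)'(\mu_0)(\mu_1 - \mu_0),
\]
so it suffices to show $\psi_{\mu_0}^*(\mu_0) = 0$ and $(\psi_{\mu_0}^*)'(\mu_0) = 0$. By Step 1 both are evaluations of $\psi^*$ and its derivative at $0$. Since $\psi$ is CGF-like, $\psi \geq 0$ and $\psi(0) = 0$, whence $\psi^*(0) = \sup_\lambda \{-\psi(\lambda)\} = 0$. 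For the derivative, strict convexity of $\psi$ together with $\psi'(0) = 0$ identifies $\lambda = 0$ as the unique maximizer in the definition of $\psi^*(0)$; by the standard Legendre identity $(\psi^*)'(y) = \lambda^*(y)$ this gives $(\psi^*)'(0) = 0$.

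\textbf{Step 3 (leftmost equality, uses exponential family).} Here I would compute both sides explicitly. On the KL side, the exponential-family densities give
\[
\ell_{KL}(\mu_1,\mu_0) \;=\; \int p_{\theta_1}(x)\bigl[(\theta_1-\theta_0)x - B(\theta_1) + B(\theta_0)\bigr]\,\gamma(\mathrm{d}x) \;=\; (\theta_1-\theta_0)\mu_1 - B(\theta_1) + B(\theta_0),
\]
using $\E_{p_{\theta_1}}[X] = B'(\theta_1) = \mu_1$. On the conjugate side, I would solve the optimization for $\psi^*(\mu_1-\mu_0)$ explicitly. Since $\psi'(\lambda;\theta_0) = B'(\lambda+\theta_0) - \mu_0$, the first-order condition $\psi'(\lambda^*) = \mu_1-\mu_0$ becomes $B'(\lambda^*+\theta_0) = \mu_1$; strict convexity of $B$ (which holds because $\Theta$ is open and nonempty) gives the unique solution $\lambda^* = \theta_1 - \theta_0$. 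Substituting back,
\[
\psi^*(\mu_1-\mu_0) \;=\; (\theta_1-\theta_0)(\mu_1-\mu_0) - \bigl[B(\theta_1) - B(\theta_0) - (\theta_1-\theta_0)\mu_0\bigr] \;=\; (\theta_1-\theta_0)\mu_1 - B(\theta_1) + B(\theta_0),
\]
which matches $\ell_{KL}(\mu_1,\mu_0)$ exactly. Combining with Steps 1 and 2 closes the chain.

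\textbf{Main obstacle.} The steps are all elementary, so there is no deep obstruction; the only genuine care is in Step 2, where I invoke $(\psi^*)'(0) = 0$. To justify it I would note that strict convexity of $\psi$ makes $\psi'$ a strictly increasing bijection onto its range, so the maximizer in the Legendre sup is characterized by $\psi'(\lambda^*) = y$, and $y = 0$ forces $\lambda^* = 0$ since $\psi'(0) = 0$; this identifies the derivative of $\psi^*$ at $0$ via the Legendre duality. If one prefers to avoid smoothness of $\psi^*$ altogether, an equally clean alternative is to verify the equality $D_{\psi^*_{\mu_0}}(\mu_1,\mu_0) = \psi_{\mu_0}^*(\mu_1)$ by an envelope argument in which $\mu_0$ is recognized as the minimizer of $z \mapsto \psi_{\mu_0}^*(z)$, so that the linear correction term in the Bregman divergence vanishes.
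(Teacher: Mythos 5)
Your proof is correct, and it reaches the same chain of identities as the paper but by a somewhat different route for the key step. The paper also verifies the rightmost equality by the same direct sup computation and kills the linear Bregman term via $\psi_{\mu_0}^{*}(\mu_0)=\psi_{\mu_0}^{*\prime}(\mu_0)=0$, so Steps 1 and 2 coincide in substance (your observation that these two equalities need only the CGF-like hypotheses is exactly what the last sentence of the Fact asserts). The difference is in the KL identity: the paper rewrites $\ell_{KL}(\mu_1,\mu_0)=B'(\theta_1)(\theta_1-\theta_0)-B(\theta_1)+B(\theta_0)$ as a \emph{primal} Bregman divergence $D_{\psi_{\mu_0}}(0,\theta_1-\theta_0)$ and then invokes the general duality $D_f(\theta_0,\theta_1)=D_{f^*}(\mu_1,\mu_0)$ (stated there as a separate fact) to pass to $D_{\psi_{\mu_0}^*}(\mu_1,\mu_0)$; you instead evaluate $\psi^*(\mu_1-\mu_0)$ explicitly by solving the first-order condition $B'(\lambda^*+\theta_0)=\mu_1$, i.e.\ $\lambda^*=\theta_1-\theta_0$, and check that both sides equal $(\theta_1-\theta_0)\mu_1-B(\theta_1)+B(\theta_0)$. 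Your route is more self-contained in that it proves the relevant instance of Legendre/Bregman duality by hand rather than citing it, at the cost of needing the maximizer to be interior and characterized by the FOC — which holds here because $\Theta$ is open and $B$ is strictly convex, as you note. Either argument is complete; no gap.
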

\noindent

Since the identity~\eqref{eq::KL_equiv_psi_star} recovers the $\ell_2$ loss for sub-Gaussian arms and the KL loss for exponential family arms, the Bregman divergence $D_{\psi_{\mu_\kappa}^*} (\hat{\mu}_\kappa(\tau), \mu_\kappa)$ is a natural loss function for the mean value parameter when the arms are sub-$\psi$. Also, by Taylor expanding $\psi_{\mu_0}^*$ around $\mu_0$, one can check that the Bregman divergence is locally equivalent to the $\ell_2$ loss since $D_{\psi_{\mu_0}^*}(\mu_1,\mu_0) =  \frac{1}{2\sigma_{\mu_0}^2}(\mu_1 - \mu_0)^2 + o(|\mu_1 - \mu_0|^2)$ for any $\mu_1$ and $\mu_0$ in the mean parameter space with $\sigma_{\mu_0}^2 := \psi_{\mu_0}''(0)$. In particular, if $\sigma_{\max}^2 := \sup_{\mu \in \mathcal{M}} \psi_{\mu}''(0) < \infty$, then the Bregman divergence is lower bounded as $D_{\psi_{\mu_0}^*}(\mu_1,\mu_0) \geq  \frac{1}{2\sigma_{\max}^2}(\mu_1 - \mu_0)^2 $. For example, the Bernoulli distribution with mean $\mu$ is a sub-$\psi$ distribution with $\psi(\lambda; \mu) = \log\left(1-\mu + \mu e^\lambda\right) - \lambda\mu$, and the corresponding Bregman divergence (which is equal to the KL divergence by \cref{fact::KL_equiv_psi_star}) is lower bounded for any $\mu_0, \mu_1 \in (0,1)$ as
\begin{align*}
 D_{\psi_{\mu_0}^*} (\mu_1, \mu_0) &= \ell_{KL}(\mu_1, \mu_0) 
 = \mu_1 \log\left(\frac{\mu_1}{\mu_0}\right) + (1-\mu_1) \log\left(\frac{1-\mu_1}{1-\mu_0}\right) \geq 2(\mu_1 - \mu_0)^2.
\end{align*}


Below, we will show that in the deterministic setting where a fixed number $n$ of independent observations are drawn from a single fixed distribution, the minimax Bregman risk for distributions belonging to an exponential family is of order $\frac{1}{n}$, and that  the sample mean is minimax rate-optimal. To get a lower bound, we need an additional regularity condition on the loss function.  For any function $d : \mathrm{M} \times \mathrm{M} \to [0,\infty)$, we say that  $d$ satisfies the \emph{local triangle inequality condition} \cite{yang1999information} if there exist positive constants $M \leq 1$ and $\epsilon_0$ such that for any $\mu_0, \mu_1, \mu_2 \in \mathrm{M}$, if $\max\left\{d(\mu_2, \mu_0), d(\mu_2, \mu_1)\right\} \leq \epsilon_0$, then $d(\mu_2, \mu_0) + d(\mu_2, \mu_1) \geq M \max\left\{ d(\mu_0, \mu_1), d(\mu_1, \mu_0)\right\}$. The local triangle inequality condition is satisfied by the square root KL divergence between Gaussian distributions with $M =1$. For general exponential family distributions, we may restrict the parameter space to make the condition satisfied. In particular, if $\inf_{\theta\in\Theta}B''(\theta) > 0$ and $\sup_{\theta\in\Theta}B''(\theta) < \infty$, the condition is satisfied with $
M = \sqrt{\frac{\inf_{\theta\in\Theta}B''(\theta)}{\sup_{\theta\in\Theta}B''(\theta)}} \in (0,1).
$

Under the local triangle inequality condition, we can prove that the minimax rate of convergence is $\frac{1}{n}$ and it can be achieved by the sample mean. This follows from standard minimax lower bounds \citep[see, e.g.][Theorem 2.2]{Tsybakov:2008} with a slight modification to accommodate for the local triangle inequality condition. The proof can be found in Appendix~\ref{Append::subSec::minimax_optimal}. Note that, for the sub-Gaussian case, the risk reduces to the normalized $\ell_2$ risk we studied in the previous section.

\begin{proposition}\label{prop::minimax_optimality}
	Let $\{X_i\}_{i=1}^n$ be an i.i.d.\ sample from a distribution in a natural exponential family  $\{p_\theta : \theta \in \Theta\}$.	For each $\theta \in \Theta$, let $\mu$ be the mean parameter and $\psi_{\mu}$ is the cumulant generating function corresponding to $\theta$. Then the risk of the sample mean, $\hat{\mu}(n) = \frac{1}{n}\sum_{i=1}^{n} X_i$, is bounded as 
	\begin{equation} \label{eq::upper_bound_sub-psi}
		\mathbb{E}_{P_\theta} \left[n D_{\psi_{\mu}^*}(\hat{\mu}(n), \mu)\right] \leq 2,~~\forall \theta \in \Theta.
	\end{equation}
	Also, if $\sqrt{D_{\psi_\mu^*}}$ satisfies the local triangle inequality condition, then, for a large enough $n$, the minimax risk  is lower bounded as 
	\begin{equation}
		\inf_{\tilde{\mu}}\sup_{\theta \in \Theta} \mathbb{E}_{P_\theta} \left[nD_{\psi_{\mu}^*}(\tilde{\mu}(n), \mu)\right] \geq \frac{M\log2}{16}.
	\end{equation}	 
\end{proposition}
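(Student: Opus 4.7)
}

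I would split the argument into two parts corresponding to the upper and lower bounds.

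\emph{Upper bound.} The plan is to obtain the bound $\mathbb{E}_{P_\theta}[n D_{\psi_\mu^*}(\hat{\mu}(n),\mu)] \le 2$ by integrating a two-sided Chernoff tail bound. Since $X_1 - \mu, \dots, X_n - \mu$ are i.i.d.\ draws from a sub-$\psi$ distribution, the sum $\sum_{i=1}^n (X_i - \mu)$ has CGF dominated by $n\psi$, and the standard optimization of the Chernoff bound gives
\[
\mathbb{P}(\hat{\mu}(n) - \mu \ge x) \le \exp(-n \psi^*(x)), \qquad x \ge 0,
\]
and an analogous bound for the left tail using $\psi^*(-x)$. By Fact~\ref{fact::KL_equiv_psi_star}, $\psi^*(\hat{\mu}(n) - \mu) = D_{\psi_\mu^*}(\hat{\mu}(n), \mu)$, so for any $t \ge 0$,
\[
\mathbb{P}\left(D_{\psi_\mu^*}(\hat{\mu}(n),\mu) \ge t\right) \le 2 e^{-nt}.
\]
Then by the layer-cake representation,
\[
\mathbb{E}\left[n D_{\psi_\mu^*}(\hat{\mu}(n),\mu)\right] = \int_0^\infty \mathbb{P}\!\left(D_{\psi_\mu^*}(\hat{\mu}(n),\mu) \ge s/n\right) ds \le \int_0^\infty 2 e^{-s}\, ds = 2.
\]
The only care needed is to verify that $\psi^*$ is strictly increasing on each side of $0$ with $\psi^*(0)=0$, so that the event $\{D_{\psi_\mu^*}(\hat\mu,\mu) \ge t\}$ is exactly $\{\hat\mu - \mu \ge x_+(t)\} \cup \{\hat\mu - \mu \le -x_-(t)\}$ for appropriate $x_\pm(t) \ge 0$; this is immediate from strict convexity of $\psi$ and CGF-like properties.

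\emph{Lower bound.} My plan is the classical Le Cam two-point method, combined with the local triangle inequality to convert a testing lower bound into an estimation lower bound. Pick a reference $\theta_0 \in \Theta$; since $\Theta$ is open and $B$ is strictly convex (so KL varies smoothly in the natural parameter), I can choose $\theta_1 \in \Theta$ close to $\theta_0$ with
\[
n \cdot D_{KL}(p_{\theta_0} \,\|\, p_{\theta_1}) = \log 2,
\]
so that $D_{KL}(p_{\theta_0}^{\otimes n} \,\|\, p_{\theta_1}^{\otimes n}) = \log 2$. Setting $\mu_i = B'(\theta_i)$ and using Fact~\ref{fact::KL_equiv_psi_star}, this also pins down $D_{\psi_{\mu_1}^*}(\mu_0,\mu_1) = (\log 2)/n$. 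For the testing step, the Bretagnolle--Huber inequality gives $1 - \mathrm{TV}(p_{\theta_0}^{\otimes n}, p_{\theta_1}^{\otimes n}) \ge \tfrac{1}{2}\exp(-D_{KL}) \ge \tfrac{1}{4}$, so for any test $\phi$ based on the sample,
\[
\max_{i \in \{0,1\}} P_{\theta_i}^{\otimes n}(\phi \ne i) \ge \tfrac{1}{2}\left(1 - \mathrm{TV}\right) \ge \tfrac{1}{8}.
\]
Given any estimator $\hat{\mu}$, define the plug-in test $\phi(\hat\mu) = \arg\min_{i \in \{0,1\}} D_{\psi_{\mu_i}^*}(\hat\mu, \mu_i)$. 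The local triangle inequality condition on $\sqrt{D_{\psi_\mu^*}}$ then forces: if the test errs (say truth is $\mu_1$ but $\phi = 0$, so $D_{\psi_{\mu_0}^*}(\hat\mu,\mu_0) \le D_{\psi_{\mu_1}^*}(\hat\mu,\mu_1)$), then the two square-root losses cannot both be small relative to $\sqrt{D_{\psi_{\mu_1}^*}(\mu_0,\mu_1)}$, so $D_{\psi_{\mu_1}^*}(\hat\mu,\mu_1) \gtrsim M^2 \cdot D_{\psi_{\mu_1}^*}(\mu_0,\mu_1) / 4$ (and symmetrically). Combining with the testing lower bound yields
\[
\sup_{\theta \in \Theta}\mathbb{E}_{P_\theta}\left[n D_{\psi_\mu^*}(\hat\mu,\mu)\right] \ge \frac{M\log 2}{16},
\]
for $n$ large enough that $(\log 2)/n \le \epsilon_0^2$ (ensuring the LTI kicks in). The ``large enough $n$'' hypothesis in the statement is exactly what allows the constructed two-point pair to lie in the LTI regime.

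\emph{Main obstacles.} The upper bound is essentially a routine Chernoff-plus-integration calculation; its only subtlety is correctly identifying $\psi^*(\cdot)$ with the Bregman divergence via Fact~\ref{fact::KL_equiv_psi_star}. The lower bound is the more delicate piece: I need to (i) construct $\theta_1$ on the appropriate scale within the open parameter space, using smoothness of $B$ to calibrate the KL divergence to exactly $(\log 2)/n$; (ii) carefully apply the local triangle inequality, which is one-sided and depends on the ordering of the arguments in $\sqrt{D_{\psi_\mu^*}}$, to relate the testing error to the estimation loss; and (iii) track the multiplicative constants from Bretagnolle--Huber and from the LTI to arrive at the stated constant $M\log 2/16$. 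Keeping the constants tight under the asymmetry of the Bregman divergence is where the bookkeeping is most easily botched.
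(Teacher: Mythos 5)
Your proposal is correct and follows essentially the same route as the paper: the upper bound is the identical Chernoff-plus-layer-cake computation via the identity $D_{\psi_\mu^*}(\hat{\mu},\mu)=\psi^*(\hat{\mu}-\mu)$, and the lower bound is the same two-point Le Cam reduction with $nD_{KL}=\log 2$ and the local triangle inequality, which the paper simply packages as a modified Tsybakov Theorem 2.2 rather than re-deriving via Bretagnolle--Huber and an explicit plug-in test. The only caveat is the constant bookkeeping you already flag: the paper applies its lemma with the loss $\ell=D_{\psi_\mu^*}$ itself (yielding the factor $M$, not $M^2$), a point on which the paper is no more careful than you are.
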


 Proposition~\ref{prop::minimax_optimality} provides the inspiration for the results of the subsequent sections, where we will establish various upper bounds on both normalized and unnormalized versions the Bregman divergence risk under various degrees of adaptivity. Specifically, starting with the simple settings of a fixed target arm at a stopping time, we derive a tight upper bound  on the unnormalized Bregman risk based on the effective sample. Then we move to the fully adaptive setting and show that by inducing a small ``adaptive normalizing factor'' in a log-log scale, we can extend the bound~\eqref{eq::upper_bound_sub-psi} on the normalized risk of the sample mean to the fully adaptive setting.

\subsection{Bregman divergence risk bounds for a fixed target arm at a stopping time} \label{subSec::fixed_target}
 Recall that for each $k \in [\K]$, the \textit{effective sample size} for arm $k$ is defined as $n_k^{\eff} := \left[\mathbb{E}\left[1/N_k(\Tau)\right]\right]^{-1}$. Similarly, for any $r > 1$, the $r$-th order effective sample size is defined as 
 $n_k^{\eff,r} := \left[\mathbb{E}\left[1/N_k^r(\Tau)\right]\right]^{-1/r}$.
   Our next result exhibits a general Bregman risk bound  that depends on the effective sample size. 

\begin{theorem}\label{thm::bound_on_Bregman}
	Consider an adaptive sampling algorithm and stopping rule, and a fixed arm $k$. If there exists a time $t_0$ such that $\Tau \geq t_0$ and $N_k(t_0) \geq b > 0$ almost surely, then the risk of $\hat{\mu}_k(\Tau)$  is bounded as
	\begin{equation} \label{eq::bound_on_Bregman}
		\mathbb{E} \left[D_{\psi_{\mu_k}^*} (\hat{\mu}_k(\Tau), \mu_k)\right]  \leq \min\left\{2e\frac{1 + \log (n_k^{\eff} / b) }{n_k^{\eff}}, \inf_{r>1} \frac{ C_{r}}{n_k^{\eff, r}}\right\},
	\end{equation}
	where, for any $r>1$, 
	\begin{equation}
	    C_{r} := \inf_{q \in (1,r)}\frac{2^{q/r}}{e} \frac{r^2}{(r-q)(q-1)}.
	\end{equation}
	In particular, $C_r \rightarrow \infty$ as $r \to 1$.
\end{theorem}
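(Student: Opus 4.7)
The plan is to reduce the Bregman risk to the expected supremum of a family of exponential supermartingales, and then to control that supremum by two complementary techniques that produce the two terms in the minimum. To start, for each $\lambda\in\Lambda$ I would define
\begin{equation*}
M_t(\lambda) := \exp\bigl(\lambda(S_k(t)-\mu_k N_k(t)) - \psi(\lambda) N_k(t)\bigr),\qquad t\geq 0.
\end{equation*}
Because $A_{t+1}$ is $\F_t$-measurable while $Y_{t+1}$ is conditionally independent of $\F_t$ given $A_{t+1}$, and $P_k$ is sub-$\psi$, a one-step computation shows that $\{M_t(\lambda)\}_{t\geq 0}$ is a nonnegative supermartingale with $M_0(\lambda)=1$; optional stopping (with a routine truncation to cover unbounded $\Tau$) yields $\E[M_\Tau(\lambda)]\leq 1$ for every $\lambda\in\Lambda$. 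The bridge to the divergence is Fact~\ref{fact::KL_equiv_psi_star} together with the definition of the convex conjugate, which gives the pathwise identity
\begin{equation*}
N_k(\Tau)\,D_{\psi_{\mu_k}^*}\!\bigl(\hat\mu_k(\Tau),\mu_k\bigr) \;=\; N_k(\Tau)\,\psi^*(\hat\mu_k(\Tau)-\mu_k) \;=\; \sup_{\lambda\in\Lambda}\log M_\Tau(\lambda),
\end{equation*}
so the whole problem reduces to controlling $\sup_\lambda \log M_\Tau(\lambda)$ well enough to divide by $N_k(\Tau)$ in expectation.

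For the bound $2e(1+\log(n_k^{\eff}/b))/n_k^{\eff}$, I would use a mixture of supermartingales: picking a prior $\pi$ on $\Lambda$ whose scale is tied to the minimum sample size $b$, the mixture $\int M_t(\lambda)\,d\pi(\lambda)$ is itself a nonnegative supermartingale starting at $1$, and a Laplace-type lower bound shows that it is at least a $\sqrt{b/N_k(\Tau)}$-multiple of $\sup_\lambda M_\Tau(\lambda)$. Combining optional stopping of the mixture with Markov's inequality and integrating the resulting tail gives
\begin{equation*}
\E\bigl[\psi^*(\hat\mu_k(\Tau)-\mu_k)\bigr] \;\leq\; 2e\,\E\!\left[\frac{1+\log(N_k(\Tau)/b)}{N_k(\Tau)}\right].
\end{equation*}
Setting $u = 1/N_k(\Tau)\in(0,1/b]$, one checks that $h(u):=u(1-\log(bu))$ satisfies $h''(u)=-1/u<0$, so $h$ is concave on $(0,\infty)$. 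Jensen's inequality then bounds the last expectation by $h(\E[1/N_k(\Tau)]) = (1+\log(n_k^{\eff}/b))/n_k^{\eff}$, delivering the first term.

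For the bound $\inf_{r>1}C_r/n_k^{\eff,r}$, I would instead apply Hölder's inequality at paired exponents tied to $(q,r)$ with $1<q<r$:
\begin{equation*}
\E\bigl[\psi^*(\hat\mu_k(\Tau)-\mu_k)\bigr] \;=\; \E\!\left[\frac{\sup_\lambda \log M_\Tau(\lambda)}{N_k(\Tau)}\right]\;\leq\;\Bigl(\E\bigl[(\sup_\lambda\log M_\Tau(\lambda))^{\alpha}\bigr]\Bigr)^{1/\alpha}\bigl(\E[N_k(\Tau)^{-r}]\bigr)^{1/r},
\end{equation*}
with the Hölder exponent $\alpha=\alpha(q,r)$ arranged so the second factor is exactly $1/n_k^{\eff,r}$. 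The first factor I would control through a Doob-type $L^q$ maximal inequality applied to the mixture supermartingale from the previous paragraph --- this is where the second exponent $q$ enters --- yielding a constant of the form $(r/(r-q))(q/(q-1))$ times the multiplicative overhead $2^{q/r}/e$ coming from the Laplace step. After matching exponents one recovers $C_r = \inf_{q\in(1,r)} \frac{2^{q/r}}{e}\frac{r^2}{(r-q)(q-1)}$, and taking the minimum of the two bounds finishes the proof.

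The principal obstacle is the passage from the pointwise bound $\E[M_\Tau(\lambda)]\leq 1$ to uniform control of $\sup_\lambda M_\Tau(\lambda)$: the optimizing tilt $\lambda^\star = \psi^{*\prime}(\hat\mu_k(\Tau)-\mu_k)$ is itself a function of the stopped, data-dependent process, so no direct plug-in works and a naive grid-based union bound over $\lambda$ loses too much. The mixture-of-priors / Laplace-approximation argument is what identifies the minimal overhead --- a $\log(N_k(\Tau)/b)$ correction --- one must pay for this uniformity, and this overhead is precisely what appears additively in the first bound and what drives the blow-up of $C_r$ as $q\to 1^+$ or $q\to r^-$ in the second. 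A secondary difficulty is verifying optional stopping for the unbounded $\Tau$ (handled by the usual bounded-truncation + Fatou argument) and confirming the concavity/Jensen step on the domain $(0,1/b]$ that converts the pathwise bound into one in terms of $n_k^{\eff}$ alone.
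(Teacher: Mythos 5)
Your opening moves (the exponential supermartingale $M_t(\lambda)$, optional stopping via truncation and Fatou, and the pathwise identity $N_k(\Tau)\,D_{\psi^*_{\mu_k}}(\hat{\mu}_k(\Tau),\mu_k)=\sup_{\lambda\in\Lambda}\log M_\Tau(\lambda)$) agree with the paper's Claim~\ref{claim::martingale_ineq}, and your Jensen step for $h(u)=u(1-\log(bu))$ is correct. The gap lies in the central step: controlling $\sup_\lambda M_\Tau(\lambda)$ by a mixture supermartingale. First, the claimed lower bound $\int M_\Tau(\lambda)\,d\pi(\lambda)\gtrsim\sqrt{b/N_k(\Tau)}\,\sup_\lambda M_\Tau(\lambda)$ is a sub-Gaussian heuristic: for a general CGF-like $\psi$ on a bounded interval $\Lambda$, the Laplace factor is of order $\pi(\lambda^\star)\sqrt{2\pi/\bigl(N_k(\Tau)\psi''(\lambda^\star)\bigr)}$, and neither $\psi''(\lambda^\star)$ nor the prior mass near $\lambda^\star$ (which may sit at the boundary of $\Lambda$) is uniformly controlled, so no universal constant such as $2e$ emerges. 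Second, and more decisively, even if you grant a tail bound of the form $\mathbb{P}\bigl(N_k(\Tau)\psi^*(\hat{\mu}_k(\Tau)-\mu_k)\ge\delta+\tfrac12\log(N_k(\Tau)/b)+c\bigr)\le e^{-\delta}$, it does not yield $\mathbb{E}[\psi^*]\le 2e\,\mathbb{E}\bigl[(1+\log(N_k(\Tau)/b))/N_k(\Tau)\bigr]$: the threshold is itself random, and the residual $\bigl(N_k(\Tau)\psi^*-\tfrac12\log(N_k(\Tau)/b)-c\bigr)_+$ must be divided by the \emph{random} $N_k(\Tau)$ before taking expectations. Without access to the joint law, H\"older gives only $O(1/n_k^{\eff,r})$ for $r>1$, or the much weaker $O(1/b)$ --- never the first-order $O\bigl(\log(n_k^{\eff})/n_k^{\eff}\bigr)$ that the first term of \eqref{eq::bound_on_Bregman} asserts. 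A similar problem afflicts your second bound: a supermartingale controls only the first moment at $\Tau$, so the Doob $L^q$ step and the moment $\mathbb{E}\bigl[(\sup_\lambda\log M_\Tau(\lambda))^{\alpha}\bigr]$ are not bounded by constants depending only on $(q,r)$.

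The obstacle you call principal --- uniformity over $\lambda$ --- is in fact a non-issue at a stopping time, and the paper never confronts it. Since $\psi^*$ is convex with minimum $0$ at $0$, the event $\bigl\{\psi^*(\hat{\mu}_k(\Tau)-\mu_k)\ge\delta\bigr\}$ equals the union of the two one-sided events $\bigl\{\hat{\mu}_k(\Tau)-\mu_k\ge\epsilon_1\bigr\}$ and $\bigl\{\hat{\mu}_k(\Tau)-\mu_k\le-\epsilon_2\bigr\}$ for \emph{deterministic} $\epsilon_1,\epsilon_2$ determined by $\delta$; each is a linear threshold handled by a single fixed-$\lambda$ Chernoff bound at $\Tau$ combined, via H\"older, with the supermartingale (this is the source of the factor $2$). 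Crucially, the resulting Lemma~\ref{lemma::deviation_ineq} retains the law of $N_k(\Tau)$ inside the bound, $\mathbb{P}(D\ge\delta)\le 2\bigl[\mathbb{E}e^{-(q-1)\delta N_k(\Tau)}\bigr]^{1/q}$. Integrating this tail in $\delta$, rewriting it as an integral against an exponential density of rate $b/(ep)$, and applying Jensen to the concave map $x\mapsto x^{1/q}$ allows the $\delta$-integral to be computed inside the expectation, producing $\bigl[\mathbb{E}(1/N_k(\Tau))\bigr]^{1/q}$ and, after optimizing over $p$, the term $2e\bigl(1+\log(n_k^{\eff}/b)\bigr)/n_k^{\eff}$; the second term follows from truncating the same tail integral at a level $\epsilon$ and using $\sup_{\tau>0}\tau^{qr}e^{-q\tau N/p}=(pr/N)^{qr}e^{-qr}$. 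To repair your outline you should replace the mixture/Laplace step with this two-sided decomposition and the exponential-weight Jensen device.
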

Note that the bound in \eqref{eq::bound_on_Bregman} is always non-negative since $n_k^{\eff} \geq b$ by assumption. Further, if we always begin by sampling every arm once, then we may take $t_0=\K$ and $b=1$. Of course, if we can choose a larger $b$, then the bound will be stronger.
The proof of Theorem~\ref{thm::bound_on_Bregman} can be found in
Appendix~\ref{subSec::proof_of_bounds_on_Bregman} and is based on the following deviation inequality for the unnormalized Bregman divergence loss, which is proved in Appendix~\ref{subSec::proof_of_deviation_ineq}.
\begin{lemma} \label{lemma::deviation_ineq}
	Under the assumptions in Theorem~\ref{thm::bound_on_Bregman} we have that, for any $\delta \geq 0$,
	\begin{equation} \label{eq::deviation_ineq}
		\mathbb{P}\left(D_{\psi^*_{\mu_k}}(\hat{\mu}_k(\Tau) , \mu_k) \geq \delta \right) 
		\leq  2 \inf_{q \geq 1} \left[ \mathbb{E} e^{-(q-1)\delta N_k(\Tau)}\right]^{1/q} \leq 2 e^{-\delta b}.
	\end{equation}
\end{lemma}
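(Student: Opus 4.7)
The plan is a Chernoff-plus-Hölder argument built on the canonical exponential supermartingale of the sub-$\psi$ framework. For each $\lambda \in \Lambda$, set
\[
M_t^\lambda := \exp\left\{\lambda(S_k(t) - \mu_k N_k(t)) - N_k(t)\,\psi(\lambda)\right\}, \qquad t \geq 0.
\]
Because the filtration of Section~\ref{sec::acda_setting} already makes $A_t$ measurable with respect to $\mathcal{F}_{t-1}$, while the conditional law of $Y_t$ given $\{A_t=k, \mathcal{F}_{t-1}\}$ equals $P_k$, the sub-$\psi$ hypothesis yields
\[
\mathbb{E}\bigl[M_t^\lambda/M_{t-1}^\lambda \mid \mathcal{F}_{t-1}\bigr] = \mathbbm{1}(A_t \neq k) + \mathbbm{1}(A_t = k)\, e^{-\psi(\lambda)}\,\mathbb{E}_{Y\sim P_k}\bigl[e^{\lambda(Y-\mu_k)}\bigr] \leq 1,
\]
so $M^\lambda$ is a nonnegative supermartingale with $M_0^\lambda=1$. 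A Fatou argument applied to the stopped process $M^\lambda_{\Tau\wedge n}$ then gives the optional-stopping inequality $\mathbb{E}[M_\Tau^\lambda] \leq 1$ for every $\lambda \in \Lambda$.

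By Fact~\ref{fact::KL_equiv_psi_star}, $D_{\psi_{\mu_k}^*}(\hat{\mu}_k(\Tau),\mu_k) = \psi^*(\hat{\mu}_k(\Tau) - \mu_k)$. For fixed $\delta > 0$, pick deterministic thresholds $a_+ > 0 > a_-$ with $\psi^*(a_\pm) = \delta$ (if $\psi^*$ fails to reach $\delta$ on one side, the corresponding tail event is empty and may be dropped from the union bound) and let $\lambda_+ > 0$ and $\lambda_- < 0$ be the deterministic maximizers in $\psi^*(a_\pm) = \sup_{\lambda\in\Lambda}[\lambda a_\pm - \psi(\lambda)]$. On the event $\{\hat{\mu}_k(\Tau) - \mu_k \geq a_+\}$, the monotonicity in $z$ of $z\mapsto \lambda_+ z - \psi(\lambda_+)$ (using $\lambda_+>0$) gives $\lambda_+(\hat{\mu}_k(\Tau)-\mu_k) - \psi(\lambda_+) \geq \lambda_+ a_+ - \psi(\lambda_+) = \delta$; multiplying by $N_k(\Tau) \geq b > 0$ yields $M_\Tau^{\lambda_+} \geq e^{\delta N_k(\Tau)}$, and the symmetric statement holds on the lower tail with $\lambda_-$.

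The concluding step is Markov plus Hölder with a free exponent $r > 1$. The pointwise bound $\mathbbm{1}_{\{X \geq 1\}} \leq X^{1/r}$ applied to $X = M_\Tau^{\lambda_+} e^{-\delta N_k(\Tau)}$, together with Hölder's inequality using conjugate exponents $r$ and $r/(r-1)$, gives
\[
\mathbb{P}\left(\hat{\mu}_k(\Tau) - \mu_k \geq a_+\right) \leq \mathbb{E}\bigl[(M_\Tau^{\lambda_+})^{1/r} e^{-\delta N_k(\Tau)/r}\bigr] \leq \mathbb{E}[M_\Tau^{\lambda_+}]^{1/r}\, \mathbb{E}\bigl[e^{-\delta N_k(\Tau)/(r-1)}\bigr]^{(r-1)/r}.
\]
Using $\mathbb{E}[M_\Tau^{\lambda_+}] \leq 1$, adding the identical bound for the lower tail via a union bound, and reparametrizing via $q = r/(r-1)$ (so $1/(r-1) = q-1$ and $(r-1)/r = 1/q$) produce exactly $2\,[\mathbb{E} e^{-(q-1)\delta N_k(\Tau)}]^{1/q}$; infimizing over $q > 1$ (the $q = 1$ case being the trivial bound $2$) establishes the first inequality. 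The second inequality is then immediate: since $N_k(\Tau) \geq b$ almost surely, $\mathbb{E}[e^{-(q-1)\delta N_k(\Tau)}] \leq e^{-(q-1)\delta b}$, so the bound is at most $2\,e^{-(q-1)\delta b/q}$, which decreases to $2\,e^{-\delta b}$ as $q \to \infty$.

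The main technical obstacles are mild: handling possibly unbounded $\Tau$ in the optional-stopping step (resolved by the Fatou trick on $M^\lambda_{\Tau \wedge n}$) and the edge case in which a maximizer $\lambda_\pm$ lies on the boundary of $\Lambda$ (resolved by approaching it with interior $\lambda$'s and passing to the limit). The conceptual heart is the Hölder split with the free parameter $r$, which converts the random threshold $e^{\delta N_k(\Tau)}$ produced by the Chernoff step into the controlled Laplace-transform-style term in $N_k(\Tau)$ advertised in the statement.
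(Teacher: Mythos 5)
Your proposal is correct and follows essentially the same route as the paper: the exponential supermartingale $L_t^k(\lambda)$ with optional stopping, a two-sided Chernoff bound obtained from the convexity of $\psi^*$ around $0$ via Fact~\ref{fact::KL_equiv_psi_star}, and a H\"older split with a free conjugate exponent that turns the random threshold $e^{\delta N_k(\Tau)}$ into the term $2\bigl[\mathbb{E} e^{-(q-1)\delta N_k(\Tau)}\bigr]^{1/q}$. The only (immaterial) difference is bookkeeping: the paper introduces the H\"older exponent by splitting off $\exp\{-\tfrac{N_k(\Tau)}{p}\psi(p\lambda)\}$ and optimizing over $\lambda\in[0,\lambda_{\max}/p)$, whereas you fix the optimal $\lambda_+$ for $\psi^*$ up front and apply $\mathbbm{1}_{\{X\geq 1\}}\leq X^{1/r}$ before H\"older — both yield the identical bound.
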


We remark that the results in \cite{caballero1998estimation, pena2008self} imply similar deviation inequalities and moment bounds for sub-Gaussian arms. The bound in Lemma~\ref{lemma::deviation_ineq} can be viewed as a generalization to sub-$\psi$ arms.


Using Jensen's inequality and the convexity of Bregman divergences, it is almost immediate to convert the risk bound \eqref{eq::bound_on_Bregman} into a bound on  the expected $\ell_1$ loss, and on the bias $|\hat{\mu}_k(\Tau) - \mu_k|$. A minor complication arises due to the fact that the function $\psi^*$ is not invertible around $0$. Instead, we consider two invertible variants of $\psi^* $, both defined on $\Lambda^* \cap [0,\infty)$ and taking values in $[0, \infty)$:
\[
z \mapsto \psi_+^* (z) = \psi^*(z) \quad  \text{and} \quad  z \mapsto \psi_{-}^*(z) = \psi^*(-z).
\]

\begin{corollary} \label{cor::bound_on_bias_l1_risk}
	Suppose the assumptions in Theorem~\ref{thm::bound_on_Bregman} hold. For each $k \in [\K]$ and $b >0$, define 
	\begin{equation}
	U_{k,b} := \min\left\{2e\frac{1 + \log (n_k^{\eff} / b) }{n_k^{\eff}}, \inf_{r>1} \frac{ C_{r}}{n_k^{\eff, r}}\right\}.
	\end{equation}
    Then, the bias of the sample mean is bounded as
	\begin{equation} \label{eq::bound_on_bias}
		- {\psi_-^*}^{-1}\left( U_{k,b}\right) \leq	\mathbb{E}\left[\hat{\mu}_k(\Tau)\right]   - \mu_k\leq  {\psi_+^*}^{-1}\left(U_{k,b}\right).
	\end{equation}	
	Furthermore, if $\psi^*$ is symmetric around zero, then the  $\ell_1$ risk can be bounded as 
	\begin{equation} \label{eq::bound_on_l1_risk}
		\mathbb{E}\left|\hat{\mu}_k(\Tau) - \mu_k\right| \leq {\psi_+^*}^{-1}\left( U_{k,b}\right).
	\end{equation}
\end{corollary}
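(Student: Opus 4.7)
The plan is to reduce everything to Theorem~\ref{thm::bound_on_Bregman} via Jensen's inequality applied to the convex function $\psi^{*}$. By Fact~\ref{fact::KL_equiv_psi_star} we have $D_{\psi_{\mu_k}^*}(\hat{\mu}_k(\Tau), \mu_k) = \psi^{*}(\hat{\mu}_k(\Tau) - \mu_k)$, so Theorem~\ref{thm::bound_on_Bregman} can be restated as
\[
  \mathbb{E}\bigl[\psi^{*}(\hat{\mu}_k(\Tau) - \mu_k)\bigr] \leq U_{k,b}.
\]
Since $\psi$ is CGF-like with $\psi(0)=\psi'(0)=0$, its convex conjugate satisfies $\psi^{*}(0)=0$, $\psi^{*\prime}(0)=0$, and $\psi^{*}$ is strictly convex. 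Consequently $\psi^{*}$ attains its global minimum at $0$, and the two ``half-branches'' $\psi_{+}^{*}$ and $\psi_{-}^{*}$ are each strictly increasing bijections from $\Lambda^{*}\cap[0,\infty)$ onto their ranges in $[0,\infty)$, so they admit well-defined inverses.

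For the bias bound, I would apply Jensen's inequality to the convex function $\psi^{*}$:
\[
  \psi^{*}\!\bigl(\mathbb{E}[\hat{\mu}_k(\Tau)] - \mu_k\bigr) \;\leq\; \mathbb{E}\bigl[\psi^{*}(\hat{\mu}_k(\Tau) - \mu_k)\bigr] \;\leq\; U_{k,b}.
\]
Let $\Delta_k := \mathbb{E}[\hat{\mu}_k(\Tau)]-\mu_k$. If $\Delta_k \geq 0$, then $\psi_{+}^{*}(\Delta_k) = \psi^{*}(\Delta_k) \leq U_{k,b}$, and applying the strictly increasing inverse gives $\Delta_k \leq {\psi_{+}^{*}}^{-1}(U_{k,b})$. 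If $\Delta_k < 0$, then $\psi_{-}^{*}(-\Delta_k) = \psi^{*}(\Delta_k) \leq U_{k,b}$, so $-\Delta_k \leq {\psi_{-}^{*}}^{-1}(U_{k,b})$, i.e. $\Delta_k \geq -{\psi_{-}^{*}}^{-1}(U_{k,b})$. Combining the two cases yields \eqref{eq::bound_on_bias}.

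For the $\ell_1$ bound, under the symmetry assumption $\psi^{*}(z) = \psi^{*}(-z)$ we have $\psi^{*}(z) = \psi_{+}^{*}(|z|)$ for all admissible $z$. Since $\psi_{+}^{*}$ is convex and nondecreasing on $[0,\infty)$, Jensen's inequality applied to $|\hat{\mu}_k(\Tau)-\mu_k|$ gives
\[
  \psi_{+}^{*}\!\bigl(\mathbb{E}\bigl|\hat{\mu}_k(\Tau)-\mu_k\bigr|\bigr) \leq \mathbb{E}\bigl[\psi_{+}^{*}\bigl(|\hat{\mu}_k(\Tau)-\mu_k|\bigr)\bigr] = \mathbb{E}\bigl[\psi^{*}(\hat{\mu}_k(\Tau)-\mu_k)\bigr] \leq U_{k,b},
\]
and inverting $\psi_{+}^{*}$ delivers \eqref{eq::bound_on_l1_risk}.

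There is essentially no hard step here; the only subtlety is that $\psi^{*}$ itself is not invertible on a neighborhood of the origin because it has a minimum there, which is exactly why the statement is phrased in terms of the one-sided inverses ${\psi_{+}^{*}}^{-1}$ and ${\psi_{-}^{*}}^{-1}$. Ensuring $\Delta_k$ lies in the domain on which each half-branch is invertible (i.e., $U_{k,b}$ is in the range of $\psi^{*}$) is implicit in the sub-$\psi$ setup: since $\hat{\mu}_k(\Tau)-\mu_k$ takes values in the domain of $\psi^{*}$ almost surely and $\psi^{*}$ blows up at the boundary of $\Lambda^{*}$, $U_{k,b}$ lies in the range whenever the bound is nontrivial.
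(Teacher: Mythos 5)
Your proposal is correct and follows essentially the same route as the paper's proof: restate Theorem~\ref{thm::bound_on_Bregman} via Fact~\ref{fact::KL_equiv_psi_star} as a bound on $\mathbb{E}[\psi^*(\hat{\mu}_k(\Tau)-\mu_k)]$, apply Jensen's inequality to the convex function $\psi^*$, split on the sign of the bias to use the appropriate one-sided inverse, and use symmetry plus monotonicity of $\psi_+^*$ for the $\ell_1$ bound. No gaps.
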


\noindent The proof  is a direct consequence of the Jensen's inequality; see Appendix~\ref{subSec::cor_bound_on_bias_l1_risk}  As one explicit example, if the underlying distribution is sub-Gaussian, $\psi_+^{*-1}(l) = \sigma\sqrt{2l}$ and the $\ell_1$ risk of the sample mean is bounded as
\begin{equation}
	\label{eq::SG-bias-bound}
	\mathbb{E}\left|\hat{\mu}_k(\Tau) - \mu_k\right| \leq \sigma\sqrt{2U_{k,b}}= \sigma \min\left\{\sqrt{4e\frac{1 + \log (n_k^{\eff} / b) }{n_k^{\eff}}}, \inf_{r>1} \sqrt{\frac{ 2C_r}{n_k^{\eff, r}}} \right\}.
\end{equation}
 We remark that the above bound on the bias is not improvable beyond the log factor in general by using the following stopped Brownian motion example \cite[Ch. 3]{siegmund1978estimation}.
\begin{example}\label{eg::brownian-stopping}
Let $W(t)$ be a Wiener process. If we define a stopping time as the first time $W(t)$ exceeds a line with slope $\eta$ and intercept $b>0$, that is $\Tau_B :=\inf\{t\geq 0: W(t) \geq \eta t + b \}$, then for any slope $\eta \leq \mu$, we have $\mathbb{E}\left[\frac{W(\Tau_B)}{\Tau_B} - \mu\right] = 1/b$.
\end{example}
 
 Note that a sum of Gaussians with mean $\mu$ behaves like a time-discretization of a Brownian motion with drift $\mu$; since $\mathbb{E}W(t) = t\mu$, we may interpret $W(\Tau_B)/\Tau_B$ as a stopped sample mean, and the last equation implies that its bias is $1/b$ for any  slope $\eta \leq \mu$. In particular, if we set $\eta = \mu$, it is easy to deduce that $\E[1/\Tau_B]=1/b^2$ and thus that
\[
1 / n^\eff = \mathbb{E}\left[1  / \Tau_B\right] = 1/ b^2.
\]
As a result, the bias of $W_{\Tau_B}/ \Tau_B$ as a stopped sample mean is exactly equal to $\sqrt{1/n^\eff}$ which matches \eqref{eq::SG-bias-bound} up to a log factor.

We end this section by discussing whether tight risk bounds can be obtained based on $\mathbb{E}N_k(\Tau)$. By Jensen's inequality, it can be easily checked that $n_k^\eff \leq \mathbb{E}N_k(\Tau)$. One may wonder if it is possible to obtain tighter  bounds on both the bias and the risk that scale with $1/\mathbb{E}N_k(\Tau)$ instead of $n_k^\eff$. However, it can be easily seen that this is not possible in general. For instance, in the previous stopped Brownian motion case with $\eta = \mu$, the bias is equal to $1 / b = 1/\sqrt{n^\eff} >0$. However, under the same setting, it is well-known that $\mathbb{E}N_k(\Tau)=\infty$. Therefore, the bias (namely $1/b$) can never be bounded by $1/\mathbb{E}N(\Tau)=0$. Also, a risk bound in terms of $1/\mathbb{E}N_k(\Tau)$ would imply consistency whenever $\mathbb{E}N_k(\Tau) \to \infty$, but Example~\ref{eg::counter_eg-consistency} shows that $\hat{\mu}_k$ can be inconsistent even when $\mathbb{E}N_k(\Tau) \to \infty$.

\subsection{Bregman divergence risk bounds under fully adaptive settings}

Let $I(\kappa ; \D_{\Tau})$ be the mutual information between $\kappa$ and the dataset $\D_{\Tau}$.
When the dataset $\D_{\Tau}$ is collected in a deterministic manner, \cite{russo2016controlling} showed how to bound the bias and expected $\ell_1$ and $\ell_2$ loss of adaptively chosen centered sub-Gaussian random variables by using $I(\kappa ; \D_{\Tau})$. In particular, if each $\widehat{\mu}_k - \mu_k$ has mean zero and is $(\sigma / \sqrt{n})$-sub-Gaussian, then \cite{russo2016controlling} proved that 
\begin{align} 
	\sqrt{n}\left| \mathbb{E}\widehat{\mu}_\kappa - \mu_\kappa  \right| &\leq \sigma \sqrt{2I(\kappa ; \D_{\Tau})},  \label{eq::Russo_bound_on_bias}\\
	\mathbb{E}\left[\sqrt{n}\left| \widehat{\mu}_\kappa - \mu_\kappa  \right|\right] &\leq \sigma\left(1 +c_1\sqrt{2I(\kappa ; \D_{\Tau})}\right)    \label{eq::Russo_bound_on_l1}\\
	\mathbb{E}\left[n\left( \widehat{\mu}_\kappa - \mu_\kappa  \right)^2\right] &\leq \sigma^2\left(1.25  +c_2 I(\kappa ; \D_{\Tau})\right) , \label{eq::Russo_bound_on_l2}
\end{align}
where $c_1 <36$ and $c_2 \leq 10$ are universal constants. 

For the reasons discussed in Section~\ref{sec::risk_under_finite_moment}, however, these bounds are not directly applicable to the fully adaptive setting since each $\hat{\mu}_k -\mu_k$ is no longer centered, due to the bias caused by adaptive sampling, stopping and rewinding. In particular, the bound for the bias given in equation \eqref{eq::Russo_bound_on_bias}  no longer holds under the fully adaptive setting because the bias can be non-zero even if $\kappa$ is independent of $\D_{\Tau}$. 

In this subsection we show that, by introducing an additional small ``penalty for adaptivity'', measured on the log-log scale, the bounds for deterministic and nonadaptive sampling and stopping can be basically extended to the fully adaptive setting.
Towards that end, and assuming that $N_k(t_0) > 3$ for all $k \in [\K]$, we set
\begin{equation}
    \dbtilde{N}_k(t) := \frac{N_k(t)}{\log\log N_k(t)},~~\forall k \in [\K], \forall t \geq t_0.
\end{equation}
We now present the main result of this section.

\begin{theorem} \label{thm::bounds_in_fully_adap}
    For any adaptive sampling and stopping rule and any adaptively chosen arm $\kappa$, the  risk of $\hat{\mu}_\kappa(\tau)$ at any adaptively rewound time $\tau \leq \Tau$ such that, for some deterministic time $t_0>0$ and a constant $b \geq 3$, $\tau\geq t_0$ and $N_k(t_0) \geq b$  almost surely, it holds that
	\begin{align}\label{eq::adap_risk}
	\mathbb{E} \left[\dbtilde{N}_\kappa(\tau) D_{\psi^*_{\mu_{\kappa}}}\left(\hat{\mu}_{\kappa}(\tau), \mu_{\kappa} \right) \right]
	&\leq C_b \left[I\left(\kappa;\D_{\Tau}\right) + 1.25\right],
	\end{align}
	where $C_b := 4e\left(1 + \frac{1}{\log\log b}\right)$.
\end{theorem}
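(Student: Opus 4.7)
The plan is to combine two ingredients: (i) a time-uniform exponential concentration inequality for the Bregman loss of a single fixed arm, normalized on a $\log\log$ scale, handling adaptive sampling, stopping and rewinding; and (ii) the Donsker--Varadhan variational representation of the mutual information, which transfers the fixed-arm MGF bound to an adaptively chosen $\kappa$ at the cost of the additive term $I(\kappa;\D_{\Tau})$. The $\log\log N_k(t)$ normalization and the explicit constant $C_b=4e(1+1/\log\log b)$ strongly suggest a LIL-type mixture-supermartingale argument, while the additive $1.25$ is precisely the constant that comes out of controlling the log-MGF of a sub-exponential-like quantity, in the spirit of \citet{russo2016controlling}.

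\textbf{Step 1 (fixed-arm supremum MGF).}  The core estimate I would prove first is that, for every fixed $k\in[\K]$,
\[
\E\!\left[\exp\!\left(\tfrac{1}{C_b}\sup_{t_0\le t\le \Tau}\dbtilde{N}_k(t)\,D_{\psi^*_{\mu_k}}\!\bigl(\hat\mu_k(t),\mu_k\bigr)\right)\right]\le e^{1.25}.
\]
By Fact~\ref{fact::KL_equiv_psi_star} and Bregman duality,
\[
 N_k(t)\,D_{\psi^*_{\mu_k}}\!\bigl(\hat\mu_k(t),\mu_k\bigr)=\sup_{\lambda\in\Lambda}\bigl[\lambda(S_k(t)-\mu_k N_k(t))-\psi(\lambda)N_k(t)\bigr]=\log\sup_{\lambda}M_\lambda(t),
\]
where $M_\lambda(t):=\exp\{\lambda(S_k(t)-\mu_k N_k(t))-\psi(\lambda)N_k(t)\}$ is a nonnegative supermartingale of unit initial value, by the sub-$\psi$ assumption together with the predictability of $\{A_s\}$. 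Mixing $M_\lambda(t)$ against a discrete measure on a geometric grid of inverse-scale parameters, and then applying Ville's inequality in combination with a peeling over dyadic blocks $\{b,2b,4b,\ldots\}$ of $N_k(t)$, yields a tail bound of the LIL form
\[
\P\!\left(\sup_{t\ge t_0}\dbtilde{N}_k(t)\,D_{\psi^*_{\mu_k}}\!\bigl(\hat\mu_k(t),\mu_k\bigr)\ge C_b\,u\right)\le e^{-u}
\]
valid for $u$ above a small threshold; the dyadic peeling is what forces the $(1+1/\log\log b)$ inflation in $C_b$, since each scale change loses a multiplicative $\log\log$ factor. Integrating this tail via $\E[e^{sY}]=\int_0^{\infty}se^{su}\P(Y>u)\,du$ with $s$ matched to $1/C_b$ produces the claimed MGF bound, with the pre-threshold integral contributing the $1.25$ constant.

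\textbf{Step 2 (variational transfer to adaptive $\kappa$).}  With the single-arm estimate in hand, set $f(\kappa,\D_\Tau):=C_b^{-1}\dbtilde{N}_\kappa(\tau)\,D_{\psi^*_{\mu_\kappa}}(\hat\mu_\kappa(\tau),\mu_\kappa)$ and invoke the Donsker--Varadhan identity
\[
\E\bigl[f(\kappa,\D_\Tau)\bigr]\le I(\kappa;\D_\Tau)+\log\E_{K\perp\D_\Tau}\!\bigl[e^{f(K,\D_\Tau)}\bigr],
\]
in which $K\sim P_\kappa$ under the product law is independent of $\D_\Tau$. The inner expectation is therefore a $P_\kappa$-average of terms $\E[e^{f(k,\D_\Tau)}]$ for deterministic $k$. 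Since $\tau$ is $\F_{\Tau}$-measurable with $t_0\le\tau\le\Tau$, pointwise
\[
f(k,\D_\Tau)\le C_b^{-1}\sup_{t_0\le t\le \Tau}\dbtilde{N}_k(t)\,D_{\psi^*_{\mu_k}}\!\bigl(\hat\mu_k(t),\mu_k\bigr),
\]
so Step 1 yields $\log\E[e^{f(k,\D_\Tau)}]\le 1.25$ for every $k$, hence $\log\E_{K\perp\D_\Tau}[e^{f(K,\D_\Tau)}]\le 1.25$. Multiplying through by $C_b$ gives \eqref{eq::adap_risk}.

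\textbf{Main obstacle.}  The real technical work sits in Step 1: producing the sharp LIL-type sup-MGF bound with exactly the advertised constant $C_b=4e(1+1/\log\log b)$. Stitching over dyadic scales of $N_k(t)$ typically introduces a $(1+\epsilon)$-type inflation, and arranging $\epsilon=1/\log\log b$ (so that one recovers the iterated-logarithm rate in the large-$b$ limit) requires a careful choice of the mixing measure and a tight accounting of the union-bound budget across scales; the $4e$ prefactor is the analogue of the familiar constant appearing in one-sided self-normalized LIL inequalities. A secondary subtlety is that $\tau$ is \emph{not} a stopping time, which precludes a direct optional-stopping argument and is precisely why Step 1 is formulated as an MGF bound on the whole-trajectory supremum rather than on a stopped value.
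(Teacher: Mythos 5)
Your proposal follows essentially the same route as the paper: a time-uniform LIL-type deviation inequality for a fixed arm (Lemma~\ref{lemma::adaptive_deviation_ineq}), obtained by peeling $N_k(t)$ over geometric scales and applying Ville's inequality to the sub-$\psi$ supermartingales at per-scale tuning parameters $\lambda_j$ with weights summing to one, which is then converted into the exponential-moment bound of Claim~\ref{claim::martingale_adap_ineq} by integrating the tail and transferred to the adaptively chosen arm via Donsker--Varadhan, exactly as in your Step 2. The only cosmetic differences are that the paper peels over powers of $e$ rather than dyadic blocks and uses a weighted union bound of per-scale Ville inequalities rather than a single mixture supermartingale.
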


Note that for the sub-Gaussian case, the inequality~\eqref{eq::adap_risk} is reduced to the following bound on the normalized $\ell_2$ risk.
\begin{equation}\label{eq::adap_risk_l2}
	\mathbb{E} \left[\dbtilde{N}_\kappa(\tau) \left(\hat{\mu}_{\kappa}(\tau)-\mu_{\kappa} \right)^2 \right]
	\leq 2 C_b \sigma^2 \left[I\left(\kappa;\D_{\Tau}\right) + 1.25\right].
\end{equation}
By comparing the above bound with the bound~\eqref{eq::Russo_bound_on_l2} of  \cite{russo2016controlling}, we can notice that our bound \eqref{eq::adap_risk} under the fully adaptive setting only suffers a multiplicative normalization term which is of order $\log\log N_{\kappa}(\tau)$. Also, for a fixed target, the following example demonstrates that, in general, the bound~\eqref{eq::adap_risk} cannot be improved upon, aside from constants. 

\begin{example}[Example~\ref{eg::lil_stopping} revisited] \label{eg::lil_stopping_revisit}
In the same setting of Example~\ref{eg::lil_stopping}, we further assume that each arm has a normal distribution variance $\sigma_k^2$. Then, from the definition of the stopping time $\Tau_b$ and the bound~\eqref{eq::adap_risk}, we obtain the following upper and lower bounds on the normalized $\ell_2$ risk for a fixed target:
\begin{equation} \label{eq::upper_lower_bound}
  \sigma_k^2 \leq \mathbb{E} \left[\dbtilde{N}_k(\Tau_b) \left(\hat{\mu}_{k}(\Tau_b)- \mu_{k} \right)^2 \right] \leq 2.5 C_{b} \sigma_k^2,
\end{equation}    
where the upper and lower bounds match up to a constant factor. 
\end{example}

The proof of Theorem~\ref{thm::bounds_in_fully_adap} in Appendix~\ref{subSec::Proof_of_second_them} relies on the following deviation inequality for the normalized Bregman divergence loss, along with the Donsker-Varadhan variational representation of the KL divergence. 

\begin{lemma} \label{lemma::adaptive_deviation_ineq}
	Consider an adaptive sampling algorithm and stopping rule. For a fixed $k \in [\K]$ and a random time $\tau$, assume $N_k(\tau) \geq b$ almost surely. Then, for any $\delta \geq 1$, 
	\begin{equation} \label{eq::adaptive_deviation_ineq}
	\mathbb{P}\left(\dbtilde{N}_k(\tau)D_{\psi^*_{\mu_k}}(\hat{\mu}_k(\tau) , \mu_k) \geq C_b \delta\right)
	\leq 2\exp\left\{-\delta\right\}.
	\end{equation}
\end{lemma}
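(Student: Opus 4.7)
The plan is to establish this anytime-valid deviation inequality by a stitched (peeled) Chernoff--Ville argument. This is the standard way to turn a fixed/stopping-time Chernoff bound (such as the one behind Lemma~\ref{lemma::deviation_ineq}) into a bound valid uniformly over all times, at the price of an additional $\log\log$ normalization factor. Because $\tau$ is only a random time and not a stopping time, Lemma~\ref{lemma::deviation_ineq} cannot be invoked directly; instead, I would apply Ville's inequality to the exponential supermartingales underlying the sub-$\psi$ assumption, simultaneously over a dyadic grid of sample sizes.

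Concretely, for every $\lambda \in \Lambda$ the process
\begin{equation*}
L_t(\lambda) := \exp\left(\lambda \sum_{s=1}^{t}\mathbbm{1}(A_s = k)(Y_s - \mu_k) - \psi(\lambda)N_k(t)\right)
\end{equation*}
is a nonnegative $\{\F_t\}$-supermartingale with $L_0(\lambda) = 1$ (the same martingale already used in Lemma~\ref{lemma::deviation_ineq}). I would partition the time axis into dyadic epochs via the stopping times $\sigma_j := \inf\{t : N_k(t) \geq 2^j\}$ for $j \geq j_0 := \lfloor \log_2 b\rfloor$. Since $N_k(\tau) \geq b$ almost surely, $\tau$ lies in exactly one epoch $[\sigma_j,\sigma_{j+1})$ with $N_k(\tau) \in [2^j,2^{j+1})$.

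On epoch $j$, I would set the threshold $c_j := C_b\,\delta\,\log\log(2^j)/2^{j+1}$; because $\log\log N_k(t)/N_k(t) \geq \log\log(2^j)/2^{j+1}$ throughout the epoch, the target event $\{\dbtilde{N}_k(\tau)\,D_{\psi^*_{\mu_k}}(\hat{\mu}_k(\tau),\mu_k) \geq C_b\delta\}$ restricted to this epoch is contained in $\{\exists\, t \in [\sigma_j,\sigma_{j+1}) : D_{\psi^*_{\mu_k}}(\hat{\mu}_k(t),\mu_k) \geq c_j\}$. Using strict convexity of $\psi^*$ and Fenchel duality, I would pick $z_j^+ > 0 > z_j^-$ with $\psi^*(z_j^{\pm}) = c_j$ and corresponding Legendre-optimal tilts $\lambda_j^{\pm} \in \Lambda$. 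A direct calculation then shows that $\hat{\mu}_k(t) - \mu_k \geq z_j^+$ forces $L_t(\lambda_j^+) \geq \exp(c_j N_k(t)) \geq \exp(c_j 2^j)$ on the epoch, and symmetrically for the lower tail; two applications of Ville's inequality therefore yield
\begin{equation*}
\mathbb{P}\left(\exists\, t \in [\sigma_j,\sigma_{j+1}) : D_{\psi^*_{\mu_k}}(\hat{\mu}_k(t),\mu_k) \geq c_j\right) \leq 2\exp(-c_j 2^j) = 2(j\log 2)^{-C_b\delta/2}.
\end{equation*}

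A union bound over $j \geq j_0$ reduces the problem to verifying $\sum_{j \geq j_0}(j\log 2)^{-C_b\delta/2} \leq e^{-\delta}$ for every $\delta \geq 1$, which is the main technical obstacle. The specific constant $C_b = 4e(1 + 1/\log\log b)$ is tuned so that, after bounding the tail sum by an integral and setting $\alpha := C_b\delta/2$, the exponential factor $(\log_2 b)^{1-\alpha} \asymp \exp((1-\alpha)\log\log b)$ dominates $e^{-\delta}$ while the residual $1/(\alpha - 1)$ stays controlled; the correction term $1/\log\log b$ is inserted precisely to absorb the regime where $\log\log b$ is close to zero, which would otherwise require a much larger constant. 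Every preceding step is routine sub-$\psi$ machinery combined with Ville's inequality; the delicate accounting is confined entirely to this quantitative choice of $c_j$ and $C_b$ that balances the peeling loss against the Chernoff exponent to deliver the clean bound $2e^{-\delta}$.
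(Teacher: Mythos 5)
Your overall architecture is the right one and matches the paper's: since $\tau$ is a random time but not a stopping time, you correctly reduce to a time-uniform statement, peel the sample-size axis into geometric epochs, use Legendre duality to pick a deterministic tilt $\lambda_j^{\pm}$ per epoch and per tail, and apply Ville's inequality to the sub-$\psi$ supermartingale $L_t(\lambda)$. The containment $\{\hat{\mu}_k(t)-\mu_k \geq z_j^+\} \subset \{L_t(\lambda_j^+) \geq e^{c_j N_k(t)}\}$ is exactly the paper's Lemma~\ref{lemma::1}. The gap is in the final accounting, which you explicitly defer: you assert, but do not verify, that $\sum_{j\geq j_0}(j\log 2)^{-C_b\delta/2}\leq e^{-\delta}$ for all $\delta\geq 1$. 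Because you fold $\delta$ \emph{multiplicatively} into the per-epoch threshold $c_j = C_b\,\delta\,\log\log(2^j)/2^{j+1}$, the union-bound sum depends on $\delta$ in a way that genuinely needs checking, and as written it fails at the first epoch: with $b=3$ one has $j_0=\lfloor\log_2 3\rfloor=1$, so $\log\log(2^{j_0})=\log\log 2<0$, the threshold $c_{j_0}$ is negative (no $z_j^{\pm}$ with $\psi^*(z_j^{\pm})=c_{j_0}$ exists, since $\psi^*\geq 0$), and the claimed per-epoch bound $2(j\log 2)^{-C_b\delta/2}$ exceeds $2$ whenever $j\log 2\leq 1$. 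One can repair this by starting the peeling at the epoch actually containing $b$ and using the true minimum of $\log\log N/N$ over $N\in[b,2^{j+1})$, but that repair and the uniform-in-$\delta$ summation are precisely the content of the lemma and cannot be left as an assertion that ``$C_b$ is tuned so that it works.''

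The paper sidesteps this entirely by combining $\delta$ and the peeling penalty \emph{additively} rather than multiplicatively: it first proves the uniform bound $\mathbb{P}\left(\exists t:\, N(t)\geq b,\ N(t)\psi^*(\hat{\mu}(t)-\mu)\geq e\left(\delta+\log h(\log N(t))\right)\right)\leq 2e^{-\delta}$ with $h(x)=x^2/\log b$, chosen so that $\sum_{j\geq 1}1/h(\log b+j)\leq 1$. Ville's inequality at level $h(j)e^{\delta}$ then gives per-epoch probability $e^{-\delta}/h(j)$, the sum over epochs telescopes to $e^{-\delta}$ with no $\delta$-dependent tuning whatsoever, and there is no problematic first epoch because $\log h(\log v)\geq 0$ for $v\geq b$. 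The multiplicative form $\dbtilde{N}_k(\tau)D_{\psi^*_{\mu_k}}\geq C_b\delta$ of the lemma, the restriction $\delta\geq 1$, and the constant $C_b=4e(1+1/\log\log b)$ all arise only in the final, elementary conversion of the additive bound into the normalized one. I would recommend restructuring your argument along these lines: it turns the delicate balancing act you flag as ``the main technical obstacle'' into a one-line consequence of the choice of $h$.
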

The proof of the lemma is deferred to Appendix~\ref{subSec::proof_of_adaptive_ineq}. 
Similar inequalities have been developed in the context of always valid confidence sequences or finite-LIL bounds. Except in the sub-Gaussian case, the existing inequalities cannot be directly converted into bounds on the Bregman divergence. Recently, \cite{garivier2013informational} provided concentration inequalities for the KL loss, and \cite{kaufmann2018mixture} derived similar inequalities for the additive KL loss across several arms. However, their bounds depend on $\delta$ in a complicated way, making it difficult to develop bounds for the risk. In contrast, the bound in \eqref{eq::adaptive_deviation_ineq} is linear in $\delta$.



Next, for any $r >0$, define the $r$-th order iterated logarithmically discounted effective sample size of an adaptively chosen arm as 
\begin{equation}
\dbtilde{n}^{\eff,r}_\kappa := \left[\mathbb{E}\left[1/\dbtilde{N}_\kappa^r (\tau)\right]\right]^{-1/r},~~ \forall r >0,
\end{equation}
where the expectation is over the randomness in all four sources of adaptivity. We emphasize that the above quantity is nonrandom; the subscript $\kappa$ merely differentiates it from the effective sample size of a fixed arm and is not to be interpreted as residual randomness. One can  easily check that $\dbtilde{n}^{\eff,r}_\kappa$ is decreasing with respect to $r$ by Jensen's inequality.
The following corollary shows how to control the risk using $\dbtilde{n}^{\eff, r}_\kappa$. The proof can be found in Appendix~\ref{subSec::proof_of_bound_in_fully_adap}.
\begin{corollary}\label{cor::bounds_in_fully_adap}
	For any $r \in (0,1)$, the $r$-quasi-norm of the divergence can be bounded as
	\begin{equation}\label{eq::r-norm_bound}
	\left[\mathbb{E} D_{\psi^*_{\mu_{\kappa}}}^{r}\left(\hat{\mu}_{\kappa}(\tau), \mu_{\kappa} \right)\right]^{1/r}
	\leq \frac{C_b}{\dbtilde{n}^{\eff, r/(1-r)}_{\kappa}} \left[I\left(\kappa;\D_{\Tau}\right) + 1.25 \right] .
	\end{equation}
\end{corollary}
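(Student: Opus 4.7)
The plan is to deduce Corollary~\ref{cor::bounds_in_fully_adap} from Theorem~\ref{thm::bounds_in_fully_adap} by a single application of H\"older's inequality that splits the random variable $D_{\psi^*_{\mu_\kappa}}(\hat{\mu}_\kappa(\tau),\mu_\kappa)$ into a factor whose expectation is already controlled by the theorem and a factor involving $\dbtilde{N}_\kappa(\tau)^{-1}$ whose $L^q$-norm is exactly the $r$-th order iterated-log effective sample size. Since the theorem establishes the inequality in the case $r=1$, the corollary is essentially a rearrangement lemma that trades off the exponent on the divergence against the exponent on $\dbtilde{N}_\kappa(\tau)$.

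Concretely, for $r\in(0,1)$, I would write
\begin{equation*}
D_{\psi^*_{\mu_\kappa}}\bigl(\hat\mu_\kappa(\tau),\mu_\kappa\bigr)^{r}
= \Bigl(\dbtilde{N}_\kappa(\tau)\, D_{\psi^*_{\mu_\kappa}}\bigl(\hat\mu_\kappa(\tau),\mu_\kappa\bigr)\Bigr)^{r}\cdot \dbtilde{N}_\kappa(\tau)^{-r},
\end{equation*}
and apply H\"older's inequality with conjugate exponents $p=1/r$ and $q=1/(1-r)$:
\begin{equation*}
\mathbb{E}\bigl[D_{\psi^*_{\mu_\kappa}}(\hat\mu_\kappa(\tau),\mu_\kappa)^{r}\bigr]
\leq \Bigl(\mathbb{E}\bigl[\dbtilde{N}_\kappa(\tau)\, D_{\psi^*_{\mu_\kappa}}(\hat\mu_\kappa(\tau),\mu_\kappa)\bigr]\Bigr)^{r}\Bigl(\mathbb{E}\bigl[\dbtilde{N}_\kappa(\tau)^{-r/(1-r)}\bigr]\Bigr)^{1-r}.
\end{equation*}
By the very definition $\dbtilde{n}^{\eff,r/(1-r)}_{\kappa}=(\mathbb{E}[\dbtilde{N}_\kappa(\tau)^{-r/(1-r)}])^{-(1-r)/r}$, the second factor on the right equals $(\dbtilde{n}^{\eff,r/(1-r)}_{\kappa})^{-r}$. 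Invoking Theorem~\ref{thm::bounds_in_fully_adap} to bound the first factor by $C_b\bigl[I(\kappa;\D_\Tau)+1.25\bigr]^{r}$ and then taking $r$-th roots on both sides produces exactly the claimed inequality~\eqref{eq::r-norm_bound}.

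There is essentially no obstacle here: all the substantive work was already done in proving Theorem~\ref{thm::bounds_in_fully_adap} via the deviation inequality of Lemma~\ref{lemma::adaptive_deviation_ineq} and the Donsker-Varadhan variational representation. The only care needed is to verify that the H\"older exponents are admissible (which they are for any $r\in(0,1)$, since then $p,q>1$) and that the underlying assumption $N_k(t_0)\geq b\geq 3$ is in force so that $\dbtilde{N}_\kappa(\tau)$ is well-defined and strictly positive, ensuring that the negative moment $\mathbb{E}[\dbtilde{N}_\kappa(\tau)^{-r/(1-r)}]$ is finite and the definition of $\dbtilde{n}^{\eff,r/(1-r)}_{\kappa}$ is legitimate.
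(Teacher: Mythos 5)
Your proposal is correct and follows essentially the same route as the paper: the same decomposition $D^{r}=(\dbtilde{N}_\kappa(\tau)D)^{r}\cdot\dbtilde{N}_\kappa(\tau)^{-r}$, the same H\"older step (the paper writes it with exponents $p=1/r$ and $q=1/(1-r)$ and substitutes $r=1/p$ at the end rather than the start), and the same invocation of Theorem~\ref{thm::bounds_in_fully_adap} to control the first factor. The exponent bookkeeping identifying the second factor with $(\dbtilde{n}^{\eff,r/(1-r)}_{\kappa})^{-r}$ checks out.
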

\noindent In the sub-Gaussian setting, by choosing $r = 1/2$, the above results results in the bound for the $\ell_1$ risk
\begin{equation}\label{eq::1/2-norm_bound_sub_G}
\mathbb{E} \left|\hat{\mu}_{\kappa}(\tau)- \mu_{\kappa} \right|
\leq \frac{\sigma}{\sqrt{\dbtilde{n}^{\eff}}}\sqrt{2C_b\left[I\left(\kappa;\D_{\Tau}\right) + 1.25\right]} ,
\end{equation}
which is also comparable with the  bound~\eqref{eq::Russo_bound_on_l1} on the $\ell_1$ risk bound given by  \cite{russo2016controlling},
\begin{equation}
\mathbb{E} \left|\hat{\mu}_\kappa(n) - \mu_\kappa  \right| \leq \frac{\sigma}{\sqrt{n}}\left(c_1 \sqrt{2I(\kappa ; \D_{\Tau})} + 1 \right).
\end{equation}

We quickly point out that \Cref{thm::bounds_in_fully_adap} and its corollary immediately yield results for the setting where we adaptively rewind to time $\tau$, but choose a fixed arm $\kappa=k$, since $I(\kappa,\D_{\Tau})=0$ in this case. Under a deterministic setting where a fixed number of observations are drawn from a fixed arm, the fully adaptive bounds on the normalized risk~\eqref{eq::adap_risk} of this subsection are strictly weaker than the ones on the unnormalized risk~\eqref{eq::bound_on_Bregman} derived in the previous subsection, because the former bounds contain the additional log-log factor. In general, however, these two bounds are not comparable. Finally, we also remark that by letting $r \to 1$, we get $\dbtilde{n}^{\eff, r/(1-r)}_{\kappa} \to b / \log\log b$ which implies the following bound on the risk:
\begin{equation}
  \mathbb{E} D_{\psi^*_{\mu_{\kappa}}}\left(\hat{\mu}_{\kappa}(\tau), \mu_{\kappa} \right)
	\leq C_b\frac{\log\log b}{b} \left[I\left(\kappa;\D_{\Tau}\right) + 1.25\right] .
\end{equation}
It is an open question whether it is possible to get a bound based on $\dbtilde{n}^\eff$ instead of $b$ in the fully adaptive setting. 

\section{Summary of the main theorems and proof techniques} \label{sec::proof_sketch}
We have analyzed the properties of the sample mean estimator under four types of adaptivity implied by arbitrary rules for sampling, stopping, choosing and rewinding. Table~\ref{tab::summary} summarizes the risk bounds we have derived under different conditions on the distribution of the arms and under different data collection / analysis procedures. 

\begin{table}[ht]
\centering
\setlength\tabcolsep{1.5mm}
\captionsetup{labelfont = bf, format =hang}
\def\arraystretch{2}
\caption{Summary of normalized and unnormalized risk bounds under different conditions. \newline Recall that $\tilde{N}_\kappa := N_{\kappa} / \log N_{\kappa}$ and $\protect\dbtilde{N}_\kappa := N_{\kappa} / \log\log N_{\kappa}$.}
\label{tab::summary}
\begin{tabular}{cclr}
\hline
Tail condition & Data collection  &  \multicolumn{1}{c}{Risk bound} & \\ \hline\hline
$\sigma_k^{(2)} < \infty$  & Nonadaptive      &   $\mathbb{E}\left[N_k(T)\left(\hat{\mu}_k(T) - \mu_k\right)^2\right] = \sigma_k^2$ & (Prop~\ref{prop::minimax_nonadaptive}) \\ \addlinespace[2mm] \hline
\addlinespace[2mm]
$\max_k \sigma_k^{(2p)} < \infty$   & Adaptive choosing &         $\begin{aligned}\mathbb{E}&\left[N_\kappa(T)\left(\hat{\mu}_\kappa(T) - \mu_\kappa\right)^2\right] \\&\leq \left\|\sigma_\kappa\right\|_2^2 + C_p\left\|\sigma_\kappa^{(2p)}\right\|_{2p}^2 I_q^{1/q} (\kappa, \D_{\Tau})\end{aligned}$ & (Prop~\ref{prop::bounds_l2_finite_moment})  \\ \addlinespace[2mm] \hline
\addlinespace[2mm]
$\max_k \sigma_k^{(2(p+\epsilon))} < \infty$  & Fully adaptive  &  $	\begin{aligned}\mathbb{E}&\left[\tilde{N}_\kappa (\tau) \left(\hat{\mu}_{\kappa}(\tau)- \mu_{\kappa} \right)^2 \right] \\
	&\leq C_{1,\epsilon}\|\sigma_\kappa\|_2^2 + C_{p,\epsilon}\|\sigma_\kappa\|_{2p}^2  I_q^{1/q}\left(\kappa, \D_{\Tau}\right)\end{aligned}$   & (Thm~\ref{thm::bounds_in_fully_adap_finite_moment})    \\\addlinespace[2mm] \hline
	\addlinespace[2mm]
$\E[e^{\lambda (Y - \mu)}] \leq e^{\psi(\lambda)} $     & \makecell{Adaptive sampling \\ and stopping}  &  
$	\begin{aligned}\mathbb{E}&\left[ D_{\psi_{\mu_k}^*} (\hat{\mu}_k(\Tau), \mu_k)\right] \\ &\leq \min\left\{2e\frac{1 + \log (n_k^{\eff} / b) }{n_k^{\eff}}, \inf_{r>1} \frac{ C_{r}}{n_k^{\eff, r}}\right\}\end{aligned}$& (Thm~\ref{thm::bound_on_Bregman})\\\addlinespace[2mm] \hline
\addlinespace[2mm]
$\E[e^{\lambda (Y - \mu)}] \leq e^{\psi(\lambda)} $        &  Fully adaptive     &   $\begin{aligned}\mathbb{E}& \left[\dbtilde{N}_\kappa(\tau) D_{\psi^*_{\mu_{\kappa}}}\left(\hat{\mu}_{\kappa}(\tau), \mu_{\kappa} \right) \right]\\
	&\leq C_b \left[I\left(\kappa;\D_{\Tau}\right) + 1.25\right] \end{aligned}$ & (Thm~\ref{thm::bounds_in_fully_adap})  \\ \addlinespace[2mm] \hline
\end{tabular}
\end{table}

The derivation of the upper bounds for the various notions of the risk of the chosen mean are based on the variational representations of the $f_q$-divergence (Theorem~\ref{thm::bounds_in_fully_adap_finite_moment}) and of the KL divergence (Theorem~\ref{thm::bounds_in_fully_adap}). These take the form of
\begin{equation}
	\frac{1}{q}D_{f_q} (P || Q)  = \sup_{f \in \mathcal{C}_p}  \mathbb{E}_P \left[f(X)\right] - \mathbb{E}_Q \left[f(X)\right] - \mathbb{E}_Q \left[\frac{\left|f(X)\right|^p}{p}\right], \label{eq::phi_q_rep}
	\end{equation}
	and
	\begin{equation}
	D_{KL} (P || Q)  = \sup_{f \in \mathcal{C}_{\exp}} \mathbb{E}_P \left[f(X)\right] - \log \mathbb{E}_Q \left[e^{f(X)}\right], \label{eq::KL_req}
\end{equation}
respectively, where $P$, $Q$ are probability measures on $\mathcal{X}$. In  the first equation~\eqref{eq::phi_q_rep}, $\mathcal{C}_p$ denotes the set of measurable functions $f \colon \mathcal{X} \mapsto \mathbb{R}$ such that $\mathbb{E}_Q \left|f(X)\right|^p < \infty$, with  $p, q > 1$ satisfying $1/p + 1/q = 1$, while in the second equation~\eqref{eq::KL_req}, $\mathcal{C}_{\exp}$ is the set of measurable functions $f : \mathcal{X} \mapsto \mathbb{R}$ such that $\mathbb{E}_Q \left[e^{f(X)}\right] < \infty$. 

Now, for each $k \in [\K]$, set $ P_k = \mathcal{L}\left(\D_{\Tau} | {\kappa} = k\right)$, $Q = \mathcal{L}\left(\D_{\Tau}\right)$ and 
	\begin{equation}
	f_k = \begin{cases}
	\lambda\tilde{N}_{k}(\tau)\left(\hat{\mu}_k(\tau)-\mu_k \right)^{2} &\text{ (Theorem~\ref{thm::bounds_in_fully_adap_finite_moment}),} \\
	\lambda\dbtilde{N}_{k}(\tau)  D_{\psi_{\mu_k}^*}(\hat{\mu}_k, \mu_k)&\text{ (Theorem~\ref{thm::bounds_in_fully_adap})}. 
	\end{cases}
	\end{equation}
By plugging these choices of $P_k, Q$ and $f_k$ in the right hand sides of \eqref{eq::phi_q_rep} and \eqref{eq::KL_req}, we obtain lower bounds on the $f_q$ divergence and the KL divergence between the conditional and unconditional laws of the data. Next, based on these lower bounds, we  derive upper bounds on the normalized risk of the chosen mean in Theorem~\ref{thm::bounds_in_fully_adap_finite_moment} and \ref{thm::bounds_in_fully_adap}. The details of our derivations can be found in Appendix~\ref{subSec::Proof_of_second_them_finite_moment} (Theorem~\ref{thm::bounds_in_fully_adap_finite_moment}) and Appendix~\ref{subSec::Proof_of_second_them} (Theorem~\ref{thm::bounds_in_fully_adap}).


This style of proof  was originally developed by \cite{russo2016controlling} and \cite{jiao2017dependence} for the fixed sample size setting. The main technical hurdle to extend it to the fully adaptive setting is to find tight upper bounds on the expectations of the $p$th power and the exponential moment of the normalized losses,  defined for each $k \in [\K]$ by
\begin{align*}
     &\mathbb{E}\left[\left|\lambda\tilde{N}_k(\tau)\left(\hat{\mu}_k(\tau)-\mu_k \right)^{2}\right|^p\right]~&\text{(Theorem~\ref{thm::bounds_in_fully_adap_finite_moment})},\\	&\mathbb{E}\left[\exp\left\{\lambda\dbtilde{N}_{k}(\tau)  D_{\psi_{\mu_k}^*}(\hat{\mu}_k, \mu_k)  \right\}\right] ~&\text{(Theorem~\ref{thm::bounds_in_fully_adap})},
\end{align*}
where $\lambda> 0$ is a parameter to be chosen appropriately. To derive upper bounds independent of the sampling algorithms and of the stopping rules, we use the deviation inequalities in Lemma~\ref{lemma::adaptive_deviation_ineq_finite_moment} and \ref{lemma::adaptive_deviation_ineq} in conjunction with the following facts:
\begin{align*}
\mathbb{E}|X|^p &\leq 1+ \int_{1}^{\infty}\mathbb{P}(|X| > \delta^{1/p}) \mathrm{d}\delta,\\
\mathbb{E}\left[e^X\right] &\leq e + \int_{1}^{\infty}\mathbb{P}(X > \delta) e^\delta \mathrm{d}\delta.
\end{align*}

In the proofs of Lemma~\ref{lemma::adaptive_deviation_ineq_finite_moment} and \ref{lemma::adaptive_deviation_ineq} we deploy  martingale inequalities to obtain high probability bounds on events where the running sum  $\{S_k(t)\}$ eventually exceeds certain linear functions of the number of draws $\{N_k(t)\}$. Specifically, in Lemma~\ref{lemma::adaptive_deviation_ineq_finite_moment} we use the $\ell_p$-version of the Dubins-Savage inequality \cite{khan2009p}, while in Lemma~\ref{lemma::adaptive_deviation_ineq} our arguments are directly inspired by the proof of the exponential line-crossing inequality of \cite{howard2020time}.

The derivation of the bound in Theorem~\ref{thm::bound_on_Bregman} is based on the deviation inequality for unnormalized loss in Lemma~\ref{lemma::deviation_ineq} and the fact that $\mathbb{E}|X| = \epsilon + \int_{\epsilon}^\infty \mathbb{P}(|X| > \delta) \mathrm{d}\delta$ for any choice of $\epsilon \geq 0$; utilizing both, we have the following 
intermediate bound:
\begin{equation} \label{eq::intermediate}
   \mathbb{E}\left[ D_{\psi_{\mu_k}^*} (\hat{\mu}_k(\Tau), \mu_k)\right] \leq \epsilon + 2  \int_{\epsilon}^{\infty}\left[ \mathbb{E} \exp\left\{-\frac{q}{p} \delta N_k(\Tau)  \right\}\right]^{1/q} \mathrm{d} \delta,
\end{equation}
where $\epsilon\geq0$ and $p, q >1$ with  $1/p + 1/q = 1$. By carefully choosing  $\epsilon, p$ and $q$ we then arrive at the final bounds in terms of effective sample sizes.  The proof the deviation inequality in Lemma~\ref{lemma::deviation_ineq} is based on the following process:
\[
\left\{\exp\left\{ \lambda\left( S_k(t)- \mu_k N_k(t) \right) - N_k(t)\psi(\lambda)  \right\}\right\}_{t \geq 0},
\]
which is supermartingale with respect to the filtration $\{\mathcal{F}_t\}_{t\geq0}$, for any fixed $\lambda \in \Lambda$.

        As a a final remark on the tightness of the risk bounds in \cref{tab::summary}, we point out that, except for the bounds presented in Section~\ref{subSec::moment_full_adap} (2nd and 3rd rows in \cref{tab::summary}), all the other risk bounds cannot be improved in general, as we have demonstrated in \cref{prop::minimax_nonadaptive}, \cref{eg::brownian-stopping} and \cref{eg::lil_stopping_revisit}. At the present time, it is unclear to us whether the log factor in the expression \eqref{eq:normalized-samplesize-log} for the discounted sample size for the  bounds in Section~\ref{subSec::moment_full_adap} can be sharpened without self-normalization (see the remark following \Cref{lemma::adaptive_deviation_ineq_finite_moment}.)




\section{Discussion and future work}\label{sec::disc}
 We build on a line of interesting work that considered one type of adaptivity at a time. For example, the important work of  \cite{russo2016controlling} and its extensions by \cite{jiao2017dependence} can be viewed as understanding the bias of the sample mean under nonadaptive sampling, nonadaptive stopping and adaptive choosing. Similarly, the work by \cite{nie2018adaptively} can be seen as providing a qualitative understanding of the sample mean  under ``optimistic'' adaptive sampling, but for a deterministic arm stopped at a deterministic time. Further, while these past works have primarily focused on the bias, our work answers natural questions involving the estimation risk and consistency. 


Several interesting questions remain for future research. The first one revolves around the choice of loss function for calculating the risk. Arguably, we picked the most natural loss function, which is the $\ell_2$ loss for heavy-tailed arms and the Bregman divergence with respect to the convex conjugate of the CGF, which reduces to the KL-loss for exponential families. However, it is likely that the bounds achieved as implications of our results are not tight for other loss functions, and different techniques may be required. A second, related, question involves proving minimax lower bounds for risk (for various loss functions) under all kinds of adaptivity. The work of \cite{sackrowitz1986evaluating} on Bayes and minimax approaches towards evaluating a selected population may be a relevant starting point. 

A final question revolves around possibly moving away from the sample mean, and in particular exploring whether there exist generic methods to either (a) alter the process of collecting the data to produce an unbiased estimator of the mean, or (b) to debias the sample mean post hoc given explicit knowledge of the exact sampling, stopping and choosing rule used. For aim (b), sample splitting was proposed by \cite{xu2013estimation}, techniques from conditional inference were suggested by \cite{nie2018adaptively}, and a ``one-step'' estimator was suggested by \cite{deshpande2017accurate}. However, all three methods
seem to account for adaptive sampling, but not adaptive stopping or choosing, but their techniques seem to provide a good starting point.  More recently, ideas from differential privacy were exploited by \cite{neel2018mitigating} for aim (a). It remains unclear what the theoretical and practical tradeoffs are between these methods, and how much they improve on the risk of the sample mean in a nonparametric and nonasymptotic sense in the fully adaptive setting.


Overall, we anticipate much progress on the above and other related questions in future years, due to the pressing concerns raised by the need to perform statistical inference on data collected via adaptive schemes that are common in the tech industry.

		\bibliographystyle{plainnat}
	\bibliography{risk_consistency}

\begin{appendices}
	\section{Examples of the Bregman divergences as a loss function} \label{sec::example_divergence}
In this section, we present examples of Bregman divergences under commonly used assumptions on the underlying distribution.

Using the same notation as in Section~\ref{sec:sub-psi},  the convex conjugate of the function $\lambda \in \Lambda \mapsto \psi_\mu(\lambda) := \lambda \mu+ \psi(\lambda)$ is the function $\psi^*_{\mu}$ on $\Lambda^* := \left\{x \in \mathbb{R} : \sup_{\lambda \in \Lambda} \lambda x - \psi_\mu (\lambda) < \infty   \right\}$ given by
\begin{equation}
	\psi_\mu^*(z) := \sup_{\lambda \in \Lambda} \lambda z - \psi_\mu (\lambda), \quad z \in \Lambda^*.
\end{equation}
The Bregman divergence with respect to $\psi_\mu^*$ is then defined as
\begin{equation}
	D_{\psi_{\mu}^*} (\hat{\mu}, \mu) = \psi_{\mu}^* (\hat{\mu}) - \psi_{\mu}^* (\mu) - \psi_{\mu}^{*\prime}(\mu) \left(\hat{\mu}  - \mu\right), \quad \hat{\mu}, \mu \in \Lambda^*.
\end{equation}

Below we provide some examples demonstrating that $D_{\psi_{\mu}^*} (\hat{\mu}, \mu)$ is a natural loss for the mean estimation problem when the underlying distribution is sub-$\psi$.

\begin{example} \label{eg::Bregman_loss_sub_G}
	If the data are generated from a sub-Gaussian distribution with parameter $\sigma$, then $\psi_\mu(\lambda)$ is defined for all $\lambda \in \mathbb{R}$ as $\psi_{\mu}(\lambda) := \mu\lambda + \frac{\sigma^2}{2}\lambda^2$, the Bregman divergence is defined over $\mathbb{R}$ and is equal to the scaled $\ell_2$ loss: 
	\begin{equation}
		D_{\psi_{\mu}^*} (\hat{\mu}, \mu) := \frac{\left(\hat{\mu} - \mu \right)^2}{2\sigma^2}.
	\end{equation}
\end{example}
\begin{example} \label{eg::Bregman_loss_sub_E}
	If the data are generated from sub-exponential distributions with parameter $(\nu, \alpha)$, then $\psi_{\mu}(\lambda)$ is defined for $\lambda \in ( - 1/\alpha, 1/\alpha)$ as
	$\psi_{\mu}(\lambda) =
	\mu \lambda +\frac{\nu^2}{2}\lambda^2$,
	and  the Bregman divergence is defined over $\mathbb{R}$ and is given as:
	\begin{equation}
		D_{\psi_{\mu}^*} (\hat{\mu}, \mu)  = \begin{cases}
			\frac{1 }{2\nu^2}\left(\hat{\mu} - \mu \right)^2, &\text{ if~~} 
			\left|\hat{\mu} - \mu \right| \leq \frac{\nu^2}{\alpha}, \\
			\frac{1 }{\alpha}\left|\hat{\mu} - \mu \right| - \frac{\nu^2}{2\alpha^2}, &\text{ if~~} 
			\left|\hat{\mu} - \mu \right| >\frac{\nu^2}{\alpha}.
		\end{cases}
	\end{equation}
\end{example}
\begin{example} \label{eg::Bregman_loss_Bernstein}
	If the data-generating distribution $P$ satisfies the Bernstein condition
	\[
	\left|\mathbb{E}_{X\sim P}\left(X-\mu\right)^k \right| \leq \frac{1}{2}k!\sigma^2b^{k-2},~~\text{ for}~~k = 3,4, \ldots,
	\]
	for some $b>0$, where $\sigma^2 = \mathbb{E}_{X\sim P}\left(X-\mu\right)^2$, then, it can be shown that $P$ is sub-$\psi$,  where $\psi_{\mu}(\lambda)$ is defined for $\lambda \in ( - 1/b, 1/b)$ as
	$
	\psi_{\mu}(\lambda) = \mu \lambda + \frac{\lambda^2\sigma^2}{2(1-b|\lambda|)}.
	$
	In this case, the Bregman divergence is defined on $\mathbb{R}$ and can be lower bounded by
	\begin{equation}
		D_{\psi_{\mu}^*} (\hat{\mu}, \mu)  \geq \frac{1}{2}\frac{\left(\hat{\mu} - \mu \right)^2}{\sigma^2 + b\left|\hat{\mu} -\mu\right|} .
	\end{equation}
\end{example}
\begin{example} \label{eg::Bregman_loss_Ber}
	If the data are generated from a Bernoulli distribution, then recalling that the uncentered CGF is given by $\psi_{\mu}(\lambda) = \log(1-\mu + \mu e^\lambda)$, for $\mu \in (0,1)$, the Bregman divergence is defined on $(0,1)$ and is given by 
	\begin{equation}
		D_{\psi_{\mu}^*} (\hat{\mu}, \mu) = \hat{\mu} \log \frac{\hat{\mu}}{\mu} + \left(1- \hat{\mu}\right) \log \frac{1-\hat{\mu}}{1-\mu}.
	\end{equation}
\end{example}

\section{Proof of Theorem~\ref{thm::bounds_in_fully_adap_finite_moment} and related statements}
\label{sec::proof_of_main_thm}

Recall that we assume that there exists a time $t_0$ such that, almost surely, $\Tau \geq \tau \geq t_0$ and $N_k(t_0) \geq 3$  for all $ k \in [{\K}]$.
For ease of readability, we drop the subscript $k$ throughout this section. We begin with the proof of the adaptive deviation inequality~\eqref{eq::adaptive_deviation_ineq_finite_moment} of Lemma~\ref{lemma::adaptive_deviation_ineq_finite_moment}, which is a fundamental component of the proof of Theorem~\ref{thm::bounds_in_fully_adap_finite_moment} and related statements.

\subsection{Proof of Lemma~\ref{lemma::adaptive_deviation_ineq_finite_moment}} \label{subSec::proof_of_adaptive_ineq_finite_moment}
The proof strategy involves splitting the deviation event into simpler sub-events and then find exponential bounds for the probability of each sub-event. In detail, for each $t \geq 0$ and $j \geq 2$, define the events
\begin{align*}
F_t &:= \left\{  N(t) > e, \frac{N(t)}{4\sigma^2}\left(\hat{\mu}_t - \mu\right)^2> e \delta \log N(t)  \right\}, \\
G_t &:= \left\{ \hat{\mu}(t) \geq \mu \right\}, \text{ and}\\
H_t^j & := \left\{ e^{j-1} \leq N(t) < e^j \right\}.
\end{align*}
We remark that the use of constant $e$ above is purely for mathematical convenience; any other constant would have also sufficed.
To bound the probability of the aforementioned events, we prove the following lemma.
\begin{lemma} \label{lemma::1_l2}
	For any fixed $\delta >0$, there exists a deterministic $\lambda_j \geq 0$ such that
	\begin{equation}
	\left\{ F_t \cap G_t \cap H_t^j \right\} \subset  \left\{\lambda_j \left[S(t) -\mu N(t)\right]- \lambda_j^2\sigma^2 N(t) \geq \delta j\right\},
	\end{equation}
	and  a deterministic $\lambda_j' < 0$ such that
	\begin{equation}
	\left\{ F_t \cap G_t^c \cap H_t^j \right\} \subset  \left\{\lambda_j^\prime  \left[S(t) -\mu N(t)\right]- {\lambda_j^\prime}^2\sigma^2 N(t) \geq \delta j\right\}.
	\end{equation}
\end{lemma}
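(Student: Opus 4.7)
I would convert the quadratic-in-mean deviation defining $F_t$ into a linear \emph{line-crossing} event, so that downstream one can apply an $\ell_p$-maximal inequality (Dubins--Savage type) to the supermartingale $\{S(t)-\mu N(t) - \lambda_j\sigma^2 N(t)\}$. The purpose of the peeling into the geometric shell $H_t^j$ is precisely to let a single deterministic $\lambda_j$ mimic the path-dependent optimum $\lambda^\ast = (S(t)-\mu N(t))/(2\sigma^2 N(t))$ up to a universal constant; on a shell of geometric width $e$, $\lambda^\ast$ varies only by a bounded factor, so any representative value in that shell loses only a constant.

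Extracting the deterministic consequences of the event, the inequality in $F_t$ rewrites as $[S(t)-\mu N(t)]^2 > 4e\sigma^2\delta N(t)\log N(t)$; combined with $G_t$ it gives $S(t)-\mu N(t) \geq 2\sigma\sqrt{e\delta N(t)\log N(t)}$, while on $H_t^j$ we have $N(t)\in[e^{j-1},e^j)$ and $\log N(t)\in[j-1,j)$. My choice would be
\[
\lambda_j \;:=\; \sigma^{-1}\sqrt{\delta\, j\, e^{\,1-j}},
\]
a nonnegative constant depending only on $j,\delta,\sigma$. Using the lower bound above, it reduces to showing that
\[
g(N) \;:=\; 2\delta\, e^{1-j/2}\sqrt{j\,N\log N} \;-\; \delta\, j\, e^{1-j}\,N
\]
satisfies $g(N)\geq \delta j$ for every $N\in[e^{j-1},e^j)$. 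A short calculation shows that the function $(1+\log N)/\sqrt{N\log N}$ is strictly decreasing in $N$, hence so is $g'$, so $g$ attains its minimum on the shell at one of the endpoints. Direct evaluation yields $g(e^{j-1}) = 2\delta\sqrt{ej(j-1)} - \delta j$ and $g(e^j) = \delta j\, e$: the latter exceeds $\delta j$ trivially, and the former exceeds $\delta j$ iff $e(j-1)\geq j$, i.e.\ $j\geq e/(e-1)\approx 1.58$, which holds for all $j\geq 2$. This proves the first inclusion.

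For the second inclusion I would take $\lambda_j' := -\lambda_j<0$. On $G_t^c$ we have $S(t)-\mu N(t)<0$, and $F_t$ still gives $-(S(t)-\mu N(t))\geq 2\sigma\sqrt{e\delta N(t)\log N(t)}$. Since $\lambda_j'[S(t)-\mu N(t)] = \lambda_j\,|S(t)-\mu N(t)|$ and $(\lambda_j')^2 = \lambda_j^2$, the calculation of the previous paragraph transfers verbatim to conclude $\lambda_j'[S(t)-\mu N(t)] - (\lambda_j')^2\sigma^2 N(t) \geq \delta j$.

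The only delicate point is the uniformity of $\lambda_j$ across the shell: the minimum of $g$ sits at the left endpoint $N=e^{j-1}$, and the required inequality closes there only because the constant $e$ in the definition of $F_t$ is calibrated precisely so that $e(j-1)\geq j$ for every $j\geq 2$. Any smaller constant in $F_t$ would make the peeling fail at small $j$, which explains (and essentially dictates) the otherwise mysterious $e$-factor in the statement of Lemma~\ref{lemma::adaptive_deviation_ineq_finite_moment}.
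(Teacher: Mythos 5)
Your proof is correct and takes essentially the same route as the paper's: you choose the identical multiplier $\lambda_j=\sigma^{-1}\sqrt{\delta j e^{1-j}}$ (the paper writes it as $z_j/(2\sigma^2)$ with $\tfrac{1}{4\sigma^2}z_j^2=\delta j/e^{j-1}$), take $\lambda_j'=-\lambda_j$ for the second inclusion, and reduce the set inclusion to a deterministic inequality over the shell $N(t)\in[e^{j-1},e^j)$. The only cosmetic difference is the final verification: the paper first lower-bounds $\hat{\mu}(t)-\mu\geq z_j$ via the monotonicity of $v\mapsto \log v/v$ on $[e,\infty)$ and then uses $\lambda_j z_j-\lambda_j^2\sigma^2=z_j^2/(4\sigma^2)\geq \delta j/N(t)$, whereas you keep $N(t)$ as a variable and minimize the concave function $g$ at the shell endpoints --- both close the argument.
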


\begin{proof}[Proof of Lemma~\ref{lemma::1_l2}]
 The proof borrows arguments from the proof of Theorem 11 in \cite{garivier2011kl}.
	On the event $F_t \cap G_t \cap H_t^j$, since $e \leq e^{j-1} \leq N(t) < e^j$ and $v \mapsto\frac{\log v }{v}$ is non-increasing on $[e, \infty)$, we have that 
	\begin{equation} \label{eq::control_psi_star_l2}
    \frac{1}{4\sigma^2} \left(\hat{\mu}(t) -\mu\right)^2 > \frac{e \delta \log N(t)}{N(t)} 
	\geq \frac{ \delta j}{e^{j-1}}  > 0.
	\end{equation}
    Now, pick a deterministic real number $z_j \geq 0$ such that 
	\[
	\frac{1}{4\sigma^2}z_j^2 =  \frac{\delta j}{e^{j-1}} .
	\]
	Since  $\hat{\mu}(t)  -\mu \geq 0$ and $x \mapsto x^2$ is an increasing function on $[0, \infty)$, our choice of $z_j$ along with the inequalities in \eqref{eq::control_psi_star_l2}  implies that $\hat{\mu}(t)  -\mu \geq z_j$, on the event $F_t \cap G_t \cap H_t^j$. Define $\lambda_j := \frac{z_j}{2\sigma^2}$, then, on the event  $F_t \cap G_t \cap H_t^j$, we have that
	\begin{align*}
	\lambda_j \left[\hat{\mu}(t) -\mu\right] - \lambda_j^2\sigma^2  \geq \lambda_j z_j - \lambda_j^2\sigma^2 = \frac{1}{4\sigma^2}z_j^2 = \frac{\delta j}{e^{j-1}} \geq \frac{\delta j}{N(t)} .
	\end{align*} 	
	Re-arranging, we get that
	\[
	\lambda_j \left[S(t) -\mu N(t)\right]- \lambda_j^2\sigma^2 N(t) \geq \delta j.
	\]
	which proves the first statement in the lemma. For the second statement, set $z_j^\prime := -z_j$. Note that, since  $\hat{\mu}(t)  -\mu < 0$ and $x \mapsto x^2$ is an decreasing function on $(-\infty, 0]$, the choice of $z_j'$ and the inequalities in \eqref{eq::control_psi_star_l2} yield that $\hat{\mu}(t)  -\mu < z_j$, on the event $F_t \cap G_t^c \cap H_t^j$. 	Let $\lambda_j^\prime := \frac{z_j^\prime}{2\sigma^2}$. Then, by the same argument used for $\hat{\mu}(t) -\mu \geq 0$ case,  the second statement holds which completes the proof.
\end{proof}

We now return to the proof of Lemma~\ref{lemma::adaptive_deviation_ineq_finite_moment}.
The adaptive deviation inequality \eqref{eq::adaptive_deviation_ineq_finite_moment} can be re-written as the following inequality, 
	\begin{equation}
	\begin{aligned}
	\mathbb{P}\left(\frac{N(\tau)}{4\sigma^2}\left(\hat{\mu}(\tau) -\mu\right)^2 \geq e \delta \log_e N(\tau)\right) 
	\leq \frac{C_p}{\delta^p},
	\end{aligned}
	\end{equation}	
	where $C_p$ is a constant depending only on $p$. To prove the above inequality, it suffices to show that it holds uniformly over time (e.g., see Lemma 3 in \cite{howard2021time}). Therefore, below, we prove the following uniform concentration inequality: 
	\begin{equation}\label{eq::adap_deviation_simple_form_l2}
	\begin{aligned}
	\mathbb{P}\left(\exists t \in \mathbb{N} : N(t) \geq e, \frac{N(t)}{4\sigma^2}\left(\hat{\mu}(t) -\mu\right)^2 \geq e\delta \log N(t)\right) 
	\leq \frac{C_p}{\delta^p}.
	\end{aligned}
	\end{equation}				
	The event on the left-hand side of \eqref{eq::adap_deviation_simple_form_l2} is equal to $\bigcup_{t = 1}^\infty F_t$, and its probability can be bounded follows:
	\begin{align*}
	\mathbb{P}\left( \bigcup_{t = 1}^\infty F_t \right) &= \mathbb{P} \left( \bigcup_{t = 1}^\infty \bigcup_{j =2}^\infty \left[ H_t^{j} \cap F_t \cap \left( G_t \cup G_t^c \right) \right]\right) \\
	&= \mathbb{P} \left( \left[\bigcup_{t = 1}^\infty \bigcup_{j =2}^\infty \left( H_t^{j} \cap F_t \cap  G_t \right) \right] \cup \left[ \bigcup_{t = 1}^\infty \bigcup_{j =2}^\infty \left( H_t^{j} \cap F_t \cap  G_t^c \right)\right]\right) \\
	& \leq \mathbb{P} \left( \bigcup_{t = 1}^\infty \bigcup_{j =2}^\infty \left( H_t^{j} \cap F_t \cap  G_t \right) \right) + \mathbb{P} \left( \bigcup_{t = 1}^\infty \bigcup_{j =2}^\infty \left( H_t^{j} \cap F_t \cap  G_t^c \right) \right).
	\end{align*}
     By Lemma~\ref{lemma::1_l2}, we have that
	\begin{align*}
	&\mathbb{P} \left( \bigcup_{t = 1}^\infty \bigcup_{j =2}^\infty \left( H_t^{j} \cap F_t \cap  G_t \right) \right)  \\
	&\leq \mathbb{P}\left(\exists t \geq 0, \exists j \geq 2 :  \lambda_j \left[S(t)-\mu N(t)\right] - \lambda_j^2\sigma^2 N(t) > \delta j  \right) ~~\text{(by Lemma~\ref{lemma::1_l2}.)}\\
	& \leq  \sum_{j \geq 2} \mathbb{P}\left(\exists t \geq 0,\lambda_j \left[S(t)-\mu N(t)\right] - \lambda_j^2\sigma^2 N(t) > \delta j  \right) ,
	\end{align*}
	where the last inequality stems from the union bound. To get a bound for each probability term, we use the following inequality from \cite{khan2009p} which is a generalization of the Dubins-Savage inequality \cite{darling_inequalities_1967}.
	
	\begin{proposition}[$\ell_p$-version of the Dubins-Savage inequality \cite{khan2009p}]
	    Let $\left\{M(t)\right\}$ be a martingale with respect to a filtration $\{\F_t\}_{t \geq 0}$ such that $M(0) = 0$ and $\mathbb{E}\left[X_t^{2p}\mid \F_{t-1}\right] < \infty$ with  $X_t := M(t) - M(t-1)$ for all $t \geq 1$. Let $\nu_t$ be the conditional variance given by $\nu_t = \mathbb{E}\left[ X_t^2 | \mathcal{F}_{t-1}\right]$. Then, there exist a constant $C_p'$ depending only on $p$ such that for any $a \geq 0$, $b >0$, the following inequality holds.
	    \begin{equation} \label{eq::DS_lp_ineq}
	         \mathbb{P}\left(\exists t \geq 0 : M(t) \geq a + b \sum_{s=1}^t \nu_s  \right) \leq \frac{1}{\left(1 + ab / C_p'\right)^p}.
	    \end{equation}
	\end{proposition}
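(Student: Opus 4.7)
The plan is to prove this uniform-in-time crossing bound by constructing a nonnegative supermartingale and applying Doob's maximal inequality, adapting the classical Dubins--Savage argument from the bounded-increment case to the $\ell_p$ setting. Introduce $Z_t := a + bV_t - M_t$, so that the linear-crossing event $\{M_t \geq a + bV_t\}$ coincides with the sign event $\{Z_t \leq 0\}$. The increment $Z_t - Z_{t-1} = b\nu_t - X_t$ has conditional mean $b\nu_t \geq 0$ and conditional variance $\nu_t$, so $\{Z_t\}$ is a submartingale with strictly positive predictable drift and tends to drift away from $0$.

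The natural candidate supermartingale is $L_t := (c + Z_t)^{-p}$ for a constant $c > 0$ to be tuned. Writing $y := c + Z_{t-1}$ and $\Delta_t := b\nu_t - X_t$, one Taylor-expands $(y + \Delta_t)^{-p}$ around $y$: the first-order contribution to $\mathbb{E}[L_t - L_{t-1} \mid \mathcal{F}_{t-1}]$ equals $-p\, b\nu_t\, y^{-p-1} \leq 0$, while the higher-order remainders are controlled by the conditional moments $\mathbb{E}[|\Delta_t|^k \mid \mathcal{F}_{t-1}]$ for $k \geq 2$, which in turn are bounded through the hypothesis $\mathbb{E}[X_t^{2p} \mid \mathcal{F}_{t-1}] < \infty$ together with $\nu_t = \mathbb{E}[X_t^2 \mid \mathcal{F}_{t-1}]$ and H\"older. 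By choosing $c$ proportional to $1/b$, for an appropriate universal constant $C_p'$, the negative drift term dominates the remainders, yielding $\mathbb{E}[L_t \mid \mathcal{F}_{t-1}] \leq L_{t-1}$ up to the first crossing time $T := \inf\{t : Z_t \leq 0\}$.

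With supermartingality in hand, note that $L_T \geq c^{-p}$ on $\{T < \infty\}$ and $L_0 = (c + a)^{-p}$. The optional stopping inequality then gives
\begin{equation*}
  \mathbb{P}(T < \infty) \;\leq\; c^{p} \, \mathbb{E}[L_0] \;=\; \left(\frac{c}{c + a}\right)^{p} \;=\; \left(1 + \frac{a}{c}\right)^{-p},
\end{equation*}
and substituting $c = C_p'/b$ recovers the claimed bound $(1 + ab/C_p')^{-p}$.

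The main technical obstacle is verifying supermartingality under only a $2p$-th moment hypothesis on the increments. Unlike the bounded-increment or sub-Gaussian regime, where an exponential supermartingale closes the argument cleanly, the Taylor expansion of the polynomial potential $(c+z)^{-p}$ does not terminate, so one must either sum the full series using all available moments up to order $2p$ or derive a sharper integral-remainder estimate to extract the right constant. A secondary subtlety is that $Z_t$ can in principle overshoot below $-c$ in a single step, rendering $L_t$ ill-defined; this is typically handled either by stopping the supermartingale at $T$ and bounding the overshoot contribution separately using the $2p$-moment condition, or by replacing $(c+z)^{-p}$ with a potential that is globally well-defined on $\mathbb{R}$ while still satisfying the supermartingale inequality. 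The cited argument of Khan (2009) handles both issues simultaneously through a single carefully chosen potential function, which is where the sharp constant $C_p'$ emerges.
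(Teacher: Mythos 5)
A point of context first: the paper does not prove this proposition at all --- it is quoted from \citet{khan2009p} and invoked as a black box in the proof of Lemma~\ref{lemma::adaptive_deviation_ineq_finite_moment} --- so there is no in-paper argument to compare yours against, and your sketch has to be judged on its own terms.

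Your skeleton (set $Z_t = a + bV_t - M_t$, run the potential $L_t = (c+Z_t)^{-p}$ as a supermartingale up to the first crossing, apply optional stopping) is the right shape for a Dubins--Savage-type bound, and the closing algebra is fine. The genuine gap is the one step you assert rather than prove: that for some $c = C_p'/b$ depending only on $p$ and $b$, the drift $-p\,b\nu_t\,(c+Z_{t-1})^{-p-1}$ dominates the higher-order remainders. It cannot. Those remainders involve conditional moments of $X_t$ of order up to $2p$, and the hypothesis supplies only their \emph{finiteness}; Lyapunov's inequality gives $\nu_t \le \left(\mathbb{E}[|X_t|^{2p}\mid \mathcal{F}_{t-1}]\right)^{1/p}$, i.e.\ it runs in the wrong direction, so no quantity built from $\nu_t$ alone controls them. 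Indeed, the proposition as transcribed is false for every $p>1$: take the one-step martingale with $X_1 = 2a$ with probability $q=1/(8ab)$ and $X_1 = -2aq/(1-q)$ otherwise, so that $\nu_1 = 4a^2q/(1-q)$ and the barrier satisfies $a + b\nu_1 < 2a$; the crossing probability is then $q = 1/(8ab)$, which exceeds $(1+ab/C_p')^{-p} \le (C_p'/(ab))^{p}$ as soon as $ab > (8{C_p'}^{p})^{1/(p-1)}$, for any fixed constant $C_p'$. The scaling can only be repaired by putting the conditional $2p$-th moments $\mathbb{E}[|X_s|^{2p}\mid\mathcal{F}_{s-1}]$ (suitably normalized) into the compensator in place of the conditional variances, which is what Khan's theorem actually does; with that barrier your potential-function expansion can be closed, since the remainder terms are then genuinely dominated by the drift. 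So the missing idea is not a sharper Taylor remainder or a cleverer handling of the overshoot below $-c$ --- it is that the $2p$-th moments must appear quantitatively in the crossing line itself, not merely as an integrability hypothesis.
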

	\noindent
	Applying inequality~\eqref{eq::DS_lp_ineq} with $M(t) =\lambda_j \left[S(t)-\mu N(t)\right]$, $\sum_{s=1}^t \nu_s  = \lambda_j^2\sigma^2 N(t)$, $a = \delta j$ and $b = 1$, we have the following bound,
	\begin{align*}
	   &\sum_{j \geq 2} \mathbb{P}\left(\exists t \geq 0,\lambda_j \left[S(t)-\mu N(t)\right] - \lambda_j^2\sigma^2 N(t) > \delta j  \right) \\
	   &\leq \sum_{j \geq 2} \frac{1}{\left(1 + \delta j / C_p'\right)^p} \\
	   & \leq  \frac{C_p^{'p}}{\delta^p} \sum_{j \geq 2} \frac{1}{j^p} \quad := \quad  \frac{C''_{p}}{\delta^p}.
	\end{align*}
	Similarly, it can be shown that
	\[
	\mathbb{P} \left( \bigcup_{t = 1}^\infty \bigcup_{j =2}^\infty \left( H_t^{j} \cap F_t \cap  G_t^c \right) \right) 
	\leq \frac{C''_{p}}{\delta^p}.
	\]
	By combining  two bounds, we get that
	\begin{align*}
	\mathbb{P}\left(\exists t \geq 0 : N(t) > e, \frac{N(t)}{4\sigma^2}\left(\hat{\mu}_t - \mu\right)^2 > e\delta \log N(t)  \right)
	\leq \frac{2C''_{p}}{\delta^p},
	\end{align*}
	which implies the desired bound on the adaptive deviation probability in \eqref{eq::adaptive_deviation_ineq_finite_moment} with $C_p :=2(4e)^p C_p''$. This completes the proof of Lemma~\ref{lemma::adaptive_deviation_ineq_finite_moment}.

\subsection{Proof of Theorem~\ref{thm::bounds_in_fully_adap_finite_moment}} \label{subSec::Proof_of_second_them_finite_moment}
The proof of Theorem~\ref{thm::bounds_in_fully_adap_finite_moment} borrows arguments from \cite{jiao2017dependence}, like the following lower bound of $D_{f_q}$. 
\begin{lemma} \label{lemma::phi_diver_lower_bound}
	Let $P$, $Q$ be probability measures on $\mathcal{X}$ and let  $f : \mathcal{X} \mapsto \mathbb{R}$ be a function satisfying $\mathbb{E}_Q \left[f^p(X)\right] < \infty$ for some $p \geq 1$. Then, for $q$ such that $1/p + 1/q = 1$, we have
	\begin{equation}
	\frac{1}{q}D_{f_q} (P || Q)  \geq  \mathbb{E}_P \left[f(X)\right] - \mathbb{E}_Q \left[f(X)\right] - \mathbb{E}_Q \left[\frac{\left|f(X)\right|^p}{p}\right].
	\end{equation}
\end{lemma}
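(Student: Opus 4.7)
The plan is to derive this lower bound as a direct consequence of Young's inequality applied to the Radon–Nikodym derivative $g := \mathrm{d}P/\mathrm{d}Q$. First, I would dispense with the absolute continuity issue: if $P \not\ll Q$, then by convention $D_{f_q}(P\|Q) = \infty$ and the inequality is vacuous, so I may assume $P \ll Q$ and that $g$ is well-defined $Q$-a.s. Under this assumption, the key identity to exploit is
\[
\mathbb{E}_P[f(X)] - \mathbb{E}_Q[f(X)] \;=\; \mathbb{E}_Q\!\bigl[f(X)\bigl(g(X) - 1\bigr)\bigr],
\]
which rewrites the gap between the two expectations as a single integral against $Q$.

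Next, I would apply Young's inequality in the form $ab \le \tfrac{a^p}{p} + \tfrac{b^q}{q}$ for $a,b \ge 0$ and conjugate exponents $1/p + 1/q = 1$. Taking $a = |f(X)|$ and $b = |g(X) - 1|$ and using $f(X)(g(X)-1) \le |f(X)|\cdot|g(X)-1|$, I obtain
\[
\mathbb{E}_Q\!\bigl[f(X)(g(X)-1)\bigr] \;\le\; \mathbb{E}_Q\!\left[\frac{|f(X)|^p}{p}\right] + \mathbb{E}_Q\!\left[\frac{|g(X)-1|^q}{q}\right].
\]
The second term on the right is exactly $\tfrac{1}{q}D_{f_q}(P\|Q)$ by the definition $f_q(x) = |x-1|^q$. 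Rearranging yields the claimed inequality.

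The integrability hypothesis $\mathbb{E}_Q[|f(X)|^p] < \infty$ guarantees that the $\tfrac{1}{p}$-term is finite, and either both sides of the Young bound are finite (in which case the rearrangement is immediate) or $D_{f_q}(P\|Q) = \infty$ and the bound holds trivially. There is no real obstacle here: the whole argument is essentially one invocation of Young's inequality together with the change-of-measure identity, and it is a standard ingredient behind the variational representation \eqref{eq::phi_q_rep} of the $f_q$-divergence stated earlier. The only point requiring any care is the sign step $f(g-1) \le |f|\cdot|g-1|$, which is why the bound features $|f|^p$ rather than $f^p$ on the right-hand side.
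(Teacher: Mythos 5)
Your proof is correct and is essentially the intended argument: the paper does not prove this lemma itself but defers to \citet{jiao2017dependence}, where the bound is exactly the easy direction of the variational representation \eqref{eq::phi_q_rep}, obtained via the change-of-measure identity and the Fenchel--Young inequality applied to $|f|$ and $|\mathrm{d}P/\mathrm{d}Q - 1|$, just as you do. The only cosmetic caveat is that Young's inequality requires conjugate exponents $p,q>1$, so the stated case $p=1$ (hence $q=\infty$) is degenerate and implicitly excluded.
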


To apply the above inequality, we need the following bound on the expectation of the $2p$-norm of the stopped adaptive process, which is based on the adaptive deviation inequality in Lemma~\ref{lemma::adaptive_deviation_ineq_finite_moment}.
\begin{claim} \label{claim::martingale_adap_ineq_finite_moment}
	Under the assumptions of Theorem~\ref{thm::bounds_in_fully_adap_finite_moment},	for each $k \in[{\K}]$ and for any $\alpha \leq p$ we have that
	\begin{equation} \label{eq::martingale_adap_ineq_finite_moment}
    \left\|\frac{N_k(\tau)}{\log N_k(\tau)}\left(\frac{\hat{\mu}_k(\tau)-\mu_k}{\sigma_k} \right)^{2}\right\|_\alpha   \leq C_{\alpha,\epsilon},
	\end{equation}
	where $C_{\alpha,\epsilon}$ is a constant depending only on $\alpha$ and $\epsilon$.
\end{claim}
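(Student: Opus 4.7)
The plan is to derive the $\alpha$-norm bound by integrating the tail bound of Lemma~\ref{lemma::adaptive_deviation_ineq_finite_moment}, using the layer-cake formula $\mathbb{E}|X|^\alpha = \int_0^\infty \alpha \delta^{\alpha-1}\mathbb{P}(|X|>\delta)\,d\delta$. Since the random variable in question is non-negative, sign issues do not arise.

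First, I would observe that the moment assumption of Theorem~\ref{thm::bounds_in_fully_adap_finite_moment} states that each arm has a finite $2(p+\epsilon)$-norm. This is precisely the hypothesis needed to invoke Lemma~\ref{lemma::adaptive_deviation_ineq_finite_moment} \emph{with} $p$ \emph{replaced by} $p+\epsilon$. Doing so yields, for every $\delta \geq 0$,
\[
\mathbb{P}\left(\tilde{N}_k(\tau)\left(\frac{\hat{\mu}_k(\tau)-\mu_k}{\sigma_k}\right)^{2} \geq \delta\right) \leq \frac{C_{p+\epsilon}}{\delta^{p+\epsilon}},
\]
where $C_{p+\epsilon}$ is the constant produced by that lemma.

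Next, denote $X := \tilde{N}_k(\tau)\bigl(\hat{\mu}_k(\tau)-\mu_k\bigr)^2/\sigma_k^2 \geq 0$ and write
\[
\mathbb{E}[X^\alpha] = \int_0^\infty \alpha\delta^{\alpha-1}\mathbb{P}(X>\delta)\,d\delta \leq 1 + \int_1^\infty \alpha\delta^{\alpha-1}\cdot\frac{C_{p+\epsilon}}{\delta^{p+\epsilon}}\,d\delta.
\]
Because $\alpha \leq p$, the exponent $\alpha-1-(p+\epsilon)$ is at most $-1-\epsilon < -1$, so the tail integral converges and is bounded by $1/\epsilon$. Therefore
\[
\mathbb{E}[X^\alpha] \leq 1 + \frac{\alpha\,C_{p+\epsilon}}{\epsilon},
\]
and the claim follows with $C_{\alpha,\epsilon} := \bigl(1+\alpha C_{p+\epsilon}/\epsilon\bigr)^{1/\alpha}$, which depends only on $\alpha$ and $\epsilon$ (once $p$ is regarded as a fixed parameter of the overall theorem).

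There is no real obstacle: the entire content is already packed into Lemma~\ref{lemma::adaptive_deviation_ineq_finite_moment}. The only subtle point worth emphasizing in the write-up is why the extra moment slack $\epsilon$ is needed: it provides the strict polynomial tail decay of order $p+\epsilon > \alpha$ that guarantees integrability of $\delta^{\alpha-1}\mathbb{P}(X>\delta)$ at infinity for all $\alpha \leq p$. Without this margin, the borderline case $\alpha = p$ would yield a divergent logarithmic integral.
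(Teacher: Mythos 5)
Your proof is correct and follows essentially the same route as the paper: integrate the polynomial tail bound of Lemma~\ref{lemma::adaptive_deviation_ineq_finite_moment} via the layer-cake formula, using the $\epsilon$ of moment slack to make the tail integral converge even at $\alpha = p$. The one (minor) divergence is that you invoke the lemma with exponent $p+\epsilon$, so your constant $\bigl(1+\alpha C_{p+\epsilon}/\epsilon\bigr)^{1/\alpha}$ carries a dependence on $p$; the paper instead applies the lemma with exponent $\alpha+\epsilon$ (legitimate since $\alpha\le p$ implies the $2(\alpha+\epsilon)$-norm is also finite), yielding a constant that genuinely depends only on $(\alpha,\epsilon)$ — which matters because Theorem~\ref{thm::bounds_in_fully_adap_finite_moment} asserts that $C_{1,\epsilon}$ depends only on $\epsilon$ and not on $p$. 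Your argument is otherwise complete, and you correctly identify that the assumptions of the theorem supply the $N_k(\tau)\ge 3$ condition needed to apply the lemma at the rewound time.
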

\begin{proof}[Proof of Claim~\ref{claim::martingale_adap_ineq_finite_moment}]
    Since $\alpha \leq p$, arm $k$ also has a finite $2(\alpha + \epsilon)$-norm. Therefore, applying Lemma~\ref{lemma::adaptive_deviation_ineq_finite_moment} with $p = \alpha$, we get 
	\begin{equation} \label{eq::Fubini_lp} 
	\begin{aligned}
	\mathbb{E}\left[\frac{N_k(\tau)}{\log N_k(\tau)}\left(\frac{\hat{\mu}_k(\tau)-\mu_k}{\sigma_k} \right)^{2}\right]^\alpha
	&\leq 1 +  \int_{1}^{\infty}\mathbb{P}\left(\frac{N_k(\tau)}{\log N_k(\tau)}\left(\frac{\hat{\mu}_k(\tau)-\mu_k}{\sigma_k} \right)^{2} >  \delta^{1/\alpha}\right)\mathrm{d}\delta \\
	&\leq 1+ C_{\alpha+\epsilon}\int_{1}^{\infty}\frac{1}{\delta^{1+\epsilon/\alpha}}\mathrm{d}\delta 
	=1 +  \frac{C_{\alpha+\epsilon}}{\epsilon /\alpha }.
	\end{aligned}
	\end{equation}
    The claim  readily follows by letting $C_{\alpha, \epsilon}:= \left(1 +  \frac{C_{\alpha+\epsilon}}{\epsilon /\alpha}\right)^{1/\alpha}$.
\end{proof}

\noindent We now have all the components in place to complete the proof of Theorem~\ref{thm::bounds_in_fully_adap_finite_moment}.
	For $k \in [{\K}]$, set $   P_k = \mathcal{L}\left(\D_{\Tau} | {\kappa} = k\right)$, $Q = \mathcal{L}\left(\D_{\Tau}\right)$ and 
	\[
	f_k = \lambda\frac{N_{k}(\tau)}{\log N_{k}(\tau)} \left(\hat{\mu}_k(\tau)-\mu_k \right)^{2}.
	\]
	for a $\lambda >0$. 	Then, from Lemma~\ref{lemma::phi_diver_lower_bound}, we can lower bound $I_q\left(\kappa, \D_{\Tau}\right)$ as follows:
	\begin{align*}
	\frac{1}{q}I_q\left(\kappa, \D_{\Tau}\right) &= \sum_{k =  1}^{\K} \mathbb{P}(\kappa = k)  \left\{ \frac{1}{q}D_{f_q}\left(\mathcal{L}\left(\D_{\Tau} | {\kappa} = k\right) ||\mathcal{L}\left(\D_{\Tau}\right) \right)\right\} \\
	& \geq  \sum_{k =  1}^{\K} \mathbb{P}(\kappa = k) \left\{ \mathbb{E}_{P_k} \left[f_k\right] - \mathbb{E}_{Q} \left[f_k\right] - \mathbb{E}_Q\left[\frac{\left|f_k\right|^p}{p}\right]\right\}   \\	
	& = \sum_{k = 1}^{\K} \mathbb{P}(\kappa = k)  \left\{\lambda\mathbb{E}\left[\frac{N_{k}(\tau)}{\log N_{k}(\tau)} \left(\hat{\mu}_k(\tau)-\mu_k \right)^{2} \mid \kappa = k\right] \right. \\
	&~~~~~~~~~~ \left.-\lambda\mathbb{E}\left[\frac{N_{k}(\tau)}{\log N_{k}(\tau)} \left(\hat{\mu}_k(\tau)-\mu_k \right)^{2} \right] -  \frac{\lambda^p}{p}\mathbb{E}\left[\frac{N_{k}(\tau)}{\log N_{k}(\tau)} \left(\hat{\mu}_k(\tau)-\mu_k \right)^{2} \right]^p\right\}\\
	& \geq \sum_{k = 1}^{\K} \mathbb{P}(\kappa = k)  \left\{\lambda\mathbb{E}\left[\frac{N_{k}(\tau)}{\log N_{k}(\tau)} \left(\hat{\mu}_k(\tau)-\mu_k \right)^{2} \mid \kappa = k\right] \right\}\\	&~~~~~~~~~~-\mathbb{P}(\kappa = k)\left\{  \lambda C_{1,\epsilon}\sigma_k^2  + (\lambda C_{p,\epsilon}\sigma_k^2)^p / p \right\}\\
	& =  \lambda\mathbb{E}\left[\frac{N_{\kappa}(\tau)}{\log N_{\kappa}(\tau)} \left(\hat{\mu}_\kappa(\tau)-\mu_\kappa \right)^{2}\right] -  \left(\lambda C_{1,\epsilon}\|\sigma_\kappa\|_2^2  + \frac{\lambda^p C_{p,\epsilon}^p}{p}\|\sigma_\kappa\|_{2p}^{2p}\right).
	\end{align*}
	Since this inequality holds for any $\lambda >0$, we get
	\begin{align*}
	  \mathbb{E}\left[\frac{N_{\kappa}(\tau)}{\log N_{\kappa}(\tau)} \left(\hat{\mu}_\kappa(\tau)-\mu_\kappa \right)^{2}\right] &= C_{1,\epsilon} \|\sigma_\kappa\|_2^2 + \inf_{\lambda >0} \frac{1}{\lambda}\left\{ \frac{I_q\left(\kappa, \D_{\Tau}\right)}{q}  +  \frac{\lambda^p C_{p,\epsilon}^p}{p}\|\sigma_\kappa\|_{2p}^{2p}\right\} \\
	  & =  C_{1,\epsilon}\|\sigma_\kappa\|_2^2 + C_{p,\epsilon}\|\sigma_\kappa\|_{2p}^2  I_q^{1/q}\left(\kappa, \D_{\Tau}\right),
	\end{align*}
thus completing the proof of the theorem.

We  conclude this section with a short proof of Corollary~\ref{cor::bounds_in_finite_moment}.

\subsection{Proof of Corollary~\ref{cor::bounds_in_finite_moment}} \label{subSec::proof_of_bound_in_finite_moment}

	For any $p, q >1$ with $\frac{1}{p} + \frac{1}{q} = 1$,  H\"older's inequality along with the bound on the adaptive risk in \eqref{eq::adap_risk_finite_moment}  implies that
	\begin{align*}
	\left[\mathbb{E} \left(\hat{\mu}_\kappa-\mu_\kappa\right)^{2/p} \right]^p 
	&= \left[\mathbb{E} \left(\frac{N_{\kappa}}{\log N_{\kappa}(\tau)}\right)^{-1/p} \left(\frac{N_{\kappa}}{\log N_{\kappa}(\tau)}\right)^{1/p}  \left(\hat{\mu}_\kappa-\mu_\kappa\right)^{2/p} \right]^p\\
	&\leq \left[\mathbb{E} \left(\frac{N_{\kappa}}{\log N_{\kappa}(\tau)}\right)^{-q/p} \right]^{p/q} \mathbb{E} \left[\frac{N_{\kappa}}{\log N_{\kappa}(\tau)} \left(\hat{\mu}_\kappa-\mu_\kappa\right)^{2} \right] \\  
	& \leq  \left[\mathbb{E} \left(\frac{N_{\kappa}}{\log N_{\kappa}(\tau)}\right)^{-q/p} \right]^{p/q}  \left[ C_{1,\epsilon}\|\sigma_\kappa\|_2^2 + C_{p,\epsilon}\|\sigma_\kappa\|_{2p}^2 I_q^{1/q}\left(\kappa, \D_{\Tau}\right) \right]\\
	& = \frac{1}{\tilde{n}^{\eff,q/p}}  \left[ C_{1,\epsilon}\|\sigma_\kappa\|_2^2 + C_{p,\epsilon}\|\sigma_\kappa\|_{2p}^2 I_q^{1/q}\left(\kappa, \D_{\Tau}\right) \right].
	\end{align*}
	By setting $r := 1/p$, we infer inequality \eqref{eq::1/p-norm_bound_finite_moment},  completing the proof.  

\section{Proofs of Theorem~\ref{thm::bound_on_Bregman} and related statements} \label{sec::proof_of_first_theorem}
The proof of Theorem~\ref{thm::bound_on_Bregman} is based on the deviation inequality given in Lemma~\ref{lemma::deviation_ineq}, which we prove first. 
\subsection{Proof of Lemma~\ref{lemma::deviation_ineq}} \label{subSec::proof_of_deviation_ineq}
The proof of the deviation inequality in Lemma~\ref{lemma::deviation_ineq} is based on the following bound on the expectation of the exponential of the stopped process. Similar versions of this bound has been exist in the literature: see, e.g., see \cite{garivier2011kl, howard2020time}. For the completeness, we provide the proof of the bound. 
\begin{claim}\label{claim::martingale_ineq}
	Under the assumptions of Theorem~\ref{thm::bound_on_Bregman},  for any $\lambda \in \Lambda$, it holds that
	\begin{equation}
		\mathbb{E}	\left[\exp\left\{\lambda \left( S_k(\Tau)- \mu_k N_k(\Tau) \right)- N_k(\Tau)\psi(\lambda)  \right\} \right] \leq 1.
	\end{equation}
\end{claim}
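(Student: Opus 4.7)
The plan is to identify the process
\[
M_k(t) := \exp\bigl\{\lambda\left(S_k(t) - \mu_k N_k(t)\right) - N_k(t)\psi(\lambda)\bigr\}, \qquad t \geq 0,
\]
(with the convention $M_k(0) = 1$) as a nonnegative supermartingale with respect to $\{\F_t\}$ for each fixed $\lambda \in \Lambda$, and then invoke the optional stopping theorem at $\Tau$.

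The key step is verifying the supermartingale property. I would write
\[
\frac{M_k(t)}{M_k(t-1)} = \exp\bigl\{\mathbbm{1}(A_t = k)\bigl[\lambda(Y_t - \mu_k) - \psi(\lambda)\bigr]\bigr\},
\]
using that $S_k(t) - S_k(t-1) = \mathbbm{1}(A_t = k) Y_t$ and $N_k(t) - N_k(t-1) = \mathbbm{1}(A_t = k)$. Since $A_t$ is $\F_{t-1}$-predictable and $Y_t$, conditional on $A_t$, is an independent draw from $P_{A_t}$, I split into two cases: on $\{A_t \neq k\}$ the ratio equals $1$; on $\{A_t = k\}$ the conditional expectation equals $\mathbb{E}_{Y \sim P_k}[e^{\lambda(Y-\mu_k)}] \cdot e^{-\psi(\lambda)} \leq 1$ by the sub-$\psi$ assumption on arm $k$. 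Combining, $\mathbb{E}[M_k(t) \mid \F_{t-1}] \leq M_k(t-1)$, so $\{M_k(t)\}$ is a nonnegative $\{\F_t\}$-supermartingale with $\mathbb{E}[M_k(0)] = 1$.

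To conclude, I would apply the optional stopping argument in the standard safe form for nonnegative supermartingales: the stopped process $\{M_k(t \wedge \Tau)\}$ is itself a nonnegative supermartingale, hence $\mathbb{E}[M_k(t \wedge \Tau)] \leq 1$ for every deterministic $t$. Taking $t \to \infty$ and using Fatou's lemma (together with $\Tau < \infty$ almost surely, which is part of the definition of a stopping time in the paper's set-up, or the convention $M_k(\infty) := \liminf_t M_k(t)$) yields $\mathbb{E}[M_k(\Tau)] \leq 1$, which is the desired bound.

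I do not anticipate a real technical obstacle here; this is the standard exponential-martingale construction, and the only subtlety worth flagging explicitly is making sure the $\F_{t-1}$-measurability of $A_t$ (predictability) is used to pull the indicator $\mathbbm{1}(A_t = k)$ outside the conditional expectation before invoking the sub-$\psi$ bound, and that the optional stopping step is carried out through the bounded stopped process plus Fatou rather than by quoting a version of the theorem that requires uniform integrability (which we do not a priori have).
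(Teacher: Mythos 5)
Your proposal is correct and follows essentially the same route as the paper: both identify $\exp\{\lambda(S_k(t)-\mu_k N_k(t)) - N_k(t)\psi(\lambda)\}$ as a nonnegative supermartingale by conditioning on $\F_{t-1}$, pulling the indicator $\mathbbm{1}(A_t=k)$ out via predictability, and invoking the sub-$\psi$ bound on arm $k$, then conclude by optional stopping. Your explicit handling of the stopping step via the stopped process plus Fatou's lemma is a slightly more careful rendering of the paper's appeal to the optional stopping theorem, but it is the same argument.
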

\begin{proof}[Proof of Claim~\ref{claim::martingale_ineq}]
	Set $L_t^k(\lambda) := \exp\left\{ \lambda\left( S_k(t)- \mu_k N_k(t) \right) - N_k(t)\psi(\lambda)  \right\}$. First note that, for any $t \geq 0$,  
	\begin{align*}
		&\mathbb{E}\left[\exp\left\{ \lambda \left[\left( S_k(t+1)- \mu_k N_k(t+1) \right) - \left( S_k(t)- \mu_k N_k(t) \right)\right] \right\} \mid \mathcal{F}_t   \right] \\
		&=	\mathbb{E}\left[\exp\left\{ \lambda  \mathbbm{1}(A_{t+1} = k) \left[Y_{t+1} - \mu_k\right] \right\} \mid \mathcal{F}_t   \right] \\
		&=	 \mathbb{E}\left[ \mathbbm{1}(A_{t+1} = k)\exp\left\{ \lambda  \left(Y_{t+1} - \mu_k\right) \right\}  +  \mathbbm{1}(A_{t+1} \neq k) \mid \mathcal{F}_t   \right] \\
		&=	\mathbbm{1}(A_{t+1} = k) \mathbb{E}\left[ \exp\left\{ \lambda  \left(Y_{t+1} - \mu_k\right) \right\} \mid \mathcal{F}_t   \right] +  \mathbbm{1}(A_{t+1} \neq k) 
		~~\text{(since $\mathbbm{1}\left(A_{t+1}= k \right) \in \mathcal{F}_t$.)}\\
		& \leq 	\mathbbm{1}(A_{t+1} = k) \exp\left\{\psi(\lambda) \right\} +  \mathbbm{1}(A_{t+1} \neq k)  
		~~\text{(since $k$-the distribution is sub-$\psi$.)}\\
		& = \exp\left\{	\mathbbm{1}(A_{t+1} = k)  \psi(\lambda) \right\} \\
		& = \exp\left\{	\left[N_k(t+1) - N_k(t) \right]\psi(\lambda) \right\}.
	\end{align*}
	Thus, we obtain that
	\begin{align*}
		&\mathbb{E}\left[L_{t+1}^k(\lambda) \mid \mathcal{F}_{t}\right]  \\
		&= \mathbb{E}\left[\exp\left\{ \lambda\left( S_k(t+1)- \mu_k N_k(t+1) \right) - N_k(t+1)\psi(\lambda)  \right\} \mid \mathcal{F}_t\right] \\
		&=  \mathbb{E}\left[\exp\left\{ \lambda \left[\left( S_k(t+1)- \mu_k N_k(t+1) \right) - \left( S_k(t)- \mu_k N_k(t) \right)\right] \right\} \mid \mathcal{F}_t   \right]  \\
		&~~~~\cdot \exp\left\{ \lambda\left( S_k(t)- \mu_k N_k(t) \right) -  N_k(t+1)\psi(\lambda) \right\}
		~~\text{(since $S_k(t), N_k(t), N_k(t+1) \in \mathcal{F}_t$.)}\\
		&\leq  \exp\left\{	\left[N_k(t+1) - N_k(t) \right]\psi(\lambda) \right\} \exp\left\{ \lambda\left( S_k(t)- \mu_k N_k(t) \right) -  N_k(t+1)\psi(\lambda) \right\}\\ 
		&\leq \exp\left\{ \lambda\left( S_k(t)- \mu_k N_k(t) \right) - N_k(t)\psi(\lambda)  \right\} \\
		& = L_t^k(\lambda).
	\end{align*}
	In particular, 
	\begin{align*}
		\mathbb{E}\left[L_{1}^k(\lambda) \mid \mathcal{F}_{0}\right] =1 := L_0^k,~~\forall \lambda \in \Lambda.
	\end{align*}
	Therefore $\{L_t^k(\lambda)\}_{t \geq 0}$ is a non-negative super-martingale, and the result follows from  the optional stopping theorem. 
\end{proof}

Returning to the proof of Lemma~\ref{lemma::deviation_ineq}, we first consider the case $\mathbb{P}(\Tau \leq M) = 1$ for some constant $M > 0$. Since $N_k(\Tau) \leq \Tau$, we must also have that $\mathbb{P}(N_k(\Tau) \leq M) = 1$. Next, for any $\epsilon \geq 0$ and $\lambda \in [0,\lambda_{\max} / p ) \subset \Lambda $, we have
\begin{align*}
	\mathbb{P} \left(  S_k(\Tau) / N_k(\Tau)  -\mu_k \geq \epsilon \right) 
	& = \mathbb{P}\left(  S_k(\Tau) \geq N_k(\Tau) (\epsilon + \mu_k) \right) \\
	&=  \mathbb{P}\left( \exp\left\{\lambda S_k(\Tau) - \lambda (\epsilon + \mu_k) N_k(\Tau)   \right\} \geq 1 \right)\\
	& \leq\mathbb{E}\left[\exp\left\{\lambda S_k(\Tau) - \lambda (\epsilon+ \mu_k) N_k(\Tau) \right\} \right],
\end{align*}
where in the final step we have used Markov's inequality. By
using  H\"older's inequality with any conjugate pairs $p, q > 1$ with $1/p + 1/q = 1$, the last term can be  bounded as follows:
\begin{align*}
	&\mathbb{E}\left[\exp\left\{\lambda \left( S_k(\Tau)- \mu_k N_k(\Tau) \right)- \lambda \epsilon N_k(\Tau) \right\} \right] \\
	&= \mathbb{E}\left[\exp\left\{\lambda \left( S_k(\Tau)- \mu_k N_k(\Tau) \right) - \frac{N_k(\Tau)}{p} \psi(p\lambda)  \right\} \exp\left\{N_k(\Tau) \left(\frac{1}{p}\psi(p\lambda) - \lambda \epsilon   \right) \right\}\right] \\\
	&\leq\left[\mathbb{E}\exp\left\{p\lambda \left( S_k(\Tau)- \mu_k N_k(\Tau) \right)- N_k(\Tau)\psi(p\lambda)  \right\} \right]^{1/p}\left[\mathbb{E} \exp\left\{qN_k(\Tau) \left(\frac{1}{p}\psi(p\lambda)- \lambda \epsilon\right) \right\}\right]^{1/q}\\
	&\leq  \left[\mathbb{E} \exp\left\{qN_k(\Tau) \left(\frac{1}{p}\psi(p\lambda) - \lambda \epsilon  \right) \right\}\right]^{1/q} .
\end{align*}
where  the last inequality follows from Claim~\ref{claim::martingale_ineq}. Thus we have established the following intermediate bound on the deviation probability:
\begin{equation}
	\mathbb{P} \left(  S_k(\Tau) / N_k(\Tau)  -\mu_k \geq \epsilon \right)  \leq \left[\mathbb{E} \exp\left\{qN_k(\Tau) \left(\frac{1}{p}\psi(p\lambda) - \lambda \epsilon  \right) \right\}\right]^{1/q}.
\end{equation}
Since $\epsilon \geq 0$, the convex conjugate of $\psi$ at $\epsilon$ can be written as
\[
\psi^*(\epsilon) = \sup_{\lambda\in \Lambda} \left\{\lambda \epsilon-  \psi(\lambda)\right\} = \sup_{\lambda\in [0, \lambda_{\max})} \left\{\lambda \epsilon-  \psi(\lambda)\right\}.
\]
Thus,
\begin{align*}
	\sup_{\lambda \in [0, \lambda_{\max}/p)} \left\{\lambda \epsilon - \frac{1}{p} \psi(p\lambda)\right\} &=\frac{1}{p}\sup_{\lambda\in [0, \lambda_{\max})} \left\{\lambda \epsilon-  \psi(\lambda)\right\} \\
	&=\frac{1}{p}\sup_{\lambda\in \Lambda} \left\{\lambda \epsilon-  \psi(\lambda)\right\} \\
	& = \frac{1}{p} \psi^*(\epsilon).
\end{align*}
Using this identity, the deviation probability can be further bounded as
\begin{align*}
	&\mathbb{P} \left(  S_k(\Tau) / N_k(\Tau)  -\mu_k \geq \epsilon \right) \\
	&\leq \inf_{\lambda \in [0, \lambda_{\max}/p)} \left[\mathbb{E} \exp\left\{qN_k(\Tau) \left(\frac{1}{p}\psi(p\lambda) - \lambda \epsilon  \right) \right\}\right]^{1/q} \\
	&= \left[\mathbb{E} \exp\left\{-qN_k(\Tau) \sup_{\lambda \in [0, \lambda_{\max}/p)} \left(\lambda \epsilon -\frac{1}{p}\psi(p\lambda)  \right) \right\}\right]^{1/q}\\
	&= \left[ \mathbb{E} \exp\left\{-\frac{q}{p} \psi^*(\epsilon)N_k(\Tau)\right\}\right]^{1/q} .
\end{align*}
Using the same argument, it also follows that 
\[
\mathbb{P} \left(  S_k(\Tau) / N_k(\Tau)  -\mu_k \leq -\epsilon \right) \leq \left[ \mathbb{E} \exp\left\{-\frac{q}{p} \psi^*(-\epsilon)N_k(\Tau)\right\}\right]^{1/q}. 
\]	
Since $\psi^*$ is a non-negative convex function with $\psi^*(0) = 0$, for any $\delta \geq0$, there exist $\epsilon_{1}, \epsilon_{2} \geq 0$ with $\psi^*(\epsilon_{1}) = \psi^*(- \epsilon_{2})  = \delta$ such that 
\[
\left\{z \in \mathbb{R} : \psi^*(z) \geq \delta  \right\} = \left\{z \in \mathbb{R} :  z  \geq \mu_k + \epsilon_{1}, z \leq \mu_k-\epsilon_{2}  \right\}.
\]
Therefore, for any $\delta \geq 0$ and  $p, q >1$ with $1/p + 1/q = 1$, we conclude that
\begin{align*}
	\mathbb{P}\left(D_{\psi^*_{\mu_k}}(\hat{\mu}_k(\Tau) , \mu_k) \geq \delta \right) 
	&= \mathbb{P}\left(\psi^*_{\mu_k}(S_k(\Tau) / N_k(\Tau)) \geq \delta\right) ~~\text{(By the equality~\eqref{eq::KL_equiv_psi_star} in Fact~\ref{fact::KL_equiv_psi_star}.)} \\
	& \leq   \mathbb{P}\left(S_k(\Tau) / N_k(\Tau))   - \mu_k\geq  \epsilon_{1} \right)  + \mathbb{P}\left(S_k(\Tau) / N_k(\Tau))  - \mu_k\leq - \epsilon_2 \right)  \\
	&\leq 2 \left[ \mathbb{E} \exp\left\{-\frac{q}{p} \delta N_k(\Tau)\right\}\right]^{1/q}.
\end{align*}
For general $\Tau$, let $\Tau_M := \min\left\{\Tau, M  \right\}$ for all $M >0$. Since $\Tau_M$ is a stopping time with $\mathbb{P}(\Tau_M \leq M) = 1$, we have 
\begin{equation}\label{eq::deviation_finite_tau}
	\mathbb{P}\left(D_{\psi^*_{\mu_k}}(\hat{\mu}_k(\Tau_M) , \mu_k) \geq \delta \right)  \leq 2 \left[ \mathbb{E} \exp\left\{-\frac{q}{p} \delta N_k(\Tau_M)\right\}\right]^{1/q}
\end{equation}
for any $\delta \geq 0$ and  $p, q >1$ with $1/p + 1/q = 1$. Then, we have 
\begin{align*}
	\mathbb{P}\left(D_{\psi^*_{\mu_k}}(\hat{\mu}_k(\Tau) , \mu_k) \geq \delta \right)  
	& \leq \liminf_{M \rightarrow \infty} \mathbb{P}\left(D_{\psi^*_{\mu_k}}(\hat{\mu}_k(\Tau_M) , \mu_k) \geq \delta \right)  \\
	& \leq \liminf_{M \rightarrow \infty} 	2 \left[ \mathbb{E} \exp\left\{-\frac{q}{p} \delta N_k(\Tau_M)\right\}\right]^{1/q} \\
	& \leq 2 \left[ \mathbb{E} \exp\left\{-\frac{q}{p} \delta N_k(\Tau)\right\}\right]^{1/q},			
\end{align*}
where the first inequality comes from the Fatous's lemma and the continuity of the Bregman divergence, the second one from the inequality \eqref{eq::deviation_finite_tau} and the last one  from the monotone convergence theorem, along with the facts that
\[
0 \leq \exp\left\{-\frac{q}{p} \delta N_k(\Tau_M)\right\} \leq 1,~~\forall M >0,
\]
and that
$\exp\left\{-\frac{q}{p} \delta N_k(\Tau_M)\right\} $ is decreasing in $M$ and converges almost surely to $\exp\left\{-\frac{q}{p} \delta N_k(\Tau)\right\}$ as $M \rightarrow \infty$.

Finally, from the identity $q/p = q-1$, we have that
\begin{equation}
	\mathbb{P}\left(D_{\psi^*_{\mu_k}}(\hat{\mu}_k(\Tau) , \mu_k) \right)\leq 2 \inf_{q > 1} \left[ \mathbb{E} \exp\left\{-(q-1) \delta N_k(\Tau)\right\}\right]^{1/q}.
\end{equation}
Since choosing $q =1$  gives a valid, albeit trivial, bound, we can take the infimum over $q \geq 1$, which proves the first inequality in \eqref{eq::deviation_ineq}.  The second inequality follows from the assumption that $N_k(\Tau) \geq b$ and the inequality
\begin{align*}
	2 \inf_{q \geq 1} \left[ \mathbb{E} \exp\left\{-(q-1) \delta N_k(\Tau)\right\}\right]^{1/q} 
	& \leq 2 \inf_{q \geq 1} \left[  \exp\left\{-(q-1) \delta b\right\}\right]^{1/q} \\
	& = 2\exp\left\{-\delta b \right\}.
\end{align*}
This completes the proof of Lemma~\ref{lemma::deviation_ineq}.
\subsection{Proof of Theorem~\ref{thm::bound_on_Bregman}} 
\label{subSec::proof_of_bounds_on_Bregman}
In Lemma~\ref{lemma::deviation_ineq}, we have established  the the deviation inequality
\begin{equation} \label{eq::deviation_p_q}
	\mathbb{P}\left(D_{\psi^*_{\mu_k}}(\hat{\mu}_k(\Tau) , \mu_k) \geq \delta \right)  \leq 2 \left[ \mathbb{E} \exp\left\{-\frac{q}{p} \delta N_k(\Tau)\right\}\right]^{1/q},			
\end{equation}
for any $p, q > 1$ with $\frac{1}{p} + \frac{1}{q} = 1$. We first prove Theorem~\ref{thm::bound_on_Bregman} by consider the case of $\mathbb{P}(\Tau \leq M) = 1$ for a  $M > 0$. Since $N_k(\Tau) \leq \Tau$, we then have that $\mathbb{P}(N_k(\Tau) \leq M) = 1$. 	By using the above deviation inequality and the well-known identity  $\mathbb{E}|X| = \int_{0}^\infty \mathbb{P}(|X| > \delta) \mathrm{d}\delta$ for any integrable random variable $X$, we have
\begin{align*}
	&\mathbb{E} D_{\psi^*_{\mu_k}}\left(\hat{\mu}_k(\Tau), \mu_k\right)\\
	&=\mathbb{E} D_{\psi^*_{\mu_k}}\left(S_k(\Tau) / N_k(\Tau), \mu_k\right)  \\
	& = \int_{0}^{\infty} \mathbb{P}\left(D_{\psi^*_{\mu_k}}\left(S_k(\Tau) / N_k(\Tau), \mu_k\right) > \delta \right) \mathrm{d} \delta  ~~\text{(since the divergence is non-negative).}\\
	& \leq 2  \int_{0}^{\infty}\left[ \mathbb{E} \exp\left\{-\frac{q}{p} \delta N_k(\Tau)  \right\}\right]^{1/q} \mathrm{d} \delta ~~\text{(by the deviation inequality  \eqref{eq::deviation_p_q}).}\\
	& = 2 \frac{ep}{b}  \int_{0}^{\infty}\left[ \mathbb{E} \exp\left\{-\frac{q}{p} \left(N_k(\Tau) -b/e \right) \delta \right\}\right]^{1/q} \frac{b}{ep}  \exp\left\{-\frac{b}{ep}\delta\right\} \mathrm{d} \delta \\
	& := 2 \frac{ep}{b} \int_{0}^{\infty} \left[f(\delta)\right]^{1/q} p(\delta) \mathrm{d}\delta,
\end{align*}
where we have set $f(\delta) = \mathbb{E} \exp\left\{-\frac{q}{p} \left(N_k(\Tau) -b/e \right) \delta \right\}$ and $p(\delta) =  \frac{ep}{b}  \exp\left\{- \frac{ep}{b}\delta\right\}$.
Note that $p$ is the Lebesgue density of a probability measure on $[0, \infty)$. Since $\delta \mapsto \delta^{1/q}$ is a concave function on $[0, \infty)$, using Jensen's inequality we have that
\begin{align*}
	\int_{0}^{\infty} \left[f(\delta)\right]^{1/q} p(\delta) \mathrm{d}\delta \leq \left[ \int_{0}^{\infty} f(\delta)p(\delta) \mathrm{d}\delta \right]^{1/q}.
\end{align*}
Therefore, $\mathbb{E} D_{\psi^*_{\mu_k}}\left(\hat{\mu}_k(\Tau), \mu_k\right)$ can be further bounded by
\begin{align*}
	& 2 \frac{ep}{b} \left[\int_{0}^{\infty} f(\delta) p(\delta) \mathrm{d}\delta \right]^{1/q}\\
	& = 2 \frac{ep}{b}  \left[\int_{0}^{\infty} \mathbb{E} \exp\left\{-\frac{q}{p} \left(N_k(\Tau) -b/e \right) \delta \right\} \frac{b}{ep}  \exp\left\{-\frac{b}{ep}\delta\right\} \mathrm{d} \delta \right]^{1/q}\\
	& = 2 \left(\frac{ep}{b}\right)^{1/p}  \left[ \mathbb{E}\int_{0}^{\infty} \exp\left\{-\frac{q}{p} \left(N_k(\Tau) -\frac{b}{ep} \right) \delta \right\}   \mathrm{d} \delta \right]^{1/q} \\
	& = 2 \left(\frac{ep}{b}\right)^{1/p}  \left[ \mathbb{E} \frac{1}{\frac{q}{p} \left(N_k(\Tau) -\frac{b}{ep} \right) }\right]^{1/q}~~\left(\text{since $N_k(\Tau) \geq b > \frac{b}{ep}$}\right) \\
	& = 2 \left(\frac{e}{b}\right)^{1/p} p  \left[ \mathbb{E} \frac{1}{q \left(N_k(\Tau) -\frac{b}{ep} \right) }\right]^{1/q} \\
	&\leq2\left(\frac{e}{b}\right)^{1/p}  p  \left[ \mathbb{E} \frac{1}{N_k(\Tau)   }\right]^{1/q},
\end{align*}
where in the last inequality we have used the bound
\begin{align*}
	N_k(\Tau) -\frac{b}{ep}  = \frac{1}{p} \left(N_k(\Tau) -b/e\right) + \frac{N_k(\Tau)}{q} > \frac{N_k(\Tau)}{q}. 
\end{align*}
Thus, for any $p, q > 1$ with $1/p + 1/q = 1$, we have shown that
\begin{equation}
	\mathbb{E} D_{\psi^*_{\mu_k}}\left(\hat{\mu}_k(\Tau) , \mu_k\right)   \leq 2\left(\frac{e}{b}\right)^{1/p}  p  \left[ \mathbb{E} \frac{1}{N_k(\Tau)   }\right]^{1/q}.	
\end{equation}
Since the above bound holds for any $p, q > 1$ with $1/p + 1/q = 1$, by taking infimum over all $p>1$, we then have that
\begin{align*}
	\mathbb{E} D_{\psi^*_{\mu_k}}\left(\hat{\mu}_k(\Tau) , \mu_k\right)   &\leq 2 \inf_{p > 1}\left(\frac{e}{b}\right)^{1/p}  p  \left[ \mathbb{E} \frac{1}{N_k(\Tau)   }\right]^{1-1/p} 	 \\
	& =  \frac{2}{ n^{\eff}} \inf_{p > 1}  p  \left(\frac{en^{\eff}}{b}\right)^{1/p} \\
	& =  \frac{2}{ n^{\eff}}   \exp\left\{ \inf_{p > 1} \left[\log p + \frac{1}{p}\log(en^\eff / b)\right]\right\} \\
	& =  \frac{2}{ n^{\eff}}   \exp\left\{ \log \log(en^\eff / b) + 1\right\} \\
	& =  2e\frac{1 + \log (n_k^{\eff} / b) }{n_k^{\eff}},
\end{align*}
where the second equality is justified by the continuity of the exponential and logarithmic functions. The third equality follows from the fact that if $a \geq e$,  
\[
\log p + \frac{1}{p}\log a \geq \log \log a + 1,~~\forall p \geq 1,
\]
with equality if and only if $p = \log a$. Since, by assumption $N_k(\Tau) \geq b$, we have that $n_k^\eff \geq b$ and therefore we can set $p = \log(en^\eff / b) \geq 1$. Thus, the first part of the claimed bound on the risk in \eqref{eq::bound_on_Bregman} is proven.

To prove the second part of the upper bound, we use the deviation inequality in a slightly different way which is motivated from the proof of Theorem 12.1. in \cite{pena2008self}. Specifically, for any $\epsilon > 0$ and $r > 1$, we have
\begin{align*}
	&\mathbb{E} D_{\psi^*_{\mu_k}}\left(\hat{\mu}_k(\Tau), \mu_k\right)\\
	&=\mathbb{E} D_{\psi^*_{\mu_k}}\left(S_k(\Tau) / N_k(\Tau), \mu_k\right)  \\
	& = \int_{0}^{\infty} \mathbb{P}\left(D_{\psi^*_{\mu_k}}\left(S_k(\Tau) / N_k(\Tau), \mu_k\right) > \delta \right) \mathrm{d} \delta\\
	& \leq \epsilon + \int_{\epsilon}^{\infty} \mathbb{P}\left(D_{\psi^*_{\mu_k}}\left(S_k(\Tau) / N_k(\Tau), \mu_k\right) > \delta \right) \mathrm{d} \delta\\
	& \leq \epsilon + 2  \int_{\epsilon}^{\infty}\left[ \mathbb{E} \exp\left\{-\frac{q}{p} \delta N_k(\Tau)  \right\}\right]^{1/q} \mathrm{d} \delta ~~\text{(by the deviation inequality  \eqref{eq::deviation_p_q}).}\\
	& = \epsilon + 2  \int_{\epsilon}^{\infty} \delta^{-r}\left[ \mathbb{E}\delta^{qr} \exp\left\{-\frac{q}{p} \delta N_k(\Tau)  \right\}\right]^{1/q} \mathrm{d} \delta \\
	& \leq \epsilon + 2  \int_{\epsilon}^{\infty} \delta^{-r}\left[ \mathbb{E} \sup_{\tau > 0} \tau^{qr} \exp\left\{-\frac{q}{p} \tau N_k(\Tau)  \right\}\right]^{1/q} \mathrm{d} \delta .
\end{align*}
It can be easily checked that the supremum is achieved at $\tau = \frac{pr}{N_k(\Tau)}$. Therefore we have that
\[
\sup_{\tau > 0} \tau^{qr} \exp\left\{-\frac{q}{p} \tau N_k(\Tau)  \right\} = \left(\frac{pr}{N_k(\Tau)}\right)^{qr} e^{-qr},
\]
which implies that
\begin{align*}
	\mathbb{E} D_{\psi^*_{\mu_k}}\left(\hat{\mu}_k(\Tau), \mu_k\right)
	& \leq \epsilon + 2  \int_{\epsilon}^{\infty} \delta^{-r}\left[ \mathbb{E} \sup_{\delta > 0} \delta^{qr} \exp\left\{-\frac{q}{p} \delta N_k(\Tau)  \right\}\right]^{1/q} \mathrm{d} \delta\\
	& = \epsilon + 2  \int_{\epsilon}^{\infty} \delta^{-r}\left[ \mathbb{E} \left(\frac{pr}{N}\right)^{qr} e^{-qr}\right]^{1/q} \mathrm{d} \delta \\
	& = \epsilon + 2 \left(\frac{pr}{e}\right)^{r}\left[ \mathbb{E} \left(\frac{1}{N^{qr}}\right) \right]^{1/q} \int_{\epsilon}^{\infty} \delta^{-r} \mathrm{d} \delta \\
	& = \epsilon +  \frac{2}{(r-1)\epsilon^{r-1}} \left(\frac{pr}{en^{\eff, qr}}\right)^{r}.
\end{align*}
Since the above bound holds for any $\epsilon > 0$, by taking infimum on the RHS over $\epsilon >0$, we have the following upper bound. 
\[
\mathbb{E} D_{\psi^*_{\mu_k}}\left(\hat{\mu}_k(\Tau), \mu_k\right) \leq \frac{2^{1/r} pr^2}{e(r-1)} \frac{1}{n^{\eff, qr}}.
\]
By setting $r' = qr$, we can write the above inequality as
\begin{equation}
	\mathbb{E} D_{\psi^*_{\mu_k}}\left(\hat{\mu}_k(\Tau), \mu_k\right) \leq C_{q,r'} \frac{1}{n^{\eff, r'}},
\end{equation}
where 
\[
C_{q,r'} = \frac{2^{q/r'}}{e} \frac{r'^2}{(r'-q)(q-1)}.\]
Since this upper bound holds for any choice of $r' > q > 1$,  the second part of the upper bound is proved.

For general $\Tau$, let $\Tau_M := \min\left\{\Tau, M  \right\}$ for all $M \geq t_0$.  Since $\Tau_M$ is a stopping time with $T_M \geq t_0$ and $\mathbb{P}(\Tau_M \leq M) = 1$, we have that 
\begin{equation}\label{eq::risk_finite_tau}
	\mathbb{E} D_{\psi^*_{\mu_k}}\left(\hat{\mu}_k(\Tau_M) , \mu_k\right) \leq  \min\left\{2e\frac{1 + \log (n_k^{\eff\wedge M} / b) }{n_k^{\eff}}, \inf_{r>1} \frac{ C_{r}}{n_k^{\eff\wedge M, r}}\right\},~~\forall M \geq t_0,
\end{equation}
where $n_k^{\eff\wedge M,r}$ is the corresponding effective sample size  $\left[\mathbb{E}\left[1/N_k^r(\Tau_M)\right]\right]^{-r}$ with $n_k^{\eff\wedge M} = n_k^{\eff\wedge M,1}$. Then, we have
\begin{align*}
	\mathbb{E} D_{\psi^*_{\mu_k}}\left(\hat{\mu}_k(\Tau) , \mu_k\right) 
	& \leq \liminf_{M \rightarrow \infty} \mathbb{E} D_{\psi^*_{\mu_k}}\left(\hat{\mu}_k(\Tau_M) , \mu_k\right) \\
	& \leq \liminf_{M \rightarrow \infty} \min\left\{2e\frac{1 + \log (n_k^{\eff\wedge M} / b) }{n_k^{\eff}}, \inf_{r>1} \frac{ C_{r}}{n_k^{\eff\wedge M, r}}\right\} \\
	& \leq \min\left\{2e\frac{1 + \log (n_k^{\eff} / b) }{n_k^{\eff}}, \inf_{r>1} \frac{ C_{r}}{n_k^{\eff, r}}\right\},
\end{align*}
as desired, where the first inequality comes from  Fatous's lemma, the second one follows from the inequality \eqref{eq::risk_finite_tau} and the last one comes from the monotone convergence theorem along with the facts that $0 \leq 1 / N_k^r(\Tau_M)\leq 1/ b^r$ for all $M\geq t_0$ and that $\{1 / N_k^r(\Tau_M)\}_{M \geq t_0}$ is  a non-negative decreasing sequence converging to $1/N_k^r(\Tau)$ almost surely which also implies $n_k^{\eff\wedge M, r} \rightarrow n_k^{\eff, r}$ as $M \rightarrow \infty$. 

The proof of Theorem~\ref{thm::bound_on_Bregman} is completed. In the following subsection, we present a simple proof of Corollary~\ref{cor::bound_on_bias_l1_risk}.

\subsection{Proof of Corollary~\ref{cor::bound_on_bias_l1_risk}} \label{subSec::cor_bound_on_bias_l1_risk}
By the equation \eqref{eq::KL_equiv_psi_star}, we have that
\begin{align*}
	D_{\psi_{\mu_k}^*}\left(\hat{\mu}_k(\Tau), \mu_k\right) = \psi_{\mu_k}^*\left(\hat{\mu}_k(\Tau)\right) = \psi^*\left(\hat{\mu}_k(\Tau) - \mu_k\right).
\end{align*}
Since $\psi^*$ is convex,  applying the Jensen's inequality to the risk bound in the equation \eqref{eq::bound_on_Bregman} of Theorem~\ref{thm::bound_on_Bregman}, we get that
\begin{align*}
	\psi^*\left( \mathbb{E}\left[  \hat{\mu}_k(\Tau) - \mu_k\right] \right) 
	&\leq  \mathbb{E}\left[  \psi^*\left( \hat{\mu}_k(\Tau) - \mu_k\right) \right] \\
	& = \mathbb{E} D_{\psi_{\mu_k}^*} (\hat{\mu}_k(\Tau), \mu_k)  \leq U_{k,b}.
\end{align*}
If the bias $\mathbb{E}\left[  \hat{\mu}_k(\Tau) - \mu_k\right]$ is positive, $\psi^*$ can be replaced with $\psi_+^*$, which implies that
\[
\psi_+^*\left( \mathbb{E}\left[  \hat{\mu}_k(\Tau) - \mu_k\right] \right)  \leq U_{k,b}.
\]
Since $\psi_+^*$ is an increasing and invertible function, we get the desired upper bound on bias, namely
\[
\mathbb{E}\left[\hat{\mu}_k(\Tau) - \mu_k\right]  \leq  {\psi_+^*}^{-1}\left( U_{k,b}\right).
\]
Applying the same argument to the case of a negative bias, we arrive at the analogous lower bound
\[
- {\psi_-^*}^{-1}\left( U_{k,b}\right) \leq \mathbb{E}\left[\hat{\mu}_k(\Tau) - \mu_k\right].
\]
This completes the proof of the expression \eqref{eq::bound_on_bias}. 

If $\psi^*$ is symmetric around zero,  $\psi^*(z) = \psi_+^*(|z|)$ for all  $z \in \Lambda^*$.	Therefore, by the same steps, 
\begin{align*}
	\psi_+^*\left( \mathbb{E}\left|  \hat{\mu}_k(\Tau) - \mu_k\right| \right) 
	&\leq  \mathbb{E}\left[  \psi_+^*\left( \left|\hat{\mu}_k(\Tau) - \mu_k\right|\right) \right] \\
	& = \mathbb{E}\left[  \psi^*\left( \hat{\mu}_k(\Tau) - \mu_k\right) \right] \\
	& = \mathbb{E} D_{\psi_{\mu_k}^*} (\hat{\mu}_k(\Tau), \mu_k)  \leq U_{k,b}.
\end{align*}
Applying ${\psi_+^*}^{-1}$ to the both sides, we arrive at the bound on the expected $\ell_1$ loss given in \eqref{eq::bound_on_l1_risk} which completes the proof.

\section{Proofs of Theorem~\ref{thm::bounds_in_fully_adap} and related statements}

In this section, Theorem~\ref{thm::bounds_in_fully_adap} and related statements are proved. Recall that we assume that there exists a time $t_0$ such that, almost surely, $\Tau \geq \tau \geq t_0$ and $N_k(t_0) \geq b \geq 3$  for all $ k \in [{\K}]$.
Let $h :[\log b,\infty) \rightarrow [1, \infty)$ be a non-decreasing function such that
$\sum_{j = 1}^{\infty} \frac{1}{h(\log b+j)} \leq 1$ and 
$v \in [b, \infty) \mapsto \frac{\log h\left(\log v \right)}{v}$ is non-increasing. 
It can be easily checked that the function $h_b(x) := x^2 /\log b$  satisfies the condition above.

For ease of readability, we drop the subscript $k$ throughout this section. We first provide the proof to the adaptive deviation inequality of Lemma~\ref{lemma::adaptive_deviation_ineq}, which is a fundamental component of the proof of Theorem~\ref{thm::bounds_in_fully_adap} and related statements.

\subsection{Proof of Lemma~\ref{lemma::adaptive_deviation_ineq}} \label{subSec::proof_of_adaptive_ineq}

The proof strategy of the adaptive deviation inequality \eqref{eq::adaptive_deviation_ineq}  is based on splitting the deviation event into simpler sub-events and then find exponential bounds for the probability of each sub-event. In detail, for each $t \geq 0$ and $j \in \log b + \mathbb{N} := \{\log b +i : i \in \mathbb{N}\}$, define the events
\begin{align*}
	F_t &:= \left\{  N(t) \geq b, N(t)\psi^*(\hat{\mu}_t - \mu)  > e\left(\delta + \log h\left( \log N(t)\right)\right)  \right\}, \\
	G_t &:= \left\{ \hat{\mu}(t) \geq \mu \right\}, \text{ and}\\
	H_t^j & := \left\{ e^{j-1} \leq N(t) < e^j \right\}.
\end{align*}
To bound the probability of these events, we rely on the following result, which we establish using arguments borrowed from the proof of Theorem 11 in \cite{garivier2011kl}, See also \cite{garivier2013informational}.
\begin{lemma} \label{lemma::1}
	Let $h :[\log b,\infty) \rightarrow [1, \infty)$ be a non-decreasing function which makes $v \mapsto \frac{\log h\left(\log v \right)}{v}$ non-increasing on $[b, \infty)$.   
	Then, for any fixed $\delta >0$, there exist a deterministic $\lambda_j \geq 0$ such that
	\begin{equation}
		\left\{ F_t \cap G_t \cap H_t^j \right\} \subset  \left\{\lambda_j \left[S(t) -\mu N(t)\right]- \psi(\lambda_j) N(t) \geq \delta + \log h(j)\right\},
	\end{equation}
	and  a deterministic $\lambda_j' < 0$ such that
	\begin{equation}
		\left\{ F_t \cap G_t^c \cap H_t^j \right\} \subset  \left\{\lambda_j^\prime  \left[S(t) -\mu N(t)\right]- \psi(\lambda_j^\prime ) N(t) \geq \delta + \log h(j)\right\}.
	\end{equation}
\end{lemma}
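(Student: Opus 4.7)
The plan is to adapt the dyadic-peeling argument from the proof of Lemma~\ref{lemma::1_l2} to the general sub-$\psi$ setting, replacing the explicit quadratic computation (where $\psi^*(z) = z^2/(2\sigma^2)$) with Legendre--Fenchel duality between $\psi$ and $\psi^*$. I will treat the positive-deviation case $G_t$ first; the case $G_t^c$ is symmetric.

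The first step is to extract a deterministic lower bound on $\psi^*(\hat{\mu}(t)-\mu)$ valid on $F_t \cap H_t^j$. On $H_t^j$ we have $N(t) < e^j$, and since both $v \mapsto \delta/v$ and $v \mapsto \log h(\log v)/v$ are non-increasing on $[b,\infty)$ (the latter by the standing hypothesis on $h$), so is their sum. Hence, on $F_t \cap H_t^j$,
\begin{equation*}
\psi^*(\hat{\mu}(t)-\mu) \;>\; e\cdot\frac{\delta + \log h(\log N(t))}{N(t)} \;\geq\; e\cdot\frac{\delta + \log h(j)}{e^{j}} \;=\; \frac{\delta + \log h(j)}{e^{j-1}} \;=:\; c_j.
\end{equation*}

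The second step converts this bound on $\psi^*$ into a linear bound. Because $\psi^*$ is non-negative, strictly convex, vanishes at $0$, and is strictly monotone on each side of $0$, by the intermediate value theorem there exist (assuming $c_j$ lies in the range of $\psi^*$, else $F_t\cap H_t^j$ is empty and there is nothing to prove) a unique $z_j>0$ and a unique $z_j'<0$ in $\Lambda^*$ with $\psi^*(z_j) = \psi^*(z_j') = c_j$. On $G_t$, combined with monotonicity of $\psi^*$ on $[0,\infty)\cap\Lambda^*$, this yields $\hat{\mu}(t)-\mu > z_j$. Pick $\lambda_j \geq 0$ attaining the sup in $\psi^*(z_j)=\sup_{\lambda\in\Lambda}[\lambda z_j-\psi(\lambda)]$; strict convexity of $\psi$ together with $\psi'(0)=0$ guarantees that $\lambda_j = (\psi')^{-1}(z_j)$ exists in $\Lambda$ and is strictly positive when $z_j>0$. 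Therefore
\begin{equation*}
\lambda_j[\hat{\mu}(t)-\mu] - \psi(\lambda_j) \;>\; \lambda_j z_j - \psi(\lambda_j) \;=\; \psi^*(z_j) \;=\; c_j,
\end{equation*}
and multiplying by $N(t) \geq e^{j-1}$ gives $\lambda_j[S(t)-\mu N(t)] - \psi(\lambda_j) N(t) \geq e^{j-1}c_j = \delta + \log h(j)$, as required. For the case $F_t \cap G_t^c \cap H_t^j$, mirror the construction: strict monotonicity of $\psi^*$ on $(-\infty,0]\cap\Lambda^*$ forces $\hat{\mu}(t)-\mu < z_j'$; choosing $\lambda_j' = (\psi')^{-1}(z_j') < 0$, the sign flip $\lambda_j'(\hat{\mu}(t)-\mu) > \lambda_j' z_j'$ yields the analogous linear bound.

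The main conceptual obstacle is simply the clean replacement of the quadratic sub-Gaussian computation by the convex-conjugate machinery; once the monotonicity of $v \mapsto (\delta+\log h(\log v))/v$ and the strict convexity of $\psi^*$ are in hand, the existence of $z_j$ and the choice of $\lambda_j$ follow essentially mechanically. The only technicality to confirm is that the sup defining $\psi^*(z_j)$ is attained in the interior of $\Lambda$ (so that $\lambda_j \in \Lambda$): this holds whenever $z_j$ is interior to $\Lambda^*$, and the remaining boundary case where $c_j$ exceeds $\sup \psi^*$ renders $F_t\cap H_t^j$ empty and can be ignored.
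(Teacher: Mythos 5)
Your proposal is correct and follows essentially the same route as the paper's proof: bound $\psi^*(\hat{\mu}(t)-\mu)$ from below by $c_j = (\delta+\log h(j))/e^{j-1}$ using the monotonicity of $v \mapsto \log h(\log v)/v$ on the dyadic block $H_t^j$, pick $z_j$ (resp.\ $z_j'$) with $\psi^*(z_j)=c_j$ via the one-sided monotonicity of $\psi^*$, and take $\lambda_j$ to be the conjugate maximizer so that $\lambda_j(\hat{\mu}(t)-\mu)-\psi(\lambda_j) \geq \psi^*(z_j)$, then scale by $N(t)\geq e^{j-1}$. Your added remarks on attainment of the supremum and on the vacuous case when $c_j$ exceeds the range of $\psi^*$ are slightly more careful than the paper but do not change the argument.
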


\begin{proof}[Proof of Lemma~\ref{lemma::1}]
	On the event $F_t \cap G_t \cap H_t^j$, since $b \leq e^{j-1} \leq N(t) < e^j$ and $v \mapsto\frac{\log h\left(\log v \right)}{v}$ is non-increasing on $[b, \infty)$, we have that 
	\begin{equation} \label{eq::control_psi_star}
		\psi^* \left(\hat{\mu}(t) -\mu\right) > \frac{e \delta}{N(t)} + \frac{e \log h\left( \log N(t)\right)}{N(t)} 
		\geq \frac{\delta}{e^{j-1}} +  \frac{ \log h(j)}{e^{j-1}}  > 0.
	\end{equation}
	
	Since, by assumption, $\psi$ is a non-negative  convex function such that $\psi(0) = \psi^\prime (0) = 0$. its convex conjugate $\psi^*$ is an increasing function on $[0, \infty)$ with $\psi^*(0) = 0$. Therefore, we can pick a deterministic real number $z_j \geq 0$ such that 
	\[
	\psi^*(z_j) = \frac{\delta}{e^{j-1}} +  \frac{ \log h(j)}{e^{j-1}} .
	\]
	Note that, since  $\hat{\mu}(t)  -\mu \geq 0$ and $\psi^*$ is an increasing function on $[0, \infty)$, our choice of $z_j$ along with the inequalities in \eqref{eq::control_psi_star}  implies that $\hat{\mu}(t)  -\mu \geq z_j$, on the event $F_t \cap G_t \cap H_t^j$.
	
	Let $\lambda_j$ be the convex conjugate of $z_j$ with respect to $\psi$,  which is given by  
	\[
	\lambda_j = \argmax\limits_{\lambda \in \Lambda} \lambda z_j - \psi(\lambda) = \psi^*(z_j).
	\]
	
	Since $z_j\geq 0$,  $\lambda_j$ is also non-negative. Therefore, on the event  $F_t \cap G_t \cap H_t^j$, we have that
	\begin{align*}
		\lambda_j \left[\hat{\mu}(t) -\mu\right] - \psi(\lambda_j) & \geq \lambda_j z_j - \psi(\lambda_j) \\
		& = \psi^{*} (z_j)\\
		& = \frac{\delta}{e^{j-1}}+ \frac{\log h(j)}{e^{j-1}} \\
		& \geq  \frac{\delta}{N(t)}+ \frac{\log h(j)}{N(t)} .
	\end{align*} 	
	Re-arranging, we get that
	\[
	\lambda_j \left[S(t) -\mu N(t)\right]- \psi(\lambda_j) N(t) \geq \delta + \log h(j).
	\]
	which proves the first statement in the lemma. For the second statement, since $\psi^*$ is a decreasing function on $(-\infty, 0]$ with $\psi^*(0) = 0$ we can pick a deterministic real number $z_j' < 0$ such that 
	\[
	\psi^*(z_j') = \frac{\delta}{e^{j-1}} +  \frac{ \log h(j)}{e^{j-1}} .
	\]
	Note that, since  $\hat{\mu}(t)  -\mu < 0$ and $\psi^*$ is an decreasing function on $(-\infty, 0]$, the choice of $z_j'$ and the inequalities in \eqref{eq::control_psi_star} yield that $\hat{\mu}(t)  -\mu < z_j$, on the event $F_t \cap G_t^c \cap H_t^j$. 	Let $\lambda_j^\prime$ be the convex conjugate of $z_j^\prime$. Since $z_j^\prime < 0$, it is also the case that $\lambda_j^\prime <0$. Then, by the same argument used for $\hat{\mu}(t) -\mu \geq 0$ case,  the second statement holds which completes the proof.
\end{proof}
Now, we continue to prove Lemma~\ref{lemma::adaptive_deviation_ineq}.	In Fact~\ref{fact::KL_equiv_psi_star}, we showed that $D_{\psi_{\mu}^*}(\hat{\mu}, \mu) = \psi^* (\hat{\mu} - \mu)$. 	Thus, the adaptive deviation inequality \eqref{eq::adaptive_deviation_ineq} can be re-stated in the general setting as the inequality 
\begin{equation}\label{eq::general_adap_deviation}
	\begin{aligned}
		\mathbb{P}\left(\frac{N(\tau)}{\log h\left(\log N(\tau)\right)}\psi^*\left(\hat{\mu}(\tau) -\mu\right) \geq C_{h,b} \delta\right) 
		\leq 2\exp\left\{-\delta\right\},~~\forall \delta \geq 1,
	\end{aligned}
\end{equation}
where $C_{h,b} := e\left(1 + \frac{1}{\log h\left(\log b\right)}\right)$, which can be derived from the following more general inequality.
\begin{equation}
	\begin{aligned}
		&\mathbb{P}\left(N(\tau)\psi^*\left(\hat{\mu}(\tau) -\mu\right) \geq e\left(\delta + \log h\left( \log N(\tau)\right)\right)\right) \\
		&\leq 2\exp\left\{-\delta\right\},~~\forall \delta \geq 0.
	\end{aligned}
\end{equation}
To show the above bound, it is sufficient to show that  the inequality holds uniformly for all time. (e.g., see Lemma 3 in \cite{howard2021time}). Therefore, in this proof, we prove the following uniform concentration inequality: 
\begin{equation}\label{eq::adap_deviation_simple_form}
	\begin{aligned}
		&\mathbb{P}\left(\exists t \in \mathbb{N} : N(t) \geq b, N(t)\psi^*\left(\hat{\mu}(t) -\mu\right) \geq e\left(\delta + \log h\left( \log N(t)\right)\right)\right)\\ 
		&\leq 2\exp\left\{-\delta\right\},~~\forall \delta \geq 0.
	\end{aligned}
\end{equation}				
The event on the left-hand side of \eqref{eq::adap_deviation_simple_form} is equal to $\bigcup_{t = 1}^\infty F_t$, and its probability can be bounded as follows:
\begin{align*}
	\mathbb{P}\left( \bigcup_{t = 1}^\infty F_t \right) &= \mathbb{P} \left( \bigcup_{t = 1}^\infty \bigcup_{j \in \log b + \mathbb{N}} \left[ H_t^{j} \cap F_t \cap \left( G_t \cup G_t^c \right) \right]\right) \\
	&= \mathbb{P} \left( \left[\bigcup_{t = 1}^\infty \bigcup_{j \in \log b + \mathbb{N}} \left( H_t^{j} \cap F_t \cap  G_t \right) \right] \cup \left[ \bigcup_{t = 1}^\infty \bigcup_{j \in \log b + \mathbb{N}} \left( H_t^{j} \cap F_t \cap  G_t^c \right)\right]\right) \\
	& \leq \mathbb{P} \left( \bigcup_{t = 1}^\infty \bigcup_{j \in \log b + \mathbb{N}} \left( H_t^{j} \cap F_t \cap  G_t \right) \right) + \mathbb{P} \left( \bigcup_{t = 1}^\infty \bigcup_{j \in \log b + \mathbb{N}} \left( H_t^{j} \cap F_t \cap  G_t^c \right) \right).
\end{align*}
For each $\lambda$ and $t$, define $L_t(\lambda) := \exp\left\{ \lambda\left( S(t)- \mu N(t) \right) - N(t)\psi(\lambda)  \right\}$. We show in the proof of Lemma~\ref{lemma::deviation_ineq},  that, for each $\lambda \in \Lambda$, $\{L_t(\lambda)\}_{t \geq 0}$ is a non-negative super-martingale with $\mathbb{E}L_0(\lambda) = 1$. By Lemma~\ref{lemma::1}, we have that
\begin{align*}
	&\mathbb{P} \left( \bigcup_{t = 1}^\infty \bigcup_{j =\log b+\mathbb{N}}^\infty \left( H_t^{j} \cap F_t \cap  G_t \right) \right)  \\
	&\leq \mathbb{P}\left(\exists t \geq 0, \exists j  \in \log b + \mathbb{N} :  \lambda_j \left[S(t)-\mu N(t)\right] - \psi(\lambda_j) N(t) > \delta + \log h(j)  \right) ~~\text{(Lemma~\ref{lemma::1})}\\
	& = \mathbb{P}\left(\exists t \geq 0, \exists j  \in \log b + \mathbb{N} :  \exp\left(\lambda_j \left[S(t)-\mu N(t)\right] - \psi(\lambda_j) N(t) \right)> h(j) e^{\delta}   \right) \\
	& = \mathbb{P}\left(\exists t \geq 0, \exists j  \in \log b + \mathbb{N} :  L_t(\lambda_j)> h(j) e^{\delta}   \right) \\
	& \leq \sum_{j  \in \log b + \mathbb{N}} \mathbb{P}\left( \sup_{t \geq 0} L_t(\lambda_j)> h(j) e^{\delta}   \right) ,
\end{align*}
where the last inequality stems from the union bound. 

Since $\{L_t\}_{t \geq 0}$ is a non-negative super-martingale
with $L_0 =1$, by applying  Ville's maximal inequality \cite{villeetude}, we conclude that
\begin{align*}
	\sum_{j  \in \log b + \mathbb{N}} \mathbb{P}\left( \sup_{t \geq 0} L_t(\lambda_j)> h(j) e^{\delta}   \right)  
	\leq e^{-\delta}  \sum_{j=1}^\infty \frac{1}{h(\log b+j)} 
	\leq e^{-\delta}.
\end{align*}
Similarly, it can be shown
\[
\mathbb{P} \left( \bigcup_{t = 1}^\infty \bigcup_{j \in \log b + \mathbb{N}} \left( H_t^{j} \cap F_t \cap  G_t^c \right) \right) 
\leq e^{-\delta}.
\]
By combining  two bounds, we get that
\begin{align*}
	\mathbb{P}\left(\exists t \geq 0 : N(t) \geq b, N(t)\psi^*(\hat{\mu}_t - \mu)  > e\left(\delta + \log h\left( \log N(t)\right)\right)  \right)
	\leq 2  e^{-\delta},
\end{align*}	
which implies the desired bound on the adaptive deviation probability in \eqref{eq::general_adap_deviation}.

\subsection{Proof of Theorem~\ref{thm::bounds_in_fully_adap}} \label{subSec::Proof_of_second_them}
The proof of Theorem~\ref{thm::bounds_in_fully_adap} relies on the Donsker-Varadhan representation of the KL divergence and arguments in \cite{russo2016controlling, jiao2017dependence}. For completeness, we cite the following form of Donsker-Varadhan representation theorem (see, e.g.,\cite{donsker1983asymptotic, jiao2017dependence}): 
\begin{lemma}
	Let $P$, $Q$ be probability measures on $\mathcal{X}$ and let $\mathcal{C}$ denote the set of functions $f : \mathcal{X} \mapsto \mathbb{R}$ such that $\mathbb{E}_Q \left[e^{f(X)}\right] < \infty$. If $D_{KL}(P || Q) < \infty$ then for every $f \in \mathcal{C}$ the expectation $\mathbb{E}_P \left[f(X)\right]$ exists and furthermore 
	\begin{equation}
		D_{KL} (P || Q)  = \sup_{f \in \mathcal{C}} \mathbb{E}_P \left[f(X)\right] - \log \mathbb{E}_Q \left[e^{f(X)}\right] ,
	\end{equation}
	where the supremum is attained when $f =\log \frac{\mathrm{d}P}{\mathrm{d} Q}$.
\end{lemma}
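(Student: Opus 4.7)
The plan is to prove the Donsker-Varadhan representation in three pieces: (i) well-definedness of $\mathbb{E}_P[f]$ for every $f\in\mathcal{C}$; (ii) the variational inequality $\mathbb{E}_P[f] - \log \mathbb{E}_Q[e^{f(X)}] \leq D_{KL}(P\|Q)$ for all $f\in\mathcal{C}$; and (iii) attainment of equality at $f^{\star} := \log \tfrac{\mathrm{d}P}{\mathrm{d}Q}$. The starting observation is that $D_{KL}(P\|Q) < \infty$ forces $P \ll Q$, so that the Radon-Nikodym derivative $\tfrac{\mathrm{d}P}{\mathrm{d}Q}$ exists and is positive $P$-almost surely; this will be used throughout the argument.

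For (i), I would invoke the Fenchel-Young inequality $ab \leq e^a + b\log b - b$, valid for $a \in \mathbb{R}$ and $b \geq 0$ with the convention $0\log 0 = 0$, pointwise with $a = f^+(x)$ and $b = \tfrac{\mathrm{d}P}{\mathrm{d}Q}(x)$. Integrating against $Q$ yields $\mathbb{E}_P[f^+] \leq \mathbb{E}_Q[e^{f^+}] + D_{KL}(P\|Q) - 1$. Since $e^{f^+} \leq e^{f} + 1$ pointwise (splitting on $\{f\geq 0\}$ and $\{f<0\}$), the right side is finite, so $\mathbb{E}_P[f^+] < \infty$ and $\mathbb{E}_P[f]$ is well-defined as an element of $[-\infty, \infty)$.

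For (ii), I would introduce the tilted probability measure $\tilde{Q}$ with density $\tfrac{\mathrm{d}\tilde{Q}}{\mathrm{d}Q} := e^{f(X)}/\mathbb{E}_Q[e^{f(X)}]$, which is a bona fide probability measure precisely because $f \in \mathcal{C}$, and satisfies $\tilde{Q} \sim Q$ since $e^{f} > 0$; hence $P \ll \tilde{Q}$. The chain rule for Radon-Nikodym derivatives gives $\log\tfrac{\mathrm{d}P}{\mathrm{d}\tilde{Q}} = \log\tfrac{\mathrm{d}P}{\mathrm{d}Q} - f + \log \mathbb{E}_Q[e^{f(X)}]$ and, after integrating against $P$, produces the identity $D_{KL}(P\|\tilde{Q}) = D_{KL}(P\|Q) - \mathbb{E}_P[f] + \log \mathbb{E}_Q[e^{f(X)}]$. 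Gibbs' inequality $D_{KL}(P\|\tilde{Q}) \geq 0$, itself a consequence of Jensen's inequality applied to the convex function $-\log$, then yields the variational upper bound uniformly over $\mathcal{C}$.

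For (iii), I would substitute $f^{\star} = \log\tfrac{\mathrm{d}P}{\mathrm{d}Q}$: then $\mathbb{E}_Q[e^{f^{\star}}] = \mathbb{E}_Q[\tfrac{\mathrm{d}P}{\mathrm{d}Q}] = 1$, so $f^{\star}\in\mathcal{C}$ and $\log\mathbb{E}_Q[e^{f^{\star}}] = 0$, while $\mathbb{E}_P[f^{\star}] = D_{KL}(P\|Q)$ by definition, saturating the bound from (ii). The main subtlety to navigate is the possibility that $\mathbb{E}_P[f] = -\infty$, in which case the variational inequality holds trivially but the identity for $D_{KL}(P\|\tilde{Q})$ must be interpreted with extended arithmetic; this is handled by a truncation argument (replace $f$ by $f \vee (-n)$, apply the identity, and send $n \to \infty$ using monotone convergence on $\mathbb{E}_P[(f \vee (-n))^-]$ together with the already-established finiteness of $\mathbb{E}_P[f^+]$).
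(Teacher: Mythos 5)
Your proof is correct, but note that the paper does not actually prove this lemma: it is stated as a cited fact (attributed to Donsker--Varadhan and to Jiao et al.) and used as a black box in the proof of Theorem~\ref{thm::bounds_in_fully_adap}, so there is no in-paper argument to compare against. Your argument is the standard Gibbs-tilting proof and all three pieces check out: the Fenchel--Young inequality $ab \leq e^a + b\log b - b$ applied with $a = f^+$ and $b = \mathrm{d}P/\mathrm{d}Q$ correctly yields $\mathbb{E}_P[f^+] \leq \mathbb{E}_Q[e^{f^+}] + D_{KL}(P\|Q) - 1 < \infty$, so $\mathbb{E}_P[f]$ is well-defined in $[-\infty,\infty)$; the chain-rule identity $D_{KL}(P\|\tilde{Q}) = D_{KL}(P\|Q) - \mathbb{E}_P[f] + \log\mathbb{E}_Q[e^{f}]$ together with nonnegativity of $D_{KL}(P\|\tilde{Q})$ gives the upper bound; and $f^\star = \log\frac{\mathrm{d}P}{\mathrm{d}Q}$ saturates it since $\mathbb{E}_Q[e^{f^\star}] = 1$. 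You are also right to flag the $\mathbb{E}_P[f] = -\infty$ edge case, and your truncation fix (using finiteness of $\mathbb{E}_P[f^+]$ and monotone convergence on the negative part) handles it; alternatively one can simply observe that the inequality is vacuous in that case. The only cosmetic caveat, shared with the lemma statement itself, is that $f^\star$ may equal $-\infty$ on a $Q$-positive, $P$-null set, so strictly speaking attainment holds after modifying $f^\star$ on that set or as a limit along $\log(\frac{\mathrm{d}P}{\mathrm{d}Q} \vee \epsilon)$; this does not affect the validity of your argument.
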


To apply Donsker-Varadhan representation, we need the following bound on the expectation of the exponentiated stopped adaptive process, which is based on the adaptive deviation inequality in Lemma~\ref{lemma::adaptive_deviation_ineq}.
\begin{claim} \label{claim::martingale_adap_ineq}
	Under the assumptions of Theorem~\ref{thm::bound_on_Bregman}, 	for each $k \in[{\K}]$ we have that
	\begin{equation} \label{eq::martingale_adap_ineq}
		\mathbb{E} \exp\left\{\frac{1}{2C_{h,b}} \left[\frac{N_{k}(\tau)}{\log h\left(\log N_k(\tau)\right)}  D_{\psi_{\mu_k}^*}(\hat{\mu}_k, \mu_k)  \right]\right\} \leq 3.46.
	\end{equation}
\end{claim}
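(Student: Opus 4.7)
The plan is to convert the high-probability deviation bound in Lemma~\ref{lemma::adaptive_deviation_ineq} (specifically the general version~\eqref{eq::general_adap_deviation} involving $h$) into an exponential moment bound via a layer-cake integration in logarithmic coordinates. Let
\[
Y := \frac{1}{2C_{h,b}} \cdot \frac{N_k(\tau)}{\log h(\log N_k(\tau))} \, D_{\psi^*_{\mu_k}}\bigl(\hat{\mu}_k(\tau), \mu_k\bigr),
\]
so that the target inequality~\eqref{eq::martingale_adap_ineq} becomes $\mathbb{E}\bigl[e^Y\bigr] \leq 3.46$.

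First, I would invoke the identity $\mathbb{E}[e^Y] = \int_0^\infty \mathbb{P}(e^Y > s)\,ds$ and change variables via $s = e^\delta$ to obtain $\mathbb{E}[e^Y] = \int_{-\infty}^{\infty} \mathbb{P}(Y > \delta)\, e^\delta\, d\delta$. Splitting the integral at $\delta = 1$ and using the trivial bound $\mathbb{P}(Y > \delta) \leq 1$ on $(-\infty, 1]$ yields
\[
\mathbb{E}\bigl[e^Y\bigr] \;\leq\; e \,+\, \int_1^\infty \mathbb{P}(Y > \delta)\, e^\delta\, d\delta.
\]

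Next, I would apply the deviation inequality \eqref{eq::general_adap_deviation} (together with the identity $D_{\psi^*_{\mu_k}}(\hat{\mu}_k, \mu_k) = \psi^*(\hat{\mu}_k - \mu_k)$ from Fact~\ref{fact::KL_equiv_psi_star}), which states that for every $\delta' \geq 1$,
\[
\mathbb{P}\!\left( \frac{N_k(\tau)}{\log h(\log N_k(\tau))}\, D_{\psi^*_{\mu_k}}(\hat{\mu}_k, \mu_k) \;\geq\; C_{h,b}\, \delta' \right) \;\leq\; 2 e^{-\delta'}.
\]
Substituting $\delta' = 2\delta$---valid for all $\delta \geq 1/2$, hence in particular on the integration domain $[1,\infty)$---gives $\mathbb{P}(Y > \delta) \leq 2 e^{-2\delta}$ for $\delta \geq 1$.

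Plugging this bound into the preceding display leaves an elementary integral,
\[
\int_1^\infty 2 e^{-2\delta}\cdot e^{\delta}\, d\delta \;=\; 2 \int_1^\infty e^{-\delta}\, d\delta \;=\; \frac{2}{e},
\]
and we conclude that $\mathbb{E}[e^Y] \leq e + 2/e \approx 3.454 \leq 3.46$, as required. There is no real obstacle here, since Lemma~\ref{lemma::adaptive_deviation_ineq} already supplies the sub-exponential tail of $Y$ (up to a scale factor); the only delicate design choice is the prefactor $1/(2C_{h,b})$ in the definition of $Y$, which is calibrated precisely so that the deviation decay rate $2\delta$ strictly dominates the growth $\delta$ contributed by the $e^\delta$ factor from the change of variables---keeping the integral finite and the final constant tight.
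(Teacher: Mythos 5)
Your proof is correct and follows essentially the same route as the paper: both convert the deviation inequality~\eqref{eq::general_adap_deviation} into the exponential moment bound via the layer-cake identity $\mathbb{E}[e^Y]=\int_{-\infty}^{\infty}\mathbb{P}(Y>\delta)e^{\delta}\,\mathrm{d}\delta$, truncate the lower tail at $\delta=1$ to contribute $e$, and use the factor $1/(2C_{h,b})$ so that the tail decays like $2e^{-2\delta}$, leaving $\int_1^\infty 2e^{-\delta}\,\mathrm{d}\delta = 2/e$ and the bound $e+2/e<3.46$. Your explicit check that $\delta'=2\delta\geq 1$ on the integration domain is a welcome detail the paper leaves implicit.
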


\begin{proof}[Proof of Claim~\ref{claim::martingale_adap_ineq}]
	Using the inequality
	\begin{equation} \label{eq::Fubini_exp} 
		\begin{aligned}
			\mathbb{E}e^X =\mathbb{E}\int_{-\infty}^{\infty}\mathbbm{1}( \delta < X)e^\delta\mathrm{d}\delta  &= \int_{-\infty}^{\infty}\mathbb{P}(X > \delta) e^\delta\mathrm{d}\delta \\
			&\leq \int_{1}^{\infty}\mathbb{P}(X > \delta) e^\delta\mathrm{d}\delta  + \int_{-\infty}^{1} e^\delta\mathrm{d}\delta \\
			&= \int_{1}^{\infty}\mathbb{P}(X > \delta) e^\delta\mathrm{d}\delta  + e,
		\end{aligned}
	\end{equation}
	we can bound the left hand side of \eqref{eq::martingale_adap_ineq} as follows:  
	\begin{align*}
		&\mathbb{E} \exp\left\{ \frac{1}{2C_{h,b}} \left[\frac{N_{k}(\tau)}{\log h\left(\log N_k(\tau)\right)}  D_{\psi_{\mu_k}^*}(\hat{\mu}_k, \mu_k)  \right]\right\} \\
		& \leq e+\int_{1}^{\infty} \mathbb{P}\left( \frac{1}{2C_{h,b}} \left[\frac{N_{k}(\tau)}{\log h\left(\log N_k(\tau)\right)}  D_{\psi_{\mu_k}^*}(\hat{\mu}_k, \mu_k)  \right] > \delta \right) e^\delta \mathrm{d} \delta \\
		& =  e + \int_{1}^{\infty} \mathbb{P}\left( \frac{N_{k}(\tau)}{\log h\left(\log N_k(\tau)\right)}  D_{\psi_{\mu_k}^*}(\hat{\mu}_k, \mu_k) >  2C_{h,b} \delta\right) e^\delta \mathrm{d} \delta.
	\end{align*}
	By applying the adaptive deviation inequality \eqref{eq::adaptive_deviation_ineq}  Lemma~\ref{lemma::adaptive_deviation_ineq}, the last term can be further bounded by $ e + \int_{1}^{\infty} 2 e^{-\delta} \mathrm{d} \delta = e + 2/e < 3.46$. The claimed result readily follows.
\end{proof}

Coming back to the proof of Theorem~\ref{thm::bounds_in_fully_adap},
for each $k \in [{\K}]$, set $   P_k = \mathcal{L}\left(\D_{\Tau} | {\kappa} = k\right)$, $Q = \mathcal{L}\left(\D_{\Tau}\right)$ and 
\[
f_k = \frac{1}{2C_{h,b}} \left[\frac{N_{k}(\tau)}{\log h\left(\log N_k(\tau)\right)}  D_{\psi_{\mu_k}^*}(\hat{\mu}_k, \mu_k)  \right].
\]
Then, from the Donsker-Varadhan representation, we can lower bound the mutual information between the adaptive query $\kappa$ and the data $\D_{\Tau}$ in the following way:
\begin{align*}
	I(\kappa ; \D_{\Tau}) &= \sum_{k =  1}^{\K} \mathbb{P}(\kappa = k)  D_{KL}\left(\mathcal{L}\left(\D_{\Tau} | {\kappa} = k\right) ||\mathcal{L}\left(\D_{\Tau}\right) \right) \\
	& \geq  \sum_{k =  1}^{\K} \mathbb{P}(\kappa = k)  \mathbb{E}_{P_k} \left[f_k\right] - \log \mathbb{E}_{Q} \left[e^{f_k}\right]   \\	
	& = \sum_{k = 1}^{\K} \mathbb{P}(\kappa = k) \left\{ \frac{1}{2C_{h,b}} \mathbb{E} \left[\frac{N_{k}(\tau)}{\log h\left(\log N_k(\tau)\right)}  D_{\psi_{\mu_k}^*}(\hat{\mu}_k, \mu_k)    \mid \kappa = k \right] \right.\\
	&~~~~~~\left. - \log \mathbb{E}\left[ \exp\left\{\frac{1}{2C_{h,b}} \left[\frac{N_{k}(\tau)}{\log h\left(\log N_k(\tau)\right)}  D_{\psi_{\mu_k}^*}(\hat{\mu}_k, \mu_k)  \right]\right\}\right]   \right\} \\
	& \geq \sum_{k = 1}^{\K} \mathbb{P}(\kappa = k)  \left\{\frac{1}{2C_{h,b}}\mathbb{E}  \left[\frac{N_{k}(\tau)}{\log h\left(\log N_k(\tau)\right)}  D_{\psi_{\mu_k}^*}(\hat{\mu}_k, \mu_k)   \mid \kappa = k \right] -\log 3.46 \right\}\\
	& \geq  \frac{1}{2C_{h,b}}\mathbb{E}  \left[\frac{N_{\kappa}(\tau)}{\log h\left(\log N_\kappa(\tau)\right)}  D_{\psi_{\mu_\kappa}^*}(\hat{\mu}_\kappa, \mu_\kappa)   \right]-1.25,
\end{align*}
where  the second inequality is due to the inequality~\eqref{eq::martingale_adap_ineq} in Claim~\ref{claim::martingale_adap_ineq}. The  risk bound \eqref{eq::adap_risk} now follows from rearranging with $h(x) = x^2 / \log b$ and $C_b := 4C_{h,b}$ which completes the proof. 

The only remaining proof in this section is that of Corollary~\ref{cor::bounds_in_fully_adap}, which we present below. 
\subsection{Proof of Corollary~\ref{cor::bounds_in_fully_adap}} \label{subSec::proof_of_bound_in_fully_adap}
For any $p, q >1$ with $\frac{1}{p} + \frac{1}{q} = 1$,  H\"older's inequality along with the bound on the adaptive risk in \eqref{eq::adap_risk}  implies that
\begin{align*}
	\left[\mathbb{E} D_{\psi_{\mu_\kappa}^*}^{1/p}(\hat{\mu}_\kappa, \mu_\kappa) \right]^p 
	&= \left[\mathbb{E} \left(\frac{N_{\kappa}(\tau)}{\log \log N_{\kappa}(\tau)}\right)^{-1/p} \left(\frac{N_{\kappa}(\tau)}{\log \log N_{\kappa}(\tau)}\right)^{1/p} D_{\psi_{\mu_\kappa}^*}^{1/p}(\hat{\mu}_\kappa, \mu_\kappa) \right]^p\\
	&\leq \left[\mathbb{E} \left(\frac{N_{\kappa}(\tau)}{\log \log N_{\kappa}(\tau)}\right)^{-q/p} \right]^{p/q}\mathbb{E}\left[  \frac{N_{\kappa}(\tau)}{\log \log N_{\kappa}(\tau)} D_{\psi_{\mu_\kappa}^*}(\hat{\mu}_\kappa, \mu_\kappa) \right] \\  
	& \leq \left[\mathbb{E} \left(\frac{N_{\kappa}(\tau)}{\log \log N_{\kappa}(\tau)}\right)^{-q/p} \right]^{p/q}C_b \left[ I(\kappa ; \D_{\Tau}) + 1.25 \right]\\
	& = \frac{C_b}{\dbtilde{n}^{\eff,q/p}} \left[ I(\kappa ; \D_{\Tau}) + 1.25 \right].
\end{align*}
By setting $r := 1/p$, we proves the  inequality  \eqref{eq::r-norm_bound}, as desired.

\section{Proofs of propositions and facts}\label{sec::proofs_of_props}
In this section, we provide formal proofs of propositions and facts which are omitted in the main text.

\subsection{Proof of Proposition~\ref{prop::consistency}} \label{Appen::prop::consistency}

We first formally describe Theorem 2.1 and 2.2 in \cite{gut2009stopped} which provide sufficient conditions for the consistency of the sample mean.
\begin{fact}[Theorem 2.1 and 2.2 in \cite{gut2009stopped}] \label{fact::thms_gut2009}
Suppose that 
\begin{equation}
    Y_n  \overset{a.s.}{\to} Y~~\text{as}~~n \to \infty~~\text{and}~~N(t) \overset{a.s.}{\to} \infty~~\text{as}~~t \to \infty.
\end{equation}
Then
\begin{equation}
      Y_{N(t)}  \overset{a.s.}{\to} Y~~\text{as}~~t \to \infty.
\end{equation}
This statement also holds if the almost sure convergence is replaced with the convergence in probability. 
\end{fact}
For the completeness of the presentation, we provide the fact as follows:
\begin{proof}[Proof of Theorem 2.1 and 2.2 in \cite{gut2009stopped}]
    Define events $E, F$ and $G$ as
    \begin{align}
        E &:= \left(Y_n \to Y~~\text{as}~~n \to \infty \right) \\
        F &:= \left(N(t) \to \infty~~\text{as}~~t \to \infty \right) \\
        G &:= \left(Y_{N(t)} \to Y~~\text{as}~~t \to \infty \right).
    \end{align}
Then, $E\cap F \subset G$ which implies $1 = \mathbb{P}(E \cap F) \leq \mathbb{P}(G)$. This proves the statement in \cref{fact::thms_gut2009} with the almost sure convergence. From the standard subsequence argument, it also proves the statement in \cref{fact::thms_gut2009} with the convergence in probability, as desired.
\end{proof}
Now, based on \cref{fact::thms_gut2009}, we prove Proposition~\ref{prop::consistency}.

For any $k \in [{\K}]$, the strong law of large numbers and Theorem 2.1. in \cite{gut2009stopped} implies that
\begin{equation} \label{eq::consistency_fixed_k}
	\begin{aligned}
		\text{if}~~N_k(\tau_t) \overset{a.s.}{\to} \infty~~\text{as}~~t \to \infty ~~ \text{then}~~ \hat{\mu}_k(\tau_t) \overset{a.s.}{\to} \mu_k~~\text{as}~~t \to \infty.
	\end{aligned}
\end{equation}
For each $k \in [{\K}]$, define events $E_k$ and $F_k$ such that
\begin{align}
	E_k &= \left( \hat{\mu}_k(\tau_t) \to \mu_k~~\text{as}~~t \to \infty  \right), \\
	F_k &= \left(N_k(\tau_t) \to \infty~~\text{as}~~t \to \infty\right).
\end{align}
The statement \eqref{eq::consistency_fixed_k} implies that $\mathbb{P}(E_k \cup F_k^c) = 1$. If not, suppose $\mathbb{P}(F_k) = 1$, then $0 < \mathbb{P}(E_k^c \cap F_k) =  \mathbb{P}(E_k^c)$ which contradicts to the statement $\mathbb{P}(E_k) =1$. Hence we also have $\mathbb{P}(D) = 1$ where $D := \bigcap_{k\in [{\K}]} E_k \cup F_k^c$.

Now, we prove that, for any random sequence $\left\{\kappa_{\tau_t} \in [{\K}]\right\}$, 
\begin{equation} \label{eq::consistency_chosen_k}
	\begin{aligned}
		\text{if}~~ N_{\kappa_{\tau_t}}(\tau_t) \overset{a.s.}{\to} \infty~~\text{as}~~t \to \infty  ~~ \text{then}~~\hat{\mu}_{\kappa_{\tau_t}}(\tau_t) - \mu_{\kappa_{\tau_t}}\overset{a.s.}{\to} 0~~\text{as}~~t \to \infty.
	\end{aligned}
\end{equation}
For notational simplicity, let $Y_k(t) := \hat{\mu}_{k}(\tau_t)$, $M_k(t) := N_k(\tau_k)$, and $C_t := \kappa_{\tau_t}$. 
First, define events $G$ and $H$ such that 
\begin{align}
	G &= \left( Y_{C_t}(t) \to \mu_{C_t}~~\text{as}~~t \to \infty  \right), \\
	H &= \left(M_{C_t}(t)) \to \infty~~\text{as}~~t \to \infty\right).
\end{align}
Note that
\begin{align*}
	& \left|Y_{C_t}(t) - \mu_{C_t}  \right| \to  0, \\
	& \Leftrightarrow \sum_{k=1}^{\K}\mathbbm{1}\left(C_t = k\right)\left|Y_k(t) - \mu_k  \right| \to  0, \\
	& \Leftrightarrow \forall k \in [{\K}],  \mathbbm{1}\left(C_t = k\right)\left|Y_k(t) - \mu_k  \right| \to 0,\\
	& \Leftrightarrow \forall k \in [{\K}],  \mathbbm{1}\left(C_t = k\right) \to 0 ~~\text{or}~~ \left|Y_k(t) - \mu_k  \right| \to 0.
\end{align*}
Hence, if $\left|Y_{C_t}(t) - \mu_{C_t}  \right| \not\to 0$, there exists $k \in [{\K}]$ such that
\[
\mathbbm{1}(C_t =k) \not\to 0 ~~\text{and}~~\left|Y_{k}(t) - \mu_{k}  \right| \not\to 0.
\]
Under the event $D$, it further implies that there exists $k \in [{\K}]$ such that 
\[
\mathbbm{1}(C_t =k) \not\to 0 ~~\text{and}~~M_{k}(t) \not\to \infty,
\]
which also implies $M_{C_t}(t) \not\to \infty$. Hence, we have $D \cap G^c \subset H^c$ which is equivalent to $H \subset D^c \cup G$. Since $\mathbb{P}(D) = 1$,  if $\mathbb{P}(H) = 1$ we have
\[
1 = \mathbb{P}(H) \leq \mathbb{P}(D^c \cup G) \leq \mathbb{P}(D^c) + \mathbb{P}(G) = \mathbb{P}(G),
\]
which proves the claimed statement~\eqref{eq::consistency_chosen_k}. From the standard subsequence argument, it also implies that
\begin{equation} \label{eq::consistency_chosen_k_in_prob}
	\begin{aligned}
		\text{if}~~  N_{\kappa_{\tau_t}}(\tau_t) \overset{p}{\to} \infty~~\text{as}~~t \to \infty ~~ \text{if}~~\hat{\mu}_{\kappa_{\tau_t}}(\tau_t) - \mu_{\kappa_{\tau_t}}\overset{p}{\to} 0~~\text{as}~~t \to \infty,
	\end{aligned}
\end{equation}
as desired.

\subsection{Proof of Proposition~\ref{prop::minimax_nonadaptive}}
\label{Appen::subSec::minimax_nonadap}

For any fixed nonadaptive sampling scheme $\nu \in \mathbb{V}$, stopping time $T \in \mathbb{T}$ satisfying $N_k(T) \geq 1$, and for Gaussian arms with mean $\mu_1, \dots, \mu_{\K}$ and variance $\sigma^2$, the likelihood function of given data $\D_T = \left\{A_1, Y_1, \dots, A_T, Y_T \right\}$ with respect to $\mu:=(\mu_1, \dots, \mu_{\K})$ is proportional to the following expression:
\begin{align*}
	P(\D_T | \mu) &\propto\prod_{t=1}^T \nu_t\left(A_t | \D_{t-1}\right) p_{A_t}\left(Y_t |\mu_{A_t} \right)  \\
	& \propto \left[\prod_{t=1}^T \nu_t\left(A_t | \D_{t-1}\right)\right] \exp\left\{-\frac{1}{2\sigma^2}\sum_{t=1}^T \left(Y_t - \mu_{A_t} \right)^2 \right\}\\
	& = \left[\prod_{t=1}^T \nu_t\left(A_t | \D_{t-1}\right)\right] \exp\left\{-\frac{1}{2\sigma^2} \sum_{t=1}^T \sum_{k=1}^{\K} \mathbbm{1}\left(A_t = k\right)\left(Y_t - \mu_{k} \right)^2 \right\} 
\end{align*}

Now, put an independent Gaussian prior with precision $\rho >0$ to each $\mu_k$, that is, $\pi(\mu_k) \propto \exp\left\{-\frac{\rho}{2}\mu_k^2  \right\},~~\forall k \in [{\K}].$ Then, the posterior distribution of $\mu$ can be expressed as follows:
\begin{align*}
	\pi\left(\mu |\D_T\right)
	& \propto \left[\prod_{t=1}^T \nu_t\left(A_t | \D_{t-1}\right)\right] \exp\left\{-\frac{1}{2\sigma^2} \sum_{t=1}^T \sum_{k=1}^{\K} \mathbbm{1}\left(A_t = k\right)\left(Y_t - \mu_{k} \right)^2  \right\} \exp\left\{ -\frac{\rho}{2}\sum_{k=1}^{\K} \mu_k^2 \right\}\\
	& \propto \prod_{k=1}^{\K} \exp\left\{-\frac{1}{2\sigma^2} \sum_{t=1}^T \mathbbm{1}\left(A_t = k\right)\left(\mu_{k}^2 - 2Y_t \mu_k\right) -\frac{\rho}{2}\mu_k^2\right\}\\
	&= \prod_{k=1}^{\K}\exp\left\{-\frac{1}{2\sigma^2}\left[\left(N_k(T) + \rho\sigma^2\right)\mu_k^2 - 2N_k(T)\overline{Y}_k(T)\mu_{k} \right]  \right\}\\
	&\propto \prod_{k=1}^{\K}\exp\left\{-\frac{N_k(T) + \rho\sigma^2}{2\sigma^2}\left(\mu_k - \frac{N_k(T)}{N_k(T) + \rho/\sigma^2}\overline{Y}_k(T) \right)^2  \right\},
\end{align*}
where $\overline{Y}_k(T)$ is the sample average of observations from $k$-th arm. Therefore, the posterior distribution of $(\mu_1, \dots, \mu_k)$ is coordinate-wisely independent given data and, for each arm, the posterior distribution is given as 
\[
\mu_k | D_T \sim N\left(\frac{N_k(T)}{N_k(T) + \rho/\sigma^2}\overline{Y}_k(T), \frac{\sigma^2}{N_k(T) + \rho\sigma^2}\right).
\]

Hence, for each $k$, $\hat{\mu}_k^B := \frac{N_k(T)}{N_k(T) + \rho/\sigma^2}\overline{Y}_k(T)$ is the Bayes estimator for $\ell_2$ loss under the Gaussian prior with precision $\rho$. Denote $P^N | \mu$ be the distribution of the data $\D_T$ under Gaussian arms with  mean $\mu = (\mu_1, \dots, \mu_{\K})$ and variance $\sigma^2$, and let $P^{N,\pi}$ be the posterior predictive distribution under a prior $\pi$ on $\mu$. Then, we have the following lower bound on the Bayes risk.
\begin{align*}
	\inf_{\tilde{\mu}_k}\mathbb{E}_{\mu\sim \pi} \mathbb{E}_{\D_T \sim P^N|\mu} N_k(T)\left(\tilde{\mu}_k -\mu_k \right)^2 
	& = \inf_{\tilde{\mu}_k}\mathbb{E}_{\D_T \sim P^{N, \pi}}  N_k(T) \mathbb{E}_{\mu\sim \pi(\cdot|\D_T)}\left(\tilde{\mu}_k -\mu_k \right)^2 \\
	& \geq \mathbb{E}_{\D_T \sim P^{N, \pi}}N_k(T) \inf_{\tilde{\mu}_k} \mathbb{E}_{\mu\sim \pi(\cdot|\D_T)} \left(\tilde{\mu}_k -\mu_k \right)^2 \\
	& =  \mathbb{E}_{\D_T \sim P^{N, \pi}}  N_k(T)\mathbb{E}_{\mu\sim \pi(\cdot|\D_T)} \left(\tilde{\mu}_k^B -\mu_k \right)^2 \\
	& \geq  \mathbb{E}_{\D_T \sim P^{N, \pi}} \left[\frac{\sigma^2 N_k(T) }{ N_k(T) + \rho\sigma^2 }\right] \\
	& = \mathbb{E}_{\D_T \sim P^{N,\pi}} \left[\frac{\sigma^2 }{ 1 + \rho\sigma^2 / N_k(T)}\right],
\end{align*}
where the last equality comes from the assumption $N_k(T) \geq 1$. 

Based on the Bayes risk calculation above, we can find a lower bound on the minimax normalized $\ell_2$ risk for each $\rho >0$ as follows:
\begin{align*}
	\inf_{\tilde{\mu}_k}\sup_{\substack{\mu_k \in \mathbb{R} \\P_k \in \mathbb{P}_k(\mu_k,\sigma_k) \\ \nu \in \mathbb{V}, T \in \mathbb{T}}} \mathbb{E}_{Q}  N_k(T)\left(\tilde{\mu}_k -\mu_k \right)^2  
	&\geq   \inf_{\tilde{\mu}_k} \mathbb{E}_{\mu\sim \pi} \mathbb{E}_{\D_T \sim P^N|\mu}  N_k(T)\left(\tilde{\mu}_k -\mu_k \right)^2  \\
	& \geq \mathbb{E}_{\D_T \sim P^{N,\pi}} \left[\frac{\sigma^2 }{ 1 + \rho\sigma^2 / N_k(T)}\right].
\end{align*}
Since we assume sampling and stopping strategies are nonadaptive, the distribution of $N_k(T)$ does not depend on $\pi$. Therefore, by the monotone convergence theorem with $\rho \searrow 0$, we have the following lower bound on the minimax normalized $\ell_2$ risk.
\begin{equation}
	\inf_{\tilde{\mu}_k}\sup_{\substack{P_k \in \mathbb{P}_k(\mu_k,\sigma_k) \\ \nu \in \mathbb{V}, T \in \mathbb{T}}} \mathbb{E}_{Q}  N_k(T)\left(\tilde{\mu}_k -\mu_k \right)^2    \geq \sigma^2.
\end{equation}

From the nonadaptivity of data collecting procedure, it can be easily shown that, for any choice of $P_k \in \mathbb{P}_k(\mu_k, \sigma_k), \nu \in \mathbb{V}, T \in \mathbb{T}$ and the corresponding $Q = Q(P_k, \nu, T)$, we have
\[
\mathbb{E}_Q N_k(T)\left(\overline{Y}_k(T) - \mu_k\right)^2 = \sigma_k^2,
\]
where $\overline{Y}_k(T)$ is the sample mean at time $T$. This observation shows that the minimax risk is equal to $\sigma_k^2$ and the sample mean estimator achieves it as claimed.

\subsection{Proof of Proposition~\ref{prop::bounds_l2_finite_moment}}
\label{Appen::subSec::bounds_l2_finite_moment}
The proof of Proposition~\ref{prop::bounds_l2_finite_moment} relies on a lower bound of $D_{f_q}$ and arguments in \cite{jiao2017dependence} which is summarized in Lemma~\ref{lemma::phi_diver_lower_bound} in Section~\ref{subSec::Proof_of_second_them_finite_moment}.

To apply the lemma, we first prove the following bound on the expectation of the $p$-norm of the normalized $\ell_2$ loss.
\begin{claim} \label{claim::p_norm_bound_nonadap}
	Under the assumptions of Proposition~\ref{prop::bounds_l2_finite_moment},	for each $k \in[{\K}]$ and for any fixed $p > 1$ we have that
	\begin{equation} \label{eq::p_norm_bound_nonadap}
		\left\|N_k(T)\left(\hat{\mu}_k(T)-\mu_k \right)^{2}\right\|_p   \leq C_{p} \left(\sigma_k^{(2p)}\right)^2,
	\end{equation}
	where $C_{p}$ is a constant depending only on $p$.
\end{claim}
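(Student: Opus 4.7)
The approach is to condition on the sampling and stopping randomness and then apply the Marcinkiewicz--Zygmund inequality to the conditionally i.i.d.\ observations from arm $k$. Let $\mathcal{G} := \sigma(W, A_1, \ldots, A_T, T)$ denote the $\sigma$-field generated by the external randomness, the arm assignments and the stopping time. Since $\nu$ and $T$ are nonadaptive, they are independent of the rewards $\{Y_s\}$, and hence conditionally on $\mathcal{G}$ the observations from arm $k$ (those $Y_s$ with $A_s = k$) form $N_k(T)$ i.i.d.\ draws from $P_k$. In particular, both $N_k(T)$ and the running sum $S_k(T) - \mu_k N_k(T)$ become easy to analyze under this conditioning.

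Writing
\[
N_k(T)\bigl(\hat{\mu}_k(T) - \mu_k\bigr)^{2} = \frac{\bigl(S_k(T) - \mu_k N_k(T)\bigr)^{2}}{N_k(T)},
\]
the next step is to invoke the Marcinkiewicz--Zygmund inequality: for any $p \geq 1$ and i.i.d.\ centered random variables $X_1, \ldots, X_n$ with finite $2p$-th moment, there is a constant $B_p$ depending only on $p$ such that $\mathbb{E}\bigl|\sum_{i=1}^{n} X_i\bigr|^{2p} \leq B_p\, n^{p}\, \mathbb{E}|X_1|^{2p}$. Applying this conditionally on $\mathcal{G}$ with $X_i$ drawn from the centered law of $P_k$ and $n = N_k(T)$ (which is $\mathcal{G}$-measurable and at least $1$ by hypothesis) gives
\[
\mathbb{E}\!\left[\bigl(S_k(T) - \mu_k N_k(T)\bigr)^{2p} \,\big|\, \mathcal{G}\right] \;\leq\; B_p\, N_k(T)^{p}\, \bigl(\sigma_k^{(2p)}\bigr)^{2p}.
\]
Dividing by $N_k(T)^{p}$, the $N_k(T)$ factors cancel and the conditional $p$-th moment of $N_k(T)(\hat{\mu}_k(T)-\mu_k)^{2}$ is bounded by $B_p \bigl(\sigma_k^{(2p)}\bigr)^{2p}$, uniformly in the sampling pattern.

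Taking expectation over $\mathcal{G}$ and then the $p$-th root yields the claim with $C_p := B_p^{1/p}$. The main technical points to get right are (i) justifying the independence of $\{A_s, T\}$ from $\{Y_s\}$ solely from the nonadaptivity assumption, so that conditionally on $\mathcal{G}$ the rewards from arm $k$ really are i.i.d.\ from $P_k$, and (ii) checking that the constant $B_p$ in Marcinkiewicz--Zygmund depends only on $p$ (and not on $n$ or on the underlying distribution), which is the standard statement of that inequality. No new martingale or concentration machinery is needed at this stage; the work is essentially a conditional moment computation.
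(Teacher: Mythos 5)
Your proposal is correct and follows essentially the same route as the paper: condition on the (nonadaptive) arm assignments and stopping time, apply the Marcinkiewicz--Zygmund inequality to the conditionally i.i.d.\ centered rewards from arm $k$, and cancel the $N_k(T)^p$ factors. The only cosmetic difference is that you invoke the i.i.d.\ corollary of M--Z with the explicit $n^p$ factor, whereas the paper applies the general form $\mathbb{E}|\sum X_i|^{2p}\leq B_p\,\mathbb{E}[(\sum X_i^2)^p]$ and then Jensen's inequality separately; these are the same computation.
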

The proof of the claim is based on the Marcinkiewicz-Zygmund (M-Z) inequality. We cite the following form of the inequality for completeness. 

\begin{lemma}[ \cite{marcinkiewicz1937fonctions}]
	For any $p \geq 1$, if $X_1,\dots X_n$ are independent random variables with $\mathbb{E}[X_i] = 0$ and $\mathbb{E}|X_i
	|^p < \infty$ for all $i = 1, \dots, n$ then the following inequality holds.
	\begin{equation}
		\mathbb{E}\left[\left|\sum_{i=1}^n X_i\right|^p\right] \leq B_p \mathbb{E}\left[\left(\sum_{i=1}^n |X_i|^2\right)^{p/2}\right],
	\end{equation}
	where $B_p>0$ is a constant depending only on $p$.
\end{lemma}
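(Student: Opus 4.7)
The plan is to condition on the sampling and stopping history to reduce the claim to an i.i.d.\ moment bound and then invoke the Marcinkiewicz--Zygmund inequality. Since both $\nu$ and $T$ are nonadaptive, neither depends on the arm realizations $Y_1, Y_2, \ldots$; hence, letting $\mathcal{G} := \sigma(W, A_1, \ldots, A_T, T)$, the number of draws $N_k(T) = \sum_{t=1}^T \mathbf{1}(A_t = k)$ is $\mathcal{G}$-measurable, and the observations drawn from arm $k$ are, conditionally on $\mathcal{G}$, an i.i.d.\ collection $\{Z_1, \ldots, Z_{N_k(T)}\}$ with distribution $P_k$. Setting $X_i := Z_i - \mu_k$, we have $\mathbb{E}[X_i \mid \mathcal{G}] = 0$ and $\mathbb{E}[|X_i|^{2p} \mid \mathcal{G}] = (\sigma_k^{(2p)})^{2p}$, provided this last quantity is finite (an assumption of Proposition~\ref{prop::bounds_l2_finite_moment}).

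The first step would be to rewrite the target quantity in the form
\[
N_k(T)\bigl(\hat{\mu}_k(T) - \mu_k\bigr)^2 \;=\; \frac{1}{N_k(T)}\Bigl(\sum_{i=1}^{N_k(T)} X_i\Bigr)^{\!2},
\]
so that
\[
\mathbb{E}\Bigl[N_k(T)^p\bigl(\hat{\mu}_k(T) - \mu_k\bigr)^{2p} \,\Big|\, \mathcal{G}\Bigr] \;=\; \frac{1}{N_k(T)^p}\,\mathbb{E}\!\left[\,\Bigl|\sum_{i=1}^{N_k(T)} X_i\Bigr|^{2p}\,\Big|\,\mathcal{G}\right].
\]
Next, applying the M--Z inequality with exponent $2p$ to the conditionally i.i.d.\ centered variables $X_1, \ldots, X_{N_k(T)}$ gives
\[
\mathbb{E}\!\left[\,\Bigl|\sum_{i=1}^{N_k(T)} X_i\Bigr|^{2p}\,\Big|\,\mathcal{G}\right] \;\leq\; B_{2p}\,\mathbb{E}\!\left[\Bigl(\sum_{i=1}^{N_k(T)} X_i^{2}\Bigr)^{\!p}\,\Big|\,\mathcal{G}\right].
\]

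To finish, I would control the right-hand side by a routine convexity step: since $p \geq 1$, Jensen's inequality applied to the convex map $x \mapsto x^p$ yields, for any fixed $n$,
\[
\Bigl(\sum_{i=1}^n X_i^{2}\Bigr)^{\!p} \;=\; n^p\Bigl(\tfrac{1}{n}\sum_{i=1}^n X_i^{2}\Bigr)^{\!p} \;\leq\; n^{p-1}\sum_{i=1}^n |X_i|^{2p},
\]
so the conditional expectation on the right is bounded by $N_k(T)^p \,(\sigma_k^{(2p)})^{2p}$. Substituting back cancels the $1/N_k(T)^p$ prefactor and leaves $B_{2p}\,(\sigma_k^{(2p)})^{2p}$ almost surely on $\mathcal{G}$. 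Taking the outer expectation and then the $p$-th root gives the claim with $C_p = B_{2p}^{1/p}$.

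The only delicate point, and what I would call the main ``obstacle'' (though it is really a bookkeeping issue), is ensuring that the nonadaptivity of $\nu$ and $T$ really produces the required conditional i.i.d.\ structure; concretely, that $\mathcal{G}$ can be enlarged to include all of $\{A_t\}$ and $T$ without making $\{Z_i\}$ dependent. This follows because nonadaptivity means $\nu_t$ and $T$ are functions only of $W$ and $\{A_s\}_{s < t}$, so that $Y_t \mid \mathcal{G} \sim P_{A_t}$ independently across $t$, and conditioning on $\mathcal{G}$ only fixes the arm labels---not the observed rewards---thereby justifying the application of M--Z above.
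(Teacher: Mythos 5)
The statement you were asked to prove is the Marcinkiewicz--Zygmund inequality itself: for independent, centered $X_1,\dots,X_n$, $\mathbb{E}\bigl|\sum_i X_i\bigr|^p \leq B_p\,\mathbb{E}\bigl[(\sum_i |X_i|^2)^{p/2}\bigr]$. Your proposal does not prove this. Instead, it \emph{invokes} the M--Z inequality as a known tool and uses it (together with conditioning on the nonadaptive sampling history and a Jensen step) to establish a different result, namely the moment bound $\|N_k(T)(\hat{\mu}_k(T)-\mu_k)^2\|_p \leq C_p (\sigma_k^{(2p)})^2$ of Claim~\ref{claim::p_norm_bound_nonadap}. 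As an argument for the stated lemma this is circular: the inequality you are supposed to establish appears as a premise in your own derivation. (The paper itself offers no proof of this lemma either --- it is cited directly from \citet{marcinkiewicz1937fonctions} as a classical fact --- so there is no ``paper proof'' for your argument to match; but a blind proof attempt of this statement would need to actually derive the inequality.)

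A genuine proof would have to supply the classical argument: for example, symmetrize the sum by introducing an independent copy $(X_i')$ and Rademacher signs $(\varepsilon_i)$, reduce to bounding $\mathbb{E}\bigl|\sum_i \varepsilon_i X_i\bigr|^p$ conditionally on the $X_i$, and then apply the Khintchine inequality to obtain the quadratic-variation term $\bigl(\sum_i X_i^2\bigr)^{p/2}$; alternatively, one can appeal to the Burkholder--Davis--Gundy inequality for the martingale of partial sums. None of these ingredients appears in your write-up. What you have written is essentially the paper's own proof of Claim~\ref{claim::p_norm_bound_nonadap} (the conditioning on $\mathcal{G}$, the application of M--Z with exponent $2p$, and the Jensen bound $\bigl(\sum_{i=1}^n X_i^2\bigr)^p \leq n^{p-1}\sum_{i=1}^n |X_i|^{2p}$ all match the appendix argument closely), so the work is correct and useful --- it is just aimed at the wrong target.
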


\begin{proof}[Proof of Claim~\ref{claim::p_norm_bound_nonadap}]
	For simple notations, let $W_t := \mathbbm{1}\left(A_t = k\right)$ and $Z_t = Y_t - \mu_k$. Then from the M-Z inequality, we have 
	\begin{align*}
		&\mathbb{E}\left|N_k(T)\left(\hat{\mu}_k(T) - \mu_k\right)^2\right|^p \\
		& =\mathbb{E}\left| \frac{1}{\sqrt{N_k(T)}}\sum_{t=1}^T W_t Z_t\right|^{2p} \\
		& = \mathbb{E}\left[\mathbb{E}\left[\left| \frac{1}{\sqrt{N_k(T)}}\sum_{t=1}^T W_t Z_t\right|^{2p} \mid \{W_t\}_{t\geq1}\right]\right]\\
		& \leq B_p \mathbb{E}\left[\mathbb{E}\left[\left( \frac{1}{N_k(T)}\sum_{t=1}^T W_t |Z_t|^2\right)^{p} \mid \{W_t\}_{t\geq1}\right]\right]~~(\text{by M-Z inequality and $W_t^2 =W_t$})\\
		& \leq B_p \mathbb{E}\left[\mathbb{E}\left[\left( \frac{1}{N_k(T)}\sum_{t=1}^T W_t |Z_t|^{2p}\right) \mid \{W_t\}_{t\geq1}\right]\right]~~(\text{by Jensen's inequality with $N_k(T) = \sum_{t=1}^{T} W_t$}) \\
		& \leq B_p \left(\sigma^{(2p)}\right)^{2p},
	\end{align*}
	which implies the claimed inequality with $C_p := (B_p)^{1/p}$.
\end{proof}
Now, we have all building blocks to complete the proof of Proposition~\ref{prop::bounds_l2_finite_moment}. For each $k \in [{\K}]$, set $   P_k = \mathcal{L}\left(\D_{T} | {\kappa} = k\right)$, $Q = \mathcal{L}\left(\D_{T}\right)$ and 
\[
f_k = \lambda N_{k}(T) \left(\hat{\mu}_k(T)-\mu_k \right)^{2}.
\]
for a $\lambda >0$. 	Then, from Lemma~\ref{lemma::phi_diver_lower_bound}, we can lower bound $I_q\left(\kappa, \D_{T}\right)$ in the following way:
\begin{align*}
	\frac{1}{q}I_q\left(\kappa, \D_{T}\right) &= \sum_{k =  1}^{\K} \mathbb{P}(\kappa = k)  \left\{ \frac{1}{q}D_{f_q}\left(\mathcal{L}\left(\D_{T} | {\kappa} = k\right) ||\mathcal{L}\left(\D_{T}\right) \right)\right\} \\
	& \geq  \sum_{k =  1}^{\K} \mathbb{P}(\kappa = k) \left\{ \mathbb{E}_{P_k} \left[f_k\right] - \mathbb{E}_{Q} \left[f_k\right] - \mathbb{E}_Q\left[\frac{\left|f_k\right|^p}{p}\right]\right\}   \\	
	& = \sum_{k = 1}^{\K} \mathbb{P}(\kappa = k)  \left\{\lambda\mathbb{E}\left[N_{k}(T) \left(\hat{\mu}_k(T)-\mu_k \right)^{2} \mid \kappa = k\right] \right. \\
	&~~~~~~~~~~ \left.-\lambda\mathbb{E}\left[N_{k}(T)\left(\hat{\mu}_k(T)-\mu_k \right)^{2} \right] -  \frac{\lambda^p}{p}\mathbb{E}\left[N_{k}(T)\left(\hat{\mu}_k(T)-\mu_k \right)^{2} \right]^p\right\}\\
	& \geq \sum_{k = 1}^{\K} \mathbb{P}(\kappa = k)  \left\{\lambda\mathbb{E}\left[N_{k}(T)\left(\hat{\mu}_k(T)-\mu_k \right)^{2} \mid \kappa = k\right] -  \left(\lambda \sigma_k^2  + (\lambda C_{p}(\sigma_k^{(2p)})^2)^{p} / p\right)   \right\}\\
	& =  \lambda\mathbb{E}\left[N_{\kappa}(T)\left(\hat{\mu}_\kappa(T)-\mu_\kappa \right)^{2}\right] -  \left(\lambda \|\sigma_\kappa\|_2^2  + \frac{\lambda^p C_{p}^p}{p}\|\sigma_\kappa^{(2p)}\|_{2p}^{2p}\right).
\end{align*}
Since this inequality holds for any $\lambda >0$, we get
\begin{align*}
	\mathbb{E}\left[N_{\kappa}(T) \left(\hat{\mu}_\kappa(T)-\mu_\kappa \right)^{2}\right] &=  \|\sigma_\kappa\|_2^2 + \inf_{\lambda >0} \frac{1}{\lambda}\left\{ \frac{I_q\left(\kappa, \D_{T}\right)}{q}  +  \frac{\lambda^p C_{p}^p}{p}\|\sigma_\kappa^{(2p)}\|_{2p}^{2p}\right\} \\
	& =  \|\sigma_\kappa\|_2^2 + C_{p}\|\sigma_\kappa^{(2p)}\|_{2p}^2  I_q^{1/q}\left(\kappa, \D_{T}\right),
\end{align*}
which completes the proof. 

\subsection{Proof of Fact~\ref{fact::KL_equiv_psi_star}} \label{subSec::prop_KL_equiv_psi_star}
Let $\theta_1$ and $\theta_0$ be natural parameters corresponding to $\mu_1$ and $\mu_0$ such that $\mu_1 = B^\prime(\theta_1)$ and  $\mu_0 = B^\prime(\theta_0)$. From well-known fact about the KL divergence in  exponential family theory, 
\begin{align*}
	\ell_{KL}(\mu_1, \mu_0) &= D_{KL}\left(\theta_1 \| \theta_0 \right) \\
	&= B^\prime(\theta_1)\left(\theta_1 - \theta_0\right) - B(\theta_1) + B(\theta_0).
\end{align*}
Since $\mu = B^\prime (\theta)$ and $\psi_{\mu}(\lambda)  := \lambda \mu + \psi(\lambda ; \theta)  = B(\lambda + \theta) - B(\theta)$, its derivative with respect to $\lambda$ is equal to $\psi_{\mu}^\prime (\lambda) = B^\prime(\lambda+\theta)$. Thus,
\begin{align*}
	D_{KL}\left(\theta_1 \| \theta_0 \right) 
	&= B^\prime(\theta_1)\left(\theta_1 - \theta_0\right) - B(\theta_1) + B(\theta_0)\\
	& = \psi^{\prime}_{\mu_0}(\theta_1 - \theta_0) \left(\theta_1 - \theta_0\right) - \psi_{\mu_0}(\theta_1 - \theta_0)\\
	&=     \psi_{\mu_0}(0)- \psi_{\mu_0}(\theta_1 - \theta_0)-\psi^{\prime}_{\mu_0}(\theta_1 - \theta_0) \left(0 - (\theta_1 - \theta_0)\right)~~\text{ (since $\psi_{\mu_0}(0) = 0$.)}\\
	&= D_{\psi_{\mu_0}}(0, \theta_1 - \theta_0) \\
	&= D_{\psi_{\mu_0}^*} \left(\psi^{\prime}_{\mu_0}(\theta_1 - \theta_0), \mu_0\right) \\
	& =  D_{\psi_{\mu_0}^*} (\mu_1, \mu_0),
\end{align*}
where the last equality comes from the fact that $\psi_{\mu_0}^\prime (\theta_1 - \theta_0) = B^\prime(\theta_1) = \mu_1$. The second-to-last equality stems instead from the duality of the Bregman divergence, which we state below.
\begin{fact}
	Let $f : \Lambda \rightarrow \mathbb{R}$ be a strictly convex and continuously differentiable on a open interval $\Lambda \subset \mathbb{R}$. For any $\theta_1, \theta_0 \in \Lambda$, let $\mu_1, \mu_2$ be the corresponding dual points satisfying $f'(\theta_j) = \mu_j$ for $j = 0, 1$. Then, we have
	\begin{equation}
		D_f(\theta_0, \theta_1) = D_{f^*}(\mu_1, \mu_0),
	\end{equation}
	where $f^*$ is the convex conjugate of $f$.
\end{fact}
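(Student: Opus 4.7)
The plan is to unfold the definition of $D_{f^*}(\mu_1,\mu_0)$ and simplify it directly, using two classical properties of the Legendre--Fenchel conjugate $f^*$ of a strictly convex, continuously differentiable $f$ on an open interval: (i) the attainment identity $f^*(\mu) = \theta\mu - f(\theta)$ whenever $\mu = f'(\theta)$, and (ii) the inversion rule $(f^*)'(\mu) = \theta$ for the same dual pair $(\theta,\mu)$. Both follow from the fact that, under the hypotheses, $\theta \mapsto \theta\mu - f(\theta)$ is strictly concave and attains its supremum at the unique $\theta$ with $f'(\theta) = \mu$, together with the envelope theorem (or, equivalently, the fact that the strictly monotone $f'$ is a $C^0$-homeomorphism onto its image, so its inverse is $(f^*)'$).

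First I would invoke (i) to write $f^*(\mu_j) = \theta_j\mu_j - f(\theta_j)$ for $j=0,1$, and (ii) to replace $(f^*)'(\mu_0)$ by $\theta_0$. Substituting these into the definition
\[
D_{f^*}(\mu_1,\mu_0) \;=\; f^*(\mu_1) - f^*(\mu_0) - (f^*)'(\mu_0)\,(\mu_1-\mu_0)
\]
and cancelling the $\theta_0\mu_0$ terms yields
\[
D_{f^*}(\mu_1,\mu_0) \;=\; f(\theta_0) - f(\theta_1) + \mu_1(\theta_1-\theta_0).
\]
Then I would use $\mu_1 = f'(\theta_1)$ to recognise the right-hand side as
\[
f(\theta_0) - f(\theta_1) - f'(\theta_1)(\theta_0-\theta_1) \;=\; D_f(\theta_0,\theta_1),
\]
which is the claimed identity.

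The whole argument is essentially a one-line algebraic manipulation once the auxiliary properties (i) and (ii) are in hand, so there is no real obstacle. The only mildly delicate point worth remarking on is the justification of $(f^*)'(\mu_0) = \theta_0$: under strict convexity and $C^1$-smoothness, $f'$ is a strictly increasing continuous bijection onto its range, and a standard convex-analytic calculation (or the implicit function theorem applied to the stationarity condition $f'(\theta) = \mu$) gives the inverse-derivative relation. I would state this as a textbook fact rather than prove it from scratch, since it is the usual Legendre-transform folklore.
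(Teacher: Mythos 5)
Your proof is correct. The paper states this Bregman-duality identity as a known fact without supplying a proof, so there is nothing to compare against; your derivation — unfolding $D_{f^*}(\mu_1,\mu_0)$ via the Fenchel--Young attainment identity $f^*(\mu_j)=\theta_j\mu_j-f(\theta_j)$ and the inversion rule $(f^*)'(\mu_0)=\theta_0$ (both valid here since $f'$ is a strictly increasing continuous bijection onto its range) — is the standard argument and the algebra checks out.
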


Therefore, we have the first claimed equality, $\ell_{KL}(\mu_1, \mu_0)   = D_{\psi_{\mu_0}^*} (\mu_1, \mu_0)$. To show the second one, first note that $\psi^{\prime}_{\mu_0}(0) = B^\prime(\theta_0) = \mu_0$, so that $\psi_{\mu_0}^*(\mu_0) = \psi_{\mu_0}^{*\prime}(\mu_0) =0$. Therefore, we have that
\[
D_{\psi_{\mu_0}^*} (\mu_1, \mu_0) = \psi_{\mu_0}^* (\mu_1) - \psi_{\mu_0}^* (\mu_0) - \psi_{\mu_0}^{*\prime}(\mu_0) \left(\mu_1  - \mu_0\right) = \psi_{\mu_0}^* (\mu_1),
\]
which verifies the second claimed equality. The last equality can be established as follows:
\begin{align*}
	\psi_{\mu_0}^* (\mu_1) &= \sup_{\lambda} \lambda \mu_1 - \psi_{\mu_0}(\lambda) \\
	&=\sup_{\lambda} \lambda \mu_1  - \left[\lambda\mu_0 + \psi(\lambda) \right]\\
	&= \sup_{\lambda} \lambda \left(\mu_1 -\mu_0\right) -\psi(\lambda) \\
	& = 	\psi^* (\mu_1- \mu_0), 
\end{align*}
as desired.

\subsection{Proof of Proposition~\ref{prop::minimax_optimality}} \label{Append::subSec::minimax_optimal}
By following similar arguments to the ones used in the proof of Lemma~\ref{lemma::deviation_ineq}, we first show that, for any $\delta >0$, the following deviation inequality holds.
\begin{equation}\label{eq::deterministic_bound}
	\mathbb{P}\left( D_{\psi_{\mu}^*}(\hat{\mu}(n), \mu)  \geq \delta \right) \leq 2 e^{-n \delta}.	    
\end{equation}
\begin{proof}[Proof of inequality~\eqref{eq::deterministic_bound}]
	For any $\epsilon \geq 0$ and $\lambda \in [0,\lambda_{\max}  ) \subset \Lambda $, we have
	\begin{align*}
		\mathbb{P} \left(  S_k(n) /n  -\mu_k \geq \epsilon \right) 
		& = \mathbb{P}\left(  S_k(n) - n\mu_k \geq n \epsilon \right) \\
		&=  \mathbb{P}\left( e^{\lambda \left( S_k(n) - n \mu_k \right)} \geq  e^{\lambda n \epsilon} \right)\\
		& \leq e^{-\lambda n \epsilon}\mathbb{E}\left[e^{\lambda \left( S_k(n) - n \mu_k \right)} \right],
	\end{align*}
	where in the final step we have used Markov's inequality.  The last term can be  bounded as follows:
	\begin{align*}
		e^{-\lambda n \epsilon}\mathbb{E}\left[e^{\lambda \left( S_k(n) - n \mu_k \right)} \right] &=  e^{-\lambda n \epsilon}\prod_{i=1}^n\mathbb{E}\left[e^{\lambda \left( X_i -\mu_k \right)} \right] \\
		&= e^{-\lambda n \epsilon}\prod_{i=1}^n\mathbb{E}\left[e^{\psi(\lambda)} \right] \\
		& = e^{n\left(\psi(\lambda)  - \lambda \epsilon\right)}.
	\end{align*}
	Since the bound holds for any $\lambda \in [0,\lambda_{\max})$, we have the  following intermediate bound on the deviation probability:
	\begin{equation}
		\mathbb{P} \left(  S_k(n) / n  -\mu_k \geq \epsilon \right)  \leq \inf_{\lambda \in [0, \lambda_{\max})} e^{n\left(\psi(\lambda)  - \lambda \epsilon\right)}.
	\end{equation}
	Since $\epsilon \geq 0$, the convex conjugate of $\psi$ at $\epsilon$ can be written as
	\[
	\psi^*(\epsilon) = \sup_{\lambda\in \Lambda} \left\{\lambda \epsilon-  \psi(\lambda)\right\} = \sup_{\lambda\in [0, \lambda_{\max})} \left\{\lambda \epsilon-  \psi(\lambda)\right\}.
	\]
	Using this identity, the deviation probability can be further bounded as
	\begin{align*}
		\mathbb{P} \left(  S_k(n) / n  -\mu_k \geq \epsilon \right) 
		&\leq  \inf_{\lambda \in [0, \lambda_{\max})} e^{n\left(\psi(\lambda)  - \lambda \epsilon\right)}\\
		&= \exp\left(-n\sup_{\lambda \in [0, \lambda_{\max})} \left\{\lambda\epsilon - \psi(\lambda)\right\} \right) \\ 
		&=e^{-n \psi^*(\epsilon)}
	\end{align*}
	Using the same argument, it also follows that 
	\[
	\mathbb{P} \left(  S_k(n) / n  -\mu_k \leq -\epsilon \right) \leq  e^{-n \psi^*(-\epsilon)}
	\]	
	Since $\psi^*$ is a non-negative convex function with $\psi^*(0) = 0$, for any $\delta \geq0$, there exist $\epsilon_{1}, \epsilon_{2} \geq 0$ with $\psi^*(\epsilon_{1}) = \psi^*(- \epsilon_{2})  = \delta$ such that 
	\[
	\left\{z \in \mathbb{R} : \psi^*(z) \geq \delta  \right\} = \left\{z \in \mathbb{R} :  z  \geq \mu_k + \epsilon_{1}, z \leq \mu_k-\epsilon_{2}  \right\}.
	\]
	Therefore, for any $\delta \geq 0$, we conclude that
	\begin{align*}
		\mathbb{P}\left(D_{\psi^*_{\mu_k}}(\hat{\mu}_k(n) , \mu_k) \geq \delta \right) 
		&= \mathbb{P}\left(\psi^*_{\mu_k}(S_k(n) / n) \geq \delta\right) ~~\text{(By the equality~\eqref{eq::KL_equiv_psi_star} in Fact~\ref{fact::KL_equiv_psi_star}.)} \\
		& \leq   \mathbb{P}\left(S_k(n) / n)   - \mu_k\geq  \epsilon_{1} \right)  + \mathbb{P}\left(S_k(n) / n)  - \mu_k\leq - \epsilon_2 \right)  \\
		&\leq 2 e^{-n \delta},
	\end{align*}
	as desired.
\end{proof}
Now, we return to the proof of Proposition~\ref{prop::minimax_optimality}. Based on the deviation inequality~\eqref{eq::deterministic_bound}, the risk under the non-adaptive setting can be bounded as
\begin{align*}
	\mathbb{E}_{\theta} D_{\psi_{\mu}^*}(\hat{\mu}(n), \mu)
	& = \int_{0}^{\infty} 	\mathbb{P}\left( D_{\psi_{\mu}^*}(\hat{\mu}(n), \mu)  \geq \delta \right)  \mathrm{d} \delta \\
	&\leq 2\int_{0}^{\infty} e^{-n\delta} \mathrm{d} \delta \\
	& = \frac{2}{n},
\end{align*}
which completes the proof of the upper bound. 
To get a lower bound on the minimax risk, we use the following lemma.
\begin{lemma}[Modified version of Theorem 2.2 in \cite{Tsybakov:2008}] \label{lemma::LeCam_local_tri}
	Let $\left\{ P_\theta : \theta \in \Theta \right\}$ be a family of probability measures parameterized by $\theta \in \Theta$ and let $s >0$. Suppose a loss function $l : \Theta \times \Theta \mapsto [0, \infty)$ satisfies the local triangle inequality condition with positive numbers $M \leq 1$ and $\epsilon_0$. Also assume that there exist $\theta_1, \theta_0 \in \Theta$ such that  $\ell(\theta_1, \theta_0) \geq 2s / M$ for some $s \leq \epsilon_0$.  Then,  if $D_{KL}(P_{\theta_1} \| P_{\theta_0}) \leq \alpha < \infty$, we have 
	\begin{equation}
        		\inf_{\hat{\theta}}\sup_{\theta \in \Theta} \mathbb{P}_{\theta} \left( \ell(\hat{\theta}, \theta) \geq s \right) \geq \frac{1}{4} \exp(-\alpha).
	\end{equation}
\end{lemma}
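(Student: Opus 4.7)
The plan is to adapt the classical two-point Le Cam argument, using the Bretagnolle--Huber inequality in place of Pinsker's to obtain the exponential dependence on $\alpha$, and invoking the local triangle inequality condition to convert a testing error between $\theta_0$ and $\theta_1$ into an estimation error of size $s$. First, I would reduce the minimax problem to the two-point subfamily $\{\theta_0,\theta_1\}$ by noting that the supremum over $\Theta$ dominates the maximum over these two points, which in turn dominates one-half of their sum; thus it suffices to lower bound $\mathbb{P}_{\theta_0}(\ell(\hat{\theta},\theta_0) \geq s) + \mathbb{P}_{\theta_1}(\ell(\hat{\theta},\theta_1) \geq s)$ uniformly in $\hat{\theta}$.

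Next, I would introduce the nearest-neighbor test $\psi := \arg\min_{i \in \{0,1\}} \ell(\hat{\theta},\theta_i)$ (with an arbitrary tie-break) and use the local triangle inequality to relate its testing errors to the estimation error events. Concretely, on the subevent where both $\ell(\hat{\theta},\theta_0), \ell(\hat{\theta},\theta_1) \leq \epsilon_0$, the hypothesis $\ell(\theta_0,\theta_1) \geq 2s$ combined with the local triangle inequality yields
\[
\ell(\hat{\theta},\theta_0) + \ell(\hat{\theta},\theta_1) \geq M\,\ell(\theta_0,\theta_1) \geq 2Ms,
\]
so the maximum of the two losses is at least $Ms$. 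Since $s \leq \epsilon_0$, in the complementary subevent at least one of the losses already exceeds $\epsilon_0 \geq s \geq Ms$, so the same conclusion holds unconditionally. Because $\psi$ picks the smaller loss, a testing error under $P_{\theta_i}$ then forces $\ell(\hat{\theta},\theta_i) \geq Ms$, giving the key inequality $\mathbb{P}_{\theta_i}(\psi = 1-i) \leq \mathbb{P}_{\theta_i}(\ell(\hat{\theta},\theta_i) \geq Ms)$.

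Finally, I would apply the Bretagnolle--Huber inequality
\[
\mathbb{P}_{\theta_0}(\psi = 1) + \mathbb{P}_{\theta_1}(\psi = 0) \geq \frac{1}{2}\exp\bigl(-D_{KL}(P_{\theta_1}\|P_{\theta_0})\bigr) \geq \frac{1}{2}\,e^{-\alpha}
\]
and chain it with the two-point reduction to conclude. The main obstacle is that the argument as sketched naturally produces $\inf_{\hat{\theta}}\sup_\theta \mathbb{P}_\theta(\ell(\hat{\theta},\theta) \geq Ms) \geq \frac{1}{4}e^{-\alpha}$, with the factor $M$ appearing in the \emph{threshold} rather than multiplying the probability. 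To arrive at the form of the bound stated in the lemma, one must transfer this loss from the threshold into a multiplicative factor $M$ in the final probability bound; I expect this transfer---which must use both the hypothesis $s \leq \epsilon_0$ and the bounded nature of the loss on the relevant subevent---to be the most delicate step, and would likely require either a slightly sharper Bretagnolle--Huber step or a direct comparison argument between the thresholds $Ms$ and $s$.
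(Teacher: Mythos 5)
Your skeleton --- two-point reduction, nearest-neighbor test $\psi=\arg\min_i\ell(\hat\theta,\theta_i)$, local triangle inequality to convert testing error into estimation error, Bretagnolle--Huber for the exponential dependence on $\alpha$ --- is exactly the standard proof of Theorem 2.2 in Tsybakov, which is all the paper itself offers (the lemma is imported without proof, so there is no in-paper argument to compare against). The obstacle you flag at the end, however, is not a delicate step you failed to find: it is a genuine defect of the statement as written. The local triangle inequality only gives $\ell(\hat\theta,\theta_0)+\ell(\hat\theta,\theta_1)\geq 2Ms$, hence $\max_i\ell(\hat\theta,\theta_i)\geq Ms$, and your argument correctly yields $\inf_{\hat\theta}\sup_\theta\mathbb{P}_\theta(\ell(\hat\theta,\theta)\geq Ms)\geq\frac{1}{4}e^{-\alpha}$. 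The transfer of $M$ from the threshold into the probability is impossible in general: take $\Theta\ni\theta_0,\theta_1$ with a symmetric loss satisfying $\ell(\theta_0,\theta_1)=2s$ and a third point $\theta^*$ with $\ell(\theta^*,\theta_0)=\ell(\theta^*,\theta_1)=Ms$. One checks that every triple satisfies the local triangle inequality with constant $M$, yet the constant estimator $\hat\theta\equiv\theta^*$ gives $\sup_\theta\mathbb{P}_\theta(\ell(\hat\theta,\theta)\geq s)=0$ whenever $M<1$, contradicting the displayed bound. So the lemma should be read (and proved) with the event $\{\ell(\hat\theta,\theta)\geq Ms\}$ and constant $\frac14$; do not spend effort trying to prove the literal statement.

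This correction is harmless downstream: in the proof of Proposition~\ref{prop::minimax_optimality} the lemma is immediately combined with Markov's inequality, and $Ms\cdot\frac14 e^{-\alpha}=s\cdot\frac{M}{4}e^{-\alpha}$, so the final lower bound $\frac{M\log 2}{16n}$ is unchanged. Two smaller points to tighten in your write-up. First, the paper's local triangle inequality shares the \emph{second} argument ($d(\mu_1,\mu_0)+d(\mu_2,\mu_0)\geq M\max\{d(\mu_1,\mu_2),d(\mu_2,\mu_1)\}$), whereas you apply it with $\hat\theta$ in the \emph{first} slot of both loss terms; since the relevant Bregman-type losses need not be symmetric, you should either instantiate the condition with $\mu_0=\hat\theta$ (obtaining a bound on $\ell(\theta_i,\hat\theta)$) or note explicitly where symmetry is used. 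Second, your handling of the complementary subevent is fine but should be phrased as: if $\psi=1-i$ then $\ell(\hat\theta,\theta_i)=\max_j\ell(\hat\theta,\theta_j)\geq Ms$ in both cases, which is the inclusion $\{\psi=1-i\}\subseteq\{\ell(\hat\theta,\theta_i)\geq Ms\}$ actually needed before averaging the two error probabilities.
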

\Cref{lemma::LeCam_local_tri} directly comes from Theorem 2.2 in \cite{Tsybakov:2008} except the fact that the loss function $l$ does not necessarily satisfy triangle inequality but it satisfies the local triangle inequality. This lemma can be also viewed as a simplified version of Theorem 1 in \cite{yang1999information} where the idea of deploying local triangle inequality into the minimax lower bound proof comes from. For the completeness, we prove the following inequality which demonstrates how the local triangle inequality can be integrated the original proof of Theorem 2.2 in \cite{Tsybakov:2008}.
\begin{equation}
    	\inf_{\hat{\theta}}\sup_{\theta \in \Theta} \mathbb{P}_{\theta} \left( \ell(\hat{\theta}, \theta) \geq s \right) \geq 	\inf_{\psi}\max_{j \in \{0,1\}} \mathbb{P}_{\theta_j} \left( \psi \neq j \right),
\end{equation}
under the condition of \cref{lemma::LeCam_local_tri}.
\begin{proof}
    For any estimator $\hat{\theta}$, let $\psi^*$ be the minimum distance test defined by
\begin{equation}
    \psi^* := \argmin_{j \in \{0,1\}}\ell(\hat{\theta}, \theta_j).
\end{equation}
To prove the claimed inequality, it is enough to show that if $\psi^* \neq j$ then $\ell(\hat{\theta}, \theta_j) \geq s $ for each $j = 0, 1$. First consider the case $j = 0$. In this case, the condition $\psi^* \neq 0$ implies  that $\ell(\hat{\theta}, \theta_0) \geq \ell(\hat{\theta}, \theta_1)$. For the sake of deriving a contradiction, assume $\ell(\hat{\theta}, \theta_0) < s$, which also implies $\ell(\hat{\theta}, \theta_1) < s$ Then, from the local triangle inequality condition, we have 
\begin{align*}
    2s \leq M \ell(\theta_1, \theta_0) &\leq \ell(\hat{\theta}, \theta_0) + \ell(\hat{\theta}, \theta_1) \\
    & \leq 2 \ell(\hat{\theta}, \theta_0),
\end{align*}
which leads to a contradiction. By changing roles of $\theta_0$ and $\theta_1$, we can prove the case $j = 1$, and thus the claimed inequality, as desired.
\end{proof}
To finish the proof of the lower bound part in \cref{prop::minimax_optimality},

note that, for any $\theta_1, \theta_0 \in \Theta$, the KL divergence can be written as
\[
D_{KL}(P_{\theta_1}^n || P_{\theta_0}^n) =n  D_{KL}(\theta_1 || \theta_0) = n D_{\psi_{\mu_0}^*}(\mu_1, \mu_0), 
\]
where $\mu_1$ and $\mu_0$ are corresponding mean parameters. If $n$ is large enough such that $\sqrt{\frac{M\log2}{2n}} \leq \epsilon_0$, we can always find $\theta_1, \theta_0 \in \Theta$ such that $D_{\psi_{\mu_0}^*}(\mu_1, \mu_0)  = \frac{M\log 2}{n}$.  Then, the condition in Lemma~\ref{lemma::LeCam_local_tri} can be satisfied with $\ell = \sqrt{D_{\psi_{\mu}^*}}$, $s = \sqrt{\frac{M\log 2}{2n}}$ and $\alpha = \log 2$. Therefore,
\begin{align*}
	\inf_{\hat{\mu}}\sup_{\mu} \mathbb{E}_{\theta} D_{\psi_{\mu}^*}(\hat{\mu}, \mu) 
	& \geq s^2	\inf_{\hat{\mu}}\sup_{\mu} \mathbb{P}\left(  \sqrt{D_{\psi_{\mu}^*}(\hat{\mu}, \mu)} \geq s \right) \\
	& \geq  \frac{s^2}{4} \exp(-\alpha)\\
	&= \frac{M\log 2}{16 n},
\end{align*}
as desired.

\section{Equivalence between \texorpdfstring{\MakeLowercase{$n_{t}^{\eff} \to \infty$}}{n eff go to infinity} and $N$\texorpdfstring{\MakeLowercase{$(t) \stackrel{a.s.}{\to} \infty$}}{N(t) go to infinity}}

Before we state and formally prove our claim, we first state a useful fact.

\begin{fact}[Theorem 13.7 in \cite{williams1991probability} with $X=0$]
	Let $\{X_t\}_{t \in \mathbb{N}}$ be a sequence of random variables with finite first moments. Then $\mathbb{E}\left|X_t\right|\to 0$ if and only if the following conditions are satisfied:
	\begin{enumerate}
		\item $X_t \overset{p}{\to} 0$.
		\item $\{X_t\}_{t \in \mathbb{N}}$ is uniformly integrable.
	\end{enumerate}
\end{fact}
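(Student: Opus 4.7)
The plan is to prove the two directions separately, using only Markov's inequality, a splitting argument, and the definition of uniform integrability (UI).

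\medskip
\noindent\textbf{Forward direction ($L^1 \Rightarrow$ probability $+$ UI).} Suppose $\mathbb{E}|X_t| \to 0$. Convergence in probability is immediate from Markov's inequality: for any $\epsilon > 0$,
\[
\mathbb{P}(|X_t| > \epsilon) \;\leq\; \frac{\mathbb{E}|X_t|}{\epsilon} \;\longrightarrow\; 0.
\]
For uniform integrability, I would fix $\epsilon > 0$ and show that $\sup_t \mathbb{E}[|X_t|\mathbf{1}\{|X_t| > K\}] \leq \epsilon$ for $K$ sufficiently large. Choose $T$ so that $\mathbb{E}|X_t| < \epsilon$ for all $t > T$; then the tail integral is trivially bounded by $\epsilon$ for these indices. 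For the finitely many indices $t \leq T$, each $X_t$ is integrable, so dominated convergence gives $\mathbb{E}[|X_t|\mathbf{1}\{|X_t| > K\}] \to 0$ as $K \to \infty$, and taking the maximum over the finite set yields a $K$ that works uniformly.

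\medskip
\noindent\textbf{Reverse direction (probability $+$ UI $\Rightarrow L^1$).} Suppose $X_t \overset{p}{\to} 0$ and $\{X_t\}$ is UI. Fix $\epsilon > 0$ and use the three-way decomposition
\[
\mathbb{E}|X_t| \;=\; \mathbb{E}\bigl[|X_t|\mathbf{1}\{|X_t|\leq\epsilon\}\bigr] + \mathbb{E}\bigl[|X_t|\mathbf{1}\{\epsilon<|X_t|\leq K\}\bigr] + \mathbb{E}\bigl[|X_t|\mathbf{1}\{|X_t|>K\}\bigr].
\]
The first term is bounded by $\epsilon$. By UI, pick $K$ large enough that the third term is at most $\epsilon$ uniformly in $t$. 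The middle term is bounded by $K \cdot \mathbb{P}(|X_t| > \epsilon)$, which tends to $0$ by the assumed convergence in probability, so it is less than $\epsilon$ for all sufficiently large $t$. Hence $\limsup_t \mathbb{E}|X_t| \leq 3\epsilon$, and since $\epsilon$ is arbitrary, $\mathbb{E}|X_t| \to 0$.

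\medskip
\noindent\textbf{Anticipated difficulty.} There is no real obstacle; the only subtle point is handling the ``initial'' indices in the forward direction, where one must not forget that UI is a statement requiring a single $K$ to work for \emph{all} $t$ simultaneously. The splitting arguments are standard, and the entire proof is a short two-sided application of Markov's inequality and the definition of UI.
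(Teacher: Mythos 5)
Your proof is correct. Note that the paper does not prove this statement at all---it is stated as a Fact and attributed to Theorem 13.7 of Williams' \emph{Probability with Martingales}, so there is no in-paper argument to compare against; your two-sided argument (Markov's inequality for convergence in probability, a finite-versus-tail index split for uniform integrability, and the three-way truncation $\{|X_t|\leq\epsilon\}$, $\{\epsilon<|X_t|\leq K\}$, $\{|X_t|>K\}$ for the converse) is precisely the standard textbook proof of that theorem specialized to $X=0$, and every step, including the subtle point about choosing a single $K$ uniformly over the finitely many initial indices, is handled correctly.
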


\noindent Recall that $n_{t}^\eff = [1/N(t)]^{-1}$, we are now in place to prove the following claim.

\begin{proposition} \label{prop::equiv_n_eff_in_prob}
	As long as $N(t) \geq 1$,  we have that $n_{t}^\eff \to \infty$ as $t \to \infty$ if and only if $N(t) \stackrel{p}{\to} \infty$ $t \to \infty$.
\end{proposition}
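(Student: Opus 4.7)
The plan is to recognize this as a direct application of the uniform integrability fact stated just before the proposition. Setting $X_t := 1/N(t)$, the assumption $N(t) \geq b > 0$ gives the deterministic bound $0 \leq X_t \leq 1/b$, so the family $\{X_t\}_{t \in \mathbb{N}}$ is uniformly bounded and hence automatically uniformly integrable. This eliminates the uniform-integrability condition in the cited fact, leaving only the equivalence between convergence in $L^1$ and convergence in probability.

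Next, I would unpack the two statements in the proposition into statements about $X_t$. By definition of $n_t^\eff$, we have $n_t^\eff \to \infty$ if and only if $\mathbb{E}[X_t] \to 0$, which (since $X_t \geq 0$) is the same as $\mathbb{E}|X_t| \to 0$. On the other side, $N(t) \stackrel{p}{\to} \infty$ means that for every $M > 0$, $\mathbb{P}(N(t) \leq M) \to 0$, and this is equivalent to $\mathbb{P}(X_t \geq 1/M) \to 0$ for every $M > 0$, i.e., $X_t \stackrel{p}{\to} 0$.

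Combining these reductions, the proposition becomes the assertion that for the nonnegative, uniformly bounded sequence $\{X_t\}$, convergence in $L^1$ to zero is equivalent to convergence in probability to zero. The forward direction follows from Markov's inequality and the reverse direction is exactly the cited fact applied with the uniform-integrability hypothesis verified. No additional obstacles arise; the only subtlety worth stating explicitly is that the assumption $N(t) \geq b$ is what makes uniform integrability automatic, since without a lower bound on $N(t)$ the equivalence could fail (one could have $X_t$ converging in probability to $0$ while mass escapes to large values of $X_t$). I would conclude by noting that an analogous statement holds for almost sure convergence via the monotone/dominated convergence theorem if $N(t)$ is additionally monotone, but the proposition as stated only concerns convergence in probability.
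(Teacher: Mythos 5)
Your proposal is correct and follows essentially the same route as the paper: substitute $X_t = 1/N(t)$ into the cited Williams fact, observe that $N(t)\geq b>0$ makes $\{1/N(t)\}$ uniformly bounded and hence uniformly integrable, and translate $n_t^{\eff}\to\infty$ and $N(t)\stackrel{p}{\to}\infty$ into $L^1$- and in-probability convergence of $X_t$ to zero, respectively. The only (harmless) cosmetic difference is that you invoke Markov's inequality for one direction, whereas the paper obtains both directions at once from the stated equivalence.
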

\begin{proof}
	The assumption of $N(t) \geq 1$ ensures that the sequence $\{1/N(t)\}_{t \in \mathbb{N}}$ is uniformly integrable. Substituting $X_t = 1/N(t)$ into the aforementioned fact, we have that
	\begin{align*}
		n_{t}^\eff \to \infty ~~\text{as}~~t \to \infty
		~&\Leftrightarrow~ \mathbb{E}\left[1/N(t)\right] \to 0 ~~\text{as}~~t \to \infty\\
		~&\stackrel{\text{fact}}{\Leftrightarrow}~ 1/N(t) \overset{p}{\to} 0~~\text{as}~~t \to \infty \\
		~&\Leftrightarrow~ N(t) \overset{p}{\to} \infty~~\text{as}~~t \to \infty,
	\end{align*}
	as desired.
\end{proof}

\begin{proposition}
	If $\{N(t)\}$ is a nondecreasing sequence, we have that $N(t) \overset{p}{\to} \infty$ as $t \to \infty$ implies $N(t) \overset{a.s.}{\to} \infty$ as $t \to \infty$.
	\begin{proof}
		If $N(t) \overset{p}{\to} \infty$ as $t \to \infty$, there exists a subsequence that goes to $\infty$ almost surely. Therefore, we must have $\limsup\limits_{t \to \infty} N(t) = \infty$ almost surely. By the monotonicity of $\{N(t)\}$, we have $\limsup\limits_{t \to \infty} N(t) = \lim\limits_{t \to \infty} N(t)$ which implies that $N(t) \overset{a.s.}{\to} \infty$ as $t \to \infty$.
	\end{proof}
\end{proposition}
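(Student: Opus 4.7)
The plan is to leverage the monotonicity of $\{N(t)\}$ to first establish that the limit $L := \lim_{t \to \infty} N(t)$ exists almost surely as an element of $[0, \infty]$ (taking value $+\infty$ when the sequence diverges), and then to show that $L = +\infty$ almost surely by pushing the convergence in probability hypothesis through continuity of measure. Since $t$ ranges over $\mathbb{N}$ and $\{N(t)\}$ is nondecreasing, the pointwise limit $L(\omega) = \sup_t N(t)(\omega)$ exists for every $\omega$, so no null-set qualifier is even needed at this step.

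Next, I would fix an arbitrary $M > 0$ and analyze the event $\{L \leq M\}$. Monotonicity gives $\{L \leq M\} = \bigcap_{t \geq 1} \{N(t) \leq M\}$, and the intersected sets are nested decreasing in $t$. By continuity of probability measures from above,
\begin{equation*}
\mathbb{P}(L \leq M) \;=\; \lim_{t \to \infty} \mathbb{P}\bigl(N(t) \leq M\bigr).
\end{equation*}
The hypothesis $N(t) \overset{p}{\to} \infty$ means exactly that $\mathbb{P}(N(t) \leq M) \to 0$ for every fixed $M > 0$, so $\mathbb{P}(L \leq M) = 0$. Finally, taking $M$ over the positive integers and using countable subadditivity,
\begin{equation*}
\mathbb{P}(L < \infty) \;=\; \mathbb{P}\Bigl(\bigcup_{M \in \mathbb{N}} \{L \leq M\}\Bigr) \;\leq\; \sum_{M \in \mathbb{N}} \mathbb{P}(L \leq M) \;=\; 0,
\end{equation*}
which gives $L = +\infty$ almost surely, i.e.\ $N(t) \overset{a.s.}{\to} \infty$.

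There is no real obstacle here; the argument is three routine steps (monotonicity gives pointwise existence of the limit, continuity from above converts in-probability control into a limiting bound, and a countable union over $M$ finishes). An equally clean alternative would be to extract an almost surely convergent subsequence $N(t_k) \to \infty$ from the convergence-in-probability assumption and then invoke monotonicity to sandwich $\liminf_t N(t) \geq \lim_k N(t_k) = \infty$; I would mention this as a one-line alternative but prefer the continuity-of-measure route because it avoids appealing to the subsequence principle and makes the role of monotonicity completely explicit.
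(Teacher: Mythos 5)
Your proof is correct, and it takes a genuinely different route from the paper's. The paper extracts an almost surely convergent subsequence from the convergence-in-probability hypothesis (the route you mention only as an aside), concludes $\limsup_t N(t)=\infty$ a.s., and then uses monotonicity to identify $\limsup$ with $\lim$. You instead argue directly: monotonicity gives the pointwise limit $L=\sup_t N(t)$, the identity $\{L\le M\}=\bigcap_t\{N(t)\le M\}$ together with continuity of measure from above converts $\mathbb{P}(N(t)\le M)\to 0$ into $\mathbb{P}(L\le M)=0$, and a countable union over $M\in\mathbb{N}$ finishes. Each step checks out, including the nesting of the events $\{N(t)\le M\}$ needed for continuity from above. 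Your argument is the more self-contained of the two: it avoids invoking the subsequence principle (which, for convergence in probability to $+\infty$, itself requires a small adaptation of the usual statement plus a Borel--Cantelli step), at the cost of being slightly longer than the paper's three-line appeal to a standard theorem. Both are complete and correct.
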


Returning to the MABs setting, the previous propositions show that, as long as $N_k(t) \geq b > 0$, the condition $n_{k,t}^\eff \to \infty$ implies that $N_k(t) \overset{a.s.}{\to} \infty$ since $\{N_k(t)\}$ is monotone  for each arm $k \in [\K]$.  For a sequence of chosen arms, however, if the sequence  $\{N_{\kappa_t}(T_t)\}$  is not monotone, $n_{\kappa_{t}, t}^\eff \to \infty$ does not imply $N_{\kappa_t}(\tau_t) \overset{a.s.}{\to} \infty$ as shown in the next example.
\begin{example}
	Consider a two-armed bandit; pull the first arm at time 1 and the second arm forever after. Thus, $N_1(t) = 1$ for all $t \geq 1$, and $N_2(t) = t-1$ for all $t \geq 2$ with $N_2(1)= 0$. 
	Define $\tau_t = t+1$ and let $\{\kappa_t\}$ be a sequence of random choice functions defined by a uniform random variable $U \in Unif[0,1]$ such that $\kappa_t = 1$ if $ U \in \left[ \frac{j}{2^k}, \frac{j+1}{2^k}\right]$ where $k$ and $j$ are given by $k = \left\lfloor \log_2(t) \right\rfloor$ and $t = 2^k + j$. if $ U \notin \left[ \frac{j}{2^k}, \frac{j+1}{2^k}\right]$, define $\kappa_t = 2$. It is clear $N_{\kappa_t}(\tau_t) \overset{p}{\to} \infty$, and the Proposition~\ref{prop::equiv_n_eff_in_prob} implies that  $n_{\kappa_{t},t}^\eff \to \infty$. However, for any given $U$, $N_{\kappa_t}(\tau_t) \not\to \infty$ and thus $\mathbb{P}\left(N_{\kappa_t}(\tau_t) \to \infty\right) = 0$. 
\end{example}

\section{Alternative bounds using self-normalized process} 

For sub-Gaussian arms, it is known that $\mathbb{E}\left[\exp\left\{\lambda \left(S(\Tau)-\mu N(\Tau)\right) - \frac{\sigma^2\lambda^2}{2} N(\Tau) \right\}\right] \leq 1$ for all $\lambda\in\mathbb{R}$. In this sub-Gaussian case (only), one may use the following moment bound from the literature on  self-normalized processes. 
\begin{fact}[Theorem 2.1 in \cite{de2004self}]
	If $\mathbb{E}\sqrt{N(\Tau)} < \infty$,
	\begin{equation} \label{eq::self_normal}
		\mathbb{E}\exp\left\{ \tilde{N}^\mathrm{E}(\Tau) \frac{\left(\hat{\mu}(\Tau) - \mu)\right)^2}{4\sigma^2} \right\} \leq \sqrt{2},
	\end{equation}
	where $    \tilde{N}^\mathrm{E}(\Tau) := N^2(\Tau) / \left( N(\Tau) + \left(\mathbb{E}\sqrt{N(\Tau)}]\right)^2  \right).$
	\label{fact:delapena}
\end{fact}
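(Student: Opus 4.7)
The plan is to derive this self-normalized moment bound from the exponential supermartingale property of the centered sub-Gaussian process via the classical method of mixtures followed by a single Cauchy--Schwarz step. Introduce $M(t) := S(t) - \mu N(t)$ and $V(t) := \sigma^2 N(t)$, so that the sub-Gaussian assumption yields, for each fixed $\lambda \in \mathbb{R}$, that $L_t(\lambda) := \exp\{\lambda M(t) - \lambda^2 V(t)/2\}$ is a non-negative supermartingale with $L_0(\lambda) = 1$ (the argument is identical to the one in Claim~1 of Appendix~C); optional stopping, which is routine for non-negative supermartingales, then gives $\mathbb{E}[L_\Tau(\lambda)] \leq 1$. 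Now set $a := \sigma^2 \bigl(\mathbb{E}\sqrt{N(\Tau)}\bigr)^2$ and observe the algebraic identity
\begin{equation*}
\tilde{N}^{\mathrm{E}}(\Tau)\,\frac{(\hat\mu(\Tau)-\mu)^2}{4\sigma^2} \;=\; \frac{M(\Tau)^2}{4\bigl(V(\Tau)+a\bigr)},
\end{equation*}
so the target reduces to bounding $\mathbb{E}\exp\bigl\{M(\Tau)^2/[4(V(\Tau)+a)]\bigr\}$ by $\sqrt{2}$.

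To engineer a $V(\Tau)+a$ in the denominator, I would integrate $L_\Tau(\lambda)$ against the centered Gaussian density $\lambda \sim N(0, 1/a)$; Fubini is justified by non-negativity, and the Gaussian integral in $\lambda$ evaluates in closed form to give the mixture inequality
\begin{equation*}
\mathbb{E}\left[\sqrt{\tfrac{a}{V(\Tau)+a}}\,\exp\!\left\{\tfrac{M(\Tau)^2}{2(V(\Tau)+a)}\right\}\right] \;\leq\; 1.
\end{equation*}
The exponent inside has the right shape but with a factor of $2$ rather than $4$ in the denominator. To halve it I would apply Cauchy--Schwarz after splitting the weight $\sqrt{a/(V(\Tau)+a)}$ symmetrically between two copies, obtaining
\begin{equation*}
\mathbb{E}\exp\!\left\{\tfrac{M(\Tau)^2}{4(V(\Tau)+a)}\right\} \;\leq\; \sqrt{\mathbb{E}\!\left[\sqrt{\tfrac{a}{V(\Tau)+a}}\exp\!\left\{\tfrac{M(\Tau)^2}{2(V(\Tau)+a)}\right\}\right]}\,\cdot\, \sqrt{\mathbb{E}\sqrt{\tfrac{V(\Tau)+a}{a}}}.
\end{equation*}
The first radical is at most $1$ by the mixture step. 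For the second, subadditivity of $\sqrt{\cdot}$ gives $\sqrt{V(\Tau)+a}\leq\sqrt{V(\Tau)}+\sqrt{a}$, and the choice of $a$ yields $\mathbb{E}\sqrt{V(\Tau)} = \sigma\,\mathbb{E}\sqrt{N(\Tau)} = \sqrt{a}$, so $\mathbb{E}\sqrt{V(\Tau)+a}\leq 2\sqrt{a}$ and the second radical is at most $\sqrt{2}$. Multiplying out yields the desired constant $\sqrt{2}$.

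The main subtlety is the calibration of the mixing variance $1/a$: the value $a = \sigma^2(\mathbb{E}\sqrt{N(\Tau)})^2$ is exactly what is needed for the Cauchy--Schwarz step to close, since it forces the $\sqrt{a}$ in the numerator of the mixture weight and the $\sqrt{a}$ emerging from the subadditivity bound to cancel, so that no stray factor of $N(\Tau)$ leaks through and the normalization $\tilde{N}^{\mathrm{E}}$ comes out exactly as stated. The finiteness hypothesis $\mathbb{E}\sqrt{N(\Tau)}<\infty$ is used precisely to keep this prior variance strictly positive and finite; without it the Gaussian mixing argument breaks down. A secondary point worth checking carefully is the optional-stopping and Fubini exchange when $\Tau$ is unbounded, but both are standard given the pointwise non-negativity of $L_t(\lambda)$ and of the Gaussian mixing density.
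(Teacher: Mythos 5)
Your argument is correct. Note that the paper does not prove this statement at all: it is imported as a Fact, cited verbatim from Theorem~2.1 of \citet{de2004self}, and used as a black box in the appendix. Your derivation — the exponential supermartingale $L_t(\lambda)$ with optional stopping, the Gaussian mixture over $\lambda \sim N(0,1/a)$ with $a = \sigma^2(\mathbb{E}\sqrt{N(\Tau)})^2$ yielding $\mathbb{E}[\sqrt{a/(V(\Tau)+a)}\exp\{M(\Tau)^2/(2(V(\Tau)+a))\}]\leq 1$, and the final Cauchy--Schwarz split combined with $\mathbb{E}\sqrt{V(\Tau)+a}\leq \mathbb{E}\sqrt{V(\Tau)}+\sqrt{a}=2\sqrt{a}$ — is precisely the standard method-of-mixtures proof given in that reference, and all the steps (the algebraic identity reducing $\tilde{N}^{\mathrm{E}}$ to $M^2/[4(V+a)]$, the Fubini exchange, and the calibration of $a$) check out.
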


We can use the above fact and the Donsker-Varadhan representation to derive an alternative bound for the $l_2$ risk of the chosen sample mean at a stopping time $\Tau$ as follows: 
\begin{align*}
	I(\kappa ; \mathcal{D}) &= \sum_{k =  1}^{\K} \mathbb{P}(\kappa = k)  D_{KL}\left(\mathcal{L}\left(\mathcal{D} | {\kappa} = k\right) ||\mathcal{L}\left(\mathcal{D}\right) \right) \\
	& \geq  \sum_{k =  1}^{\K} \mathbb{P}(\kappa = k)  \mathbb{E}_{P_k} \left[f_k\right] - \log \mathbb{E}_{Q} \left[e^{f_k}\right]   \\	
	& = \sum_{k = 1}^{\K} \mathbb{P}(\kappa = k) \left\{ \mathbb{E} \left[ \tilde{N}_k^\mathrm{E}(\Tau) \frac{\left(\hat{\mu}_k(\Tau) - \mu_k)\right)^2}{4\sigma^2}    \mid \kappa = k \right] \right.\\
	&~~~~~~\left. - \log \mathbb{E}\left[ \exp\left\{ \tilde{N}_k^\mathrm{E}(\Tau) \frac{\left(\hat{\mu}(\Tau) - \mu)\right)^2}{4\sigma^2}\right\}\right]   \right\} \\
	& \geq \sum_{k = 1}^{\K} \mathbb{P}(\kappa = k)  \left\{\mathbb{E} \left[ \tilde{N}_k^\mathrm{E}(\Tau) \frac{\left(\hat{\mu}_k(\Tau) - \mu_k)\right)^2}{4\sigma^2}    \mid \kappa = k \right] - \frac{\log 2}{2} \right\}\\
	& =  \mathbb{E} \left[ \tilde{N}_\kappa^\mathrm{E}(\Tau) \frac{\left(\hat{\mu}_\kappa(\Tau) - \mu_\kappa)\right)^2}{4\sigma^2}  \right] - \frac{\log 2}{2}.
\end{align*}
By rearranging terms, we have the following bound on the $\ell_2$ risk.
\begin{equation}\label{eq::l2_bound_self_norm}
	\mathbb{E} \left[ \tilde{N}_\kappa^\mathrm{E}(\Tau) \left(\hat{\mu}_\kappa(\Tau) - \mu_\kappa)\right)^2 \right] \leq 4\sigma^2\left[I(\kappa ; \D_{\Tau})+ \frac{\log 2}{2}\right].
\end{equation}

Recall that, for sub-Gaussian arms, the bound in Theorem~\ref{thm::bounds_in_fully_adap} can be written as
\begin{equation} \label{eq::l2_bound_appen}
	\mathbb{E} \left[ \dbtilde{N}_\kappa(\tau) \left(\hat{\mu}_\kappa(\tau) - \mu_\kappa)\right)^2 \right] \leq 2C_b\sigma^2\left[I(\kappa ; \D_{\Tau})+ 1.25\right].
\end{equation}

Bounds \eqref{eq::l2_bound_self_norm} and \eqref{eq::l2_bound_appen} are matched to each other up to a constant factor. However, corresponding normalized $\ell_2$ risks in LHS shows interesting differences. First, the bound~\eqref{eq::l2_bound_self_norm} based on Fact~\ref{fact:delapena} holds only at a stopping time but our bound holds at an arbitrary random time. Second, the bound~\eqref{eq::l2_bound_self_norm} is applicable only to the sub-Gaussian case since it is non-trivial to extend the Fact~\ref{fact:delapena} to general sub-$\psi$ cases. Third, if the random sample size $N$ is highly concentrated at a constant, the normalizing factor $\tilde{N}_\kappa^{\mathrm{E}}$ tends to be larger than our normalizing factor $\dbtilde{N}_\kappa$ and thus the bound~\eqref{eq::l2_bound_self_norm} yields a tighter control on the $\ell_2$ risk. On the other hand, if the random sample size $N$ has a large variability, our normalizing factor $\dbtilde{N}_\kappa$ tends to be larger than $\tilde{N}_\kappa^{\mathrm{E}}$ since $(\mathbb{E}\sqrt{N})^2$ can be significantly larger than $N$ with a high probability. In this case, our bound~\eqref{eq::l2_bound_appen} yields a tighter control on the $\ell_2$ risk than the bound~\eqref{eq::l2_bound_self_norm}.

\end{appendices}
	
\end{document}